\documentclass[11pt]{article}
\textheight 620pt \textwidth 450pt
\oddsidemargin 2.5mm \evensidemargin 2.5mm
\topmargin 0mm
\usepackage{IEEEtrantools}
\usepackage{latexsym}
\usepackage{amssymb}
\usepackage{amsmath,amsfonts,amsthm}
\usepackage{mathrsfs}
\usepackage[all]{xy}
\usepackage{epic}
\usepackage{eepic}
\usepackage{enumerate}
\interdisplaylinepenalty=1000
\usepackage{multirow}
\usepackage[breaklinks]{hyperref}
\usepackage{tikz}

\usepackage{mathrsfs}
\usetikzlibrary{matrix,arrows,decorations.pathmorphing}
\usepackage{color}
\usepackage[breaklinks]{hyperref}
\usepackage{tikz-cd}

\usetikzlibrary{arrows,decorations.markings}
\usepackage{caption}
\usepackage[mathcal]{euscript}
\usetikzlibrary{fit,matrix}
\usepackage{authblk}
\usepackage{dsfont}

\newcommand{\bbbt}{\mathbb{T}}
\newcommand{\scrt}{\mathscr{T}}

\newcommand{\be}{\begin{equation}}
	\newcommand{\ee}{\end{equation}}
\newcommand{\bea}{\begin{eqnarray}}
	\newcommand{\eea}{\end{eqnarray}}
\newcommand{\bean}{\begin{eqnarray*}}
	\newcommand{\eean}{\end{eqnarray*}}
\newcommand{\brray}{\begin{array}}
	\newcommand{\erray}{\end{array}}
\newcommand{\biearray}{\begin{IEEEarray}{rCl}}
	\newcommand{\eiearray}{\end{IEEEarray}}

\newtheorem{dfn}{Definition}[section]
\newtheorem{thm}[dfn]{Theorem}
\newtheorem{lmma}[dfn]{Lemma}
\newtheorem{ppsn}[dfn]{Proposition}
\newtheorem{crlre}[dfn]{Corollary}
\newtheorem{xmpl}[dfn]{Example}
\newtheorem{rmrk}[dfn]{Remark}

\newcommand{\bdfn}{\begin{dfn}\rm}
	\newcommand{\bthm}{\begin{thm}}
		\newcommand{\blmma}{\begin{lmma}}
			\newcommand{\bppsn}{\begin{ppsn}}
				\newcommand{\bcrlre}{\begin{crlre}}
					\newcommand{\bxmpl}{\begin{xmpl}}
						\newcommand{\brmrk}{\begin{rmrk}\rm}
							
							\newcommand{\edfn}{\end{dfn}}
						\newcommand{\ethm}{\end{thm}}
					\newcommand{\elmma}{\end{lmma}}
				\newcommand{\eppsn}{\end{ppsn}}
			\newcommand{\ecrlre}{\end{crlre}}
		\newcommand{\exmpl}{\end{xmpl}}
	\newcommand{\ermrk}{\end{rmrk}}

\newcommand{\bbc}{\mathbb{C}}
\newcommand{\bbz}{\mathbb{Z}}
\newcommand{\bbn}{\mathbb{N}}
\newcommand{\bbr}{\mathbb{R}}
\newcommand{\bbq}{\mathbb{Q}}

\newcommand{\cla}{\mathcal{A}}

\newcommand{\clh}{\mathcal{H}}

\newcommand{\clk}{\mathcal{K}}
\newcommand{\cll}{\mathcal{L}}

\newcommand{\prf}{\noindent{\it Proof\/}: }

\newcommand{\one}{{1\!\!1}}

\newcommand{\id}{\mbox{id}}

\def \qed { \mbox{}\hfill
	$\Box$\vspace{1ex}}

\newcommand\restr[2]{{
		\left.\kern-\nulldelimiterspace 
		#1 
		\littletaller 
		\right|_{#2} 
}}

\newcommand{\littletaller}{\mathchoice{\vphantom{\big|}}{}{}{}}

\begin{document}

	\author{{\sc Shreema Subhash Bhatt, Surajit Biswas, Bipul Saurabh}}
\date{}
	\title{On the classification of $C^*$-algebras of twisted isometries with finite dimensional wandering spaces}
	\maketitle
	
	\begin{abstract}
		Let \( m, n \in \mathbb{N}_0 \), and let \( X \) be a closed subset of \( \mathbb{T}^{\binom{m+n}{2}} \). We define \( C^{m,n}_X \) to be the universal \( C^* \)-algebra among those generated by \( m \) unitaries and \( n \) isometries satisfying doubly twisted commutation relations with respect to a twist \( \mathcal{U} = \{U_{ij}\}_{1 \leq i < j \leq m+n} \) of commuting unitaries having joint spectrum \( X \).
		We provide a complete list of the irreducible representations of \( C^{m,n}_X \) up to unitary equivalence and, under a denseness assumption on \( X \), explicitly construct a faithful representation of \( C^{m,n}_X \). Under the same assumption, we also give a necessary and sufficient condition on a fixed tuple \( \mathcal{U} \) of commuting unitaries with joint spectrum \( X \) for the existence of a universal tuple of \( \mathcal{U} \)-doubly twisted isometries.
		For \( X = \mathbb{T}^{\binom{m+n}{2}} \), we compute the \( K \)-groups of \( C^{m,n}_X \). We further classify the \( C^* \)-algebras generated by a pair of doubly twisted isometries with a fixed parameter \( \theta \in \mathbb{R} \setminus \mathbb{Q} \), whose wandering spaces are finite-dimensional. Finally, for a fixed unitary \( U \), we classify all the \( C^* \)-algebras generated by a pair of \( U \)-doubly twisted isometries with finite-dimensional wandering spaces.
		
	\end{abstract}
	
		\textbf{AMS Subject Classification No.:} 46L85, 46L80,  46L05.

	\textbf{Keywords}: Isometries; von Neumann-Wold decomposition; twisted commutation relations; spectral multiplicity; noncommutative torus; essential extension.

	\tableofcontents
	
	\section{Introduction}
	
	The problem of classifying objects within a category is a fundamental one, and 
    \( C^* \)-algebras form no exception. 
    The Elliott classification program, which began in the 1990s (see \cite{Ell-1995aa}), aims to classify 
    \( C^* \)-algebras up to isomorphism using computable invariants such as \( K \)-theory and traces. 
    Remarkable progress has been made for large classes of nuclear, separable, simple \( C^* \)-algebras, 
    particularly those satisfying the Universal Coefficient Theorem (UCT) and exhibiting regularity 
    properties such as finite nuclear dimension or \( \mathcal{Z} \)-stability. 
    However, for non-simple \( C^* \)-algebras, the classification problem remains significantly more intricate.
    
    This article focuses on $C^*$-algebras generated by isometries and unitaries satisfying twisted commutation relations. The structure of such \( C^* \)-algebras have been extensively studied from various perspectives (see, e.g., \cite{Web-2013aa}, \cite{DejPin-2016aa}, \cite{RakSarSur-2022aa}, \cite{RakSarSur-2022ab}). 
    A fundamental example is the noncommutative torus \( \mathcal{A}_\Theta^n \), the universal \( C^* \)-algebra generated by unitaries \( u_1, \ldots, u_n \) satisfying \( u_i u_j = e^{2\pi i \theta_{ij}} u_j u_i \), for a real skew-symmetric matrix \( \Theta = (\theta_{ij}) \) \cite{Rie-1990aa}. 
    Proskurin \cite{Pro-2000aa} extended this framework by studying \( C^* \)-algebras \( \mathcal{A}_{0,\Theta} \) generated by isometries \( v_1, \ldots, v_n \) satisfying the twisted relation \( v_i^* v_j = e^{2\pi i \theta_{ij}} v_j v_i^* \), and further representation-theoretic results for these algebras were obtained by Kabluchko \cite{Kab-2001aa}. 
    For a fixed \( \binom{m+n}{2} \)-tuple of commuting unitaries \( \mathcal{U} \), Jaydeb et al.\ \cite{RakSarSur-2022aa} considered tuples of \( \mathcal{U} \)-doubly twisted isometries and proved that any such tuple admits a Wold–von Neumann decomposition. 
    Weber \cite{Web-2013aa} examined \( C^* \)-algebras generated by two isometries twisted either by a tensor-type or a free-type twist, analyzing their ideal structures and computing their \( K \)-theory. 
    Recently, Bhatt and Saurabh \cite{BhaSau-2023aa} explored a higher-dimensional generalization involving \( m \) unitaries and \( n \) isometries, namely the \( C^* \)-algebras \( C^{m,n}_{\Theta} \), and analyzed their representation theory and \( K \)-stability.

    In this article, our main object of study is the universal \( C^* \)-algebra, denoted by \( C^{m,n}_X \), in the category of \( C^* \)-algebras generated by \( \mathcal{U} \)-doubly twisted \( m \) unitaries and \( n \) isometries, where \( \mathcal{U} \) is any \( \binom{m+n}{2} \)-tuple of commuting unitaries whose joint spectrum \( \sigma(\mathcal{U}) \) is contained in \( X \). 
     To state the problem clearly, without delving into technical jargon yet still capturing the essence, we assume for now that \( m = 0 \). A natural question arises: given a \( \binom{n}{2} \)-tuple of commuting unitaries \( \mathcal{U} \), with \( \sigma(\mathcal{U}) = X \), acting on a Hilbert space \( \mathcal{H} \), does there exist an \( n \)-tuple of \( \mathcal{U} \)-doubly twisted isometries such that the \( C^* \)-algebra generated by this tuple is canonically isomorphic to \( C^{0,n}_X \)? Do the multiplicities arising in the spectral decomposition of \( \mathcal{U} \) also play a role in ensuring the existence of such a tuple?
     Here we show that this is indeed the case. Under the assumption that the set of non-degenerate points in \( X \) is dense, we prove that such a tuple exists if and only if the spectral multiplicities of points in \( X \) are infinite.
     
   The next question we address in this paper pertains to the classification of \( C^* \)-algebras generated by a pair of doubly twisted isometries. Let \( (T_1, T_2) \) be a pair of such isometries satisfying \( T_1^* T_2 = e^{2\pi i \theta} T_2 T_1^* \) for some \( \theta \in \mathbb{R} \setminus \mathbb{Q} \). Under the assumption that all wandering subspaces arising in the von Neumann--Wold-type orthogonal decomposition of \( (T_1, T_2) \) are finite-dimensional, we classify the resulting \( C^* \)-algebra. We compute the \( K \)-groups of these \( C^* \)-algebras and prove that the \( K \)-groups serve as complete invariants for this class of \( C^* \)-algebras. 
   Next, for a fixed unitary \( U \), we consider a pair of \( U \)-doubly twisted isometries with finite-dimensional wandering spaces and show that, in such cases, the spectrum of \( U \) is a finite set. Using this fact, we classify all the \( C^* \)-algebras generated by such pairs. This analysis relies crucially on the orthogonal decomposition techniques developed in \cite{RakSarSur-2022aa} and the tools of \( C^* \)-extension theory developed in \cite{Lin-2009aa}.

In the next section, we recall some basic definitions and preliminaries needed for the rest of the paper. Section~$2$ deals with the representation theory of \( C^{m,n}_X \); under a mild condition on \( X \), we describe an explicit faithful representation of \( C^{m,n}_X \), and then settle the first question raised above. Using the \( C(X) \)-algebra structure of \( C^{m,n}_X \), we show that its center coincides with the \( C^* \)-subalgebra generated by the unitary tuple \( \mathbb{U} \). Section~$4$ computes the \( K \)-groups of \( C^{m,n}_X \) when \( X = \mathbb{T}^{\binom{m+n}{2}} \). The final section is devoted to the classification of \( C^* \)-algebras generated by a pair of doubly twisted isometries with finite-dimensional wandering spaces.

	Throughout, all Hilbert spaces and $C^*$-algebras are assumed to be separable and defined over the complex field $\mathbb{C}$. We denote $\mathbb{N}_0 := \mathbb{N} \cup \{0\}$. The standard orthonormal bases of $\ell^2\left(\mathbb{N}_0\right)$ and $\ell^2\left(\mathbb{Z}\right)$ are denoted by $\{e_n\}_{n \in \mathbb{N}_0}$ and $\{e_n\}_{n \in \mathbb{Z}}$, respectively. The right shift $e_n \mapsto e_{n+1}$ is denoted by $S^*$, and the number operator by $N$. The Toeplitz algebra generated by $S^*$ is denoted $\mathcal{T}$, and $\mathcal{K}$ denotes the algebra of compact operators. For a $C^*$-algebra $A$ and elements $a_1, \dots, a_n \in A$, we write $\langle a_1, \dots, a_n \rangle$ for the closed two-sided ideal they generate. The unit circle in $\mathbb{C}$ is denoted by $\mathbb{T}$, and $\sigma(\mathcal{U})$ denotes the joint spectrum of a commuting tuple of unitaries $\mathcal{U} = \langle U_{ij} \rangle_{1 \leq i < j \leq m+n}$.  
	Let $\Omega_k$ denote the set of skew-symmetric real $k \times k$ matrices, and let $\Omega_k^*$ be the subset of $\Omega_k$ consisting of all nondegenerate elements.

    \section{Preliminaries}
    In this section, we introduce some definitions and notations which will be used throughout the paper. 
  \bdfn \label{isometry}
  Set $C^{0,0}=\bbc$, $C^{1,0}=C(\bbbt)$ and $C^{0,1}=\scrt$. For $m+n>1$ and  $\Theta=\{\theta_{ij}:1 \leq i <j \leq m+n\}$,  define $C_{\Theta}^{m,n}$ to be the universal $C^*$-algebra generated by 
  $s_1, s_2, \cdots s_{m+n}$ satisfying the following relations;
  \begin{IEEEeqnarray*}{rCll}
  	s_i^*s_j&=&e^{-2\pi \mathrm{i} \theta_{ij} }s_js_i^*,\,\,& \mbox{ if } 1\leq i<j \leq m+n;\\
  	s_i^*s_i&=& 1,\,\, & \mbox{ if } 1\leq i\leq m+n;\\
  	s_is_i^*&=&1, \,\, & \mbox{ if }  1\leq i \leq m.
  \end{IEEEeqnarray*}
  \edfn
  
  \bdfn \label{main definition} Let $m, n \in \bbn_0$ with $m+n\geq 2$ and let $X$ be a closed subset of  $\bbbt^{{m+n \choose 2}}$. 	We define $C_{X}^{m,n}$ to be the universal $C^*$-algebra generated by 
  $s_1, s_2, \cdots s_{m+n}$ and $\mathbb{U}:=\{u_{ij}:1\leq i <j \leq m+n\}$ 
  such that   
  \begin{IEEEeqnarray}{rCll} \label{R2}
  	s_i^*s_j&=&u_{ij}^*s_js_i^*\,\,& \mbox{ if } 1\leq i<j \leq m+n,\\
  	s_i^*s_i&=& 1\,\, & \mbox{ if } 1\leq i\leq m+n,\\
  	s_is_i^*&=&1 \,\, & \mbox{ if }  1\leq i \leq m,\\
  	s_i u_{pq}&=&u_{pq}s_i\,\, & \mbox{ if }  1\leq i \leq m+n, 1\leq p<q \leq m+n,\\
  	u_{ij}u_{ij}^*&=& 	u_{ij}^*u_{ij}=1 \, \, & \mbox{ if }  1\leq i <j \leq m+n,\\
  	u_{pq}u_{ij}&=& u_{ij}u_{pq}\,\, & \mbox{ if }  1\leq i <j \leq m+n, 1\leq p<q \leq m+n,
  \end{IEEEeqnarray} 
  and  the joint spectrum 	$\sigma(\mathbb{U})\subset X$.
  \edfn 	
  \brmrk
  \begin{enumerate} [(i)]
  	\item If $X$ be a singleton set consisting the tuple $(e^{2\pi \mathrm{i} \theta_{ij}} : 1 \leq i<j\leq m+n )$, then $C_{X}^{m,n}=C_{\Theta}^{m,n}$. 
  	\item If all the unitaries $u_{ij}$ are such that $\sigma(u_{ij})=\bbbt^{{m+n \choose 2}}$, then we denote $C_{X}^{m,n}$ by $C_{\mathbb{U}}^{m,n}$ where $\mathbb{U}=\{u_{ij}:1\leq i<j\leq m+n\}$.
    \item For notational consistency, we define $C_{\mathbb{U}}^{1,0}$ to be the universal $C^*$-algebra generated by a unitary element $s_1$, so that $C_{\mathbb{U}}^{1,0} \cong C(\mathbb{T})$. Similarly, we define $C_{\mathbb{U}}^{0,1}$ to be the universal $C^*$-algebra generated by an isometry $s_1$, i.e., the Toeplitz algebra.
    \end{enumerate}
    \ermrk 
  \bppsn \label{joint spectrum}  In $C_{X}^{m,n}$, the joint spectrum   of $\mathbb{U}=\{u_{ij}:1\leq i <j \leq m+n\}$ is  $X$. 
  As a consequence, the $C^*$-algebra generated by $\{u_{ij}:1\leq i <j \leq m+n\}$ is isomorphic to $C(X)$. 
  \eppsn 
  \prf
  From the definition of $C_{X}^{m,n}$, it follows that 	$\sigma(\mathbb{U})\subset X$.  Take  $x \in X$. Any   representation $\psi$   of $C_{x}^{m,n}$  induces  a representation $\Psi$ of  $C_{X}^{m,n}$ by mapping $u_{ij}$ to $x_{ij}$ and $s_k$ to $\psi(s_k)$ as $x_{ij}$'s and $\psi(s_k)$'s satisfy the defining relations of $C_{X}^{m,n}$. This proves that $x \in \sigma(\mathbb{U})$. 
  \qed \\
  The above argument also provides  concrete realizations of  $C_{X}^{m,n}$ via representations of $C_{x}^{m,n}$ for $x \in X$;  thereby establishing its existence. 
  
  \subsection{Wold-decomposition} 
  \bdfn \label{definition of U}
  Let $\mathcal{U}=\{U_{ij}\}_{1\leq  i<j\leq m+n}$  be a $\binom{m+n}{2}$-tuple of commuting unitaries acting on a Hilbert space $\clh$. An tuple $\mathcal{T}=(T_1,T_2,\cdots , T_{m+n})$ of isometries on $\clh$ is called $\mathcal{U}$-doubly twisted $(m,n)$-isometries if 
  \begin{IEEEeqnarray}{rCll}
  	T_i U_{st}&=&U_{st}T_i, & \mbox{ for } 1\leq i \leq m+n, \, 1 \leq s < t\leq m+n, \label{center}\\
  	T_i^*T_j&=&U_{ij}^*T_jT_i^*, \quad & \mbox{ for } 1 \leq i < j \leq m+n. \label{doubly twisted relation}\\
  	T_iT_i^* &=& 1, & \mbox{ for } 1\leq i \leq m.
  \end{IEEEeqnarray}
  \edfn
  In such a case, we write $\mathcal{T} \leadsto C^{m,n}_X$, where $\sigma(\mathcal{U})=X$.  If $m=0$ the we call $\mathcal{T}$ a tuple of $\mathcal{U}$-doubly twisted isometries.  For any tuple  $\mathcal{T}=(T_1,T_2,\cdots , T_{m+n})$ of $\mathcal{U}$-doubly twisted $(m,n)$-isometries, the von-Neumann-Wold decomposition holds (see \cite{RakSarSur-2022aa}). More precisely, one gets a decomposition  $\clh= \oplus_{A\subset \Sigma_{m,n}}\clh_A$, where $\clh_A$'s are reducing subspaces and  
  \begin{IEEEeqnarray*}{rCl}
  	T_i &=& \oplus_{A\subset\{m+1,\cdots, m+n\}}T_i^A, \, \mbox{ where } T_i^A=\restr{T_i}{\clh_A}  \mbox{ for  } A\subset\{m+1,\cdots, m+n\} \mbox{ and } 1\leq i \leq m+n,
  \end{IEEEeqnarray*}
  and $T_i^A$ is an isometry if $i \in A$, and unitary if $i \notin A$. Furthermore, $U_{ij}$'s reduce $\clh_A$. We denote $\restr{U_{ij}}{\clh_A}$ by $U_{ij}^A$. The joint spectrum of $\{U_{ij}^A: 1\leq  i<j\leq n\}$ is denoted by $\sigma^A(\mathcal{U})$. Moreover,
  from the relation (\ref{doubly twisted relation}), one has 
  $$ U_{ij}^A= (T_i^A)^*T_j^A(T_j^A)^*T_i^A, \mbox{ for } i \neq j.$$
  Define $W_A=\cap_{i \in A} \ker T_i^*$ if $A$ is nonempty, and $\clh$ if $A=\phi$. We call these subspaces as the wandering subspaces of the tuple $\mathcal{T}$.
   For any  $E=\{i_{1},\ldots,i_{l}\}\in I_{m+n} $ and  $\alpha=(\alpha_{i_{1}},\ldots,\alpha_{i_{l}})\in\mathbb{N}^{|A|}$, define  $$T_{E}^{\alpha}=T_{i_{1}}^{\alpha_{i_{1}}}\ldots T_{i_{l}}^{\alpha_{i_{l}}}.$$ 
For $A \in \{m+1,\cdots , m+n\}$, one can define 
$$ B_{A}=\cap_{\alpha \in\mathbb{N}^{|A^c|}}\,\, T_{ A^c}^{\alpha }\mathcal{W}_{A}.$$
Then we have 
   $$ \clh_A=\oplus_{\gamma \in \mathbb{N}^{|A|}} \, \, T_{A}^{\gamma} B_A.$$
  Note that  $W_{A}$ is a reducing subspace for $T_{j}$ for all $j\in A^{c}$, in particular, for the unitary operators $T_{1},\ldots,T_{m}$, and therefore,  for all $i\in I_m$, we have $T_{i}W_{A}=W_{A}$ and hence $T_{i}^{\alpha_{i}}W_{A}=W_{A}$ for all $\alpha_{i}\in\mathbb{N}$. Further, since  $W_{A}$ is a reducing subspace for $U_{ij}$ for all $1\leq i\neq j\leq m+n$, hence $U_{ij}W_A=W_A$, and $T_iT_j=U_{ij}T_jT_i$, it follows that 
  $$B_A=\cap_{\alpha \in\mathbb{N}^{|A^c\setminus I_m|}}\,\, T_{ A^c\setminus I_m}^{\alpha}\mathcal{W}_{A} \quad \mbox{ and } \quad 
   \mathcal{H}_{A}=\oplus_{\gamma \in\mathbb{N}^{|A|}}T_{A}^{\gamma}(B_A). $$
   In other words,  the unitary elements $T_{1},\ldots,T_{m}$ do not appear in the expression of $B_{A}$. If we take,  $A=\{m+1,\ldots,m+n\}$, then one has  $B_{A}=W_{A}$.
  
  \subsection{$C(X)$-algebra structure}
  Since $u_{ij}$'s are in the center of $C_{X}^{m,n}$, we  get a homomorphism
  \[
  \beta: C(X) \rightarrow Z(C_{X}^{m,n}), \quad f(x)\mapsto f(\mathbb{U}).
  \]
  This map gives  $C_{X}^{m,n}$ a $C(X)$-algebra structure.  For $x \in X$, define $\mathcal{I}_{x}$ to be the ideal of $C_{X}^{m,n}$ generated by $\{\beta(f-f(x)): f \in C(X)\}=\{f(\mathbb{U}):f \in C(X)\}$. 
  Let  $\pi_{x}: C_{X}^{m,n} \rightarrow C_{X}^{m,n}/\mathcal{I}_{x}$  to be the quotient map. 
  Write $\pi_{x}(a)$ as $a_{x}$ for $a \in C_{X}^{m,n}$. The following theorem establishes  $C_{X}^{m,n}$  a continuous $C(X)$-algebra. 
  \bppsn \label{continuous algebra}
  Let $m,n\in \bbn_0$ with $m+n\geq 2$. Then the  $C^*$-algebra 
  $C_{X}^{m,n}$ is a continuous $C(X)$-algebra.
  \eppsn
  \prf  
  It follows from  [\cite{BhaSau-2023aa}, Theorem $5.4$] that the  $C^*$-algebra generated by any tuple of 
  $\mathcal{U}$-doubly twisted $(m,n)$-isometries is a continuous $C(\sigma(\mathcal{U}))$-algebra. If one takes a faithful representation $\zeta$ of $C_{X}^{m,n}$ and $\mathcal{U}=\pi(\mathbb{U})$ then $\mathcal{S}=(\zeta(s_1),\cdots, \zeta(s_{m+n})$ is a $\mathcal{U}$-doubly twisted $(m,n)$-isometries. From Proposition \ref{joint spectrum}, we get $\sigma(\mathcal{U})=X$. This proves that $C_{X}^{m,n}$ is a continuous $C(X)$-algebra.
  \qed \\
  In the following proposition, we show that the fibre  of $C_{X}^{m,n}$ at the point $x$ is $C_{x}^{m,n}$. To do so, we require a result, which we will prove in the next section.
  \bppsn 
  Let $x \in X$. Then there exists an isomorphism   $\Psi_x: C_{X}^{m,n} /\mathcal{I}_{x} \rightarrow  C_{x}^{m,n}$ sending $[s_i]$ to $s_i$ and $[u_{ij}]$ to $x_{ij}\one$.
  \eppsn 
  \prf  Consider the map 
  $$ \Phi_x: C_{X}^{m,n}  \rightarrow  C_{x}^{m,n}; \quad s_i \mapsto s_i,  \quad u_{ij} \mapsto  x_{ij}\one.$$
  That $\Phi_x$ is a surjective homomorphism follows from the  defining relations (\ref{main definition}, \ref{isometry}) of $C_{X}^{m,n}$ and $C_{x}^{m,n}$. Note that $\ker \Phi_x \supset \mathcal{I}_x$. This induces a surjective homomorphism $\Psi_x: C_{X}^{m,n} /\mathcal{I}_{x} \rightarrow  C_{x}^{m,n}$. To show injectivity of $\Psi_x$, it is enough to show that each irreducible representation of  $C_{X}^{m,n}$ that vanishes on $\mathcal{I}_{x}$ factors through  $\Phi_x$, which can easily be checked using Theorem \ref{representations1}.  \qed 
  \brmrk \label{convention}
 Due to the above isomorphism,  we identify $\pi_x$ with $\Psi_x \circ \pi_x$ and view $a_x$ as an element of $C_{x}^{m,n}$. 
  \ermrk 
  The following propositios will come in handy in establishing classification results in this paper, as it enables us to disregard certain superfluous components of the 
  $C^*$-algebra of twisted isometries.
  \bppsn \label{isomdirectsum}
  Let $A$ be a $C^*$-algebra
  and let  $\phi_{\alpha}:A\rightarrow B_{\alpha}$ be a $*$-homomorphisms for all $\alpha \in \bigwedge$.  Define $C$ to be the $C^*$-subalgebra of $\oplus_{\alpha \in \bigwedge}B_{\alpha}$  generated by $\{\oplus_{\alpha\in \bigwedge}\phi_{\alpha}(a): a \in A\}$. If for some $\alpha_0\in \bigwedge$, the homomorphism $\phi_{\alpha_0}$ is injective then $A$ is isomorphic to $C$. 
  \eppsn 
  \prf Consider the $*$-homomorphism 
  $$ \oplus_{\alpha\in \bigwedge}\phi_{\alpha}: A \rightarrow \oplus_{\alpha \in \bigwedge}B_{\alpha}; \quad a \mapsto \oplus_{\alpha\in \bigwedge}\phi_{\alpha}(a).$$ 
  Since  $\ker \phi_{\alpha_0}=\{0\}$, we get
  $$\ker \oplus_{\alpha\in \bigwedge}\phi_{\alpha}=\cap_{\alpha \in \bigwedge} \ker \phi_{\alpha}=\{0\}.$$
  Therefore the map $\oplus_{\alpha\in \bigwedge}\phi_{\alpha}$ gives an isomorphism from $A$ onto $C$. 
  \qed  \\
  We mention two more results of this type and omit the proof, as it is straightforward to verify.
   \bppsn \label{isomdirectsum1}
  Let $A$ be a $C^*$-algebra and let  $\phi_{\alpha}:A\rightarrow B_{\alpha}$ be a $*$-homomorphisms for all $\alpha \in \bigwedge$.    Suppose that  there exists $\bigwedge^{\prime}\subset \bigwedge$ and  $\alpha_0 \in \bigwedge \setminus \bigwedge^{\prime}$ such that  $B_{\alpha}=B_{\alpha_0}$ and $\phi_{\alpha}=\phi_{\alpha_0}$ for all $\alpha \in \bigwedge^{\prime}$. Then the $C^*$-subalgebra of $\oplus_{\alpha \in \bigwedge}B_{\alpha}$  generated by $\{\oplus_{\alpha\in \bigwedge}\phi_{\alpha}(a): a \in A\}$ is isomorphic to the $C^*$-subalgebra of $\oplus_{\alpha \in \bigwedge \setminus \bigwedge^{\prime}}\, \, B_{\alpha}$  generated by $\{\oplus_{\alpha\in \bigwedge \setminus \bigwedge^{\prime}} \, \, \phi_{\alpha}(a): a \in A\}$ .
  \eppsn 
  
   \bppsn \label{isomdirectsum2}
  Let $A$ be a $C^*$-algebra and let  $\phi_{\alpha}:A\rightarrow B_{\alpha}$ be a $*$-homomorphisms for all $\alpha \in \bigwedge$.    Suppose that  there exists $\bigwedge^{\prime}\subset \bigwedge$ and  $\alpha_0 \in \bigwedge \setminus \bigwedge^{\prime}$, and a homomorphism $\phi_{\alpha}^0: B_{\alpha_0} \rightarrow B_{\alpha}$ for all $\alpha \in \bigwedge^{\prime}$ such that $\phi_{\alpha}= .\phi_{\alpha}^0 \circ \phi_{\alpha_0}$. Then the $C^*$-subalgebra of $\oplus_{\alpha \in \bigwedge}B_{\alpha}$  generated by $\{\oplus_{\alpha\in \bigwedge}\phi_{\alpha}(a): a \in A\}$ is isomorphic to the $C^*$-subalgebra of $\oplus_{\alpha \in \bigwedge \setminus \bigwedge^{\prime}}\, \, B_{\alpha}$  generated by $\{\oplus_{\alpha\in \bigwedge \setminus \bigwedge^{\prime}} \, \, \phi_{\alpha}(a): a \in A\}$ .
  \eppsn

\section{Faithful representation of $C^{m,n}_X$}

In this section, we  will  describe all irreducible representations of $C_{X}^{m,n}$, and using that, we obtain a faithful representation of $X$.	Fix $m,n \in \bbn_0$ such that $m+n\geq 2$,  
define   $$\Sigma_{m,n}=\{I\subset \{1,2,\cdots ,m+n\}: \{1,2,\cdots m\} \subset I\} \mbox{ and } \Theta_I=\{\theta_{ij}:1 \leq i<j \leq m+n, i,j\in I\}.$$
Fix $I=\{i_1<i_2<\cdots  < i_r\} \in \Sigma_{m+n}$. Let $I^c=\{j_1<j_2<\cdots < j_s\}$. 	Let $\rho:\cla_{\Theta_I}^{|I|} \rightarrow \mathcal{L}(K)$ be a unital representation. Define a map $\pi_{(I,\rho)}^{\Theta}$ of the $C^*$-algebra $C_{\Theta}^{m,n}$ on the Hilbert space $\clh^{I}:=K\otimes \ell^2(\bbn_0)^{\otimes (m+n-|I|)}$ as follows:
\begin{IEEEeqnarray*}{rCll}
	\pi_{(I,\rho)}^{\Theta} :C_{\Theta}^{m,n} &\rightarrow &\mathcal{L}(\clh^{I})\\
	s_{j_l} &\mapsto& 
	1 \otimes 1^{\otimes^{l-1}}\otimes S^* \otimes e^{2\pi \mathrm{i} \theta_{j_lj_{l+1}} N}\otimes  \cdots \otimes e^{2\pi \mathrm{i} \theta_{j_lj_{s}} N}, &\quad \mbox{ for } 1 \leq l \leq s, \\
	s_{i_l} &\mapsto& 
	\pi(s_{i_l}) \otimes \lambda_{i_l,j_1}\otimes  \lambda_{i_l,j_2} \otimes  \cdots \otimes  \lambda_{i_l,j_s}&\quad \mbox{ for } 1 \leq l \leq r,
\end{IEEEeqnarray*}
where $\lambda_{i_l,j_k}$ is  $e^{-2\pi \mathrm{i} \theta_{i_l,j_k} N} $ if $i_l > j_k$ and $e^{2\pi \mathrm{i} \theta_{i_l,j_k} N}$  if $i_l < j_k$. 

\bthm \rm(\cite{BhaSau-2023aa})   \label{representations} 
The set $\{\pi_{(I,\rho)}^{\Theta}: I \subset \Sigma_{m,n}, \rho \in \widehat{\cla_{\Theta_I}^{|I|}}\}$ gives all irreducible representations of $C_{\Theta}^{m,n}$ upto unitarily equivalence. 
\ethm

By \cite[Corollary~7.6]{RakSarSur-2022aa}, the space $\widehat{C_X^{0,n}}$, where $X=\mathbb{T}^{\binom{n}{2}}$ is characterized in terms of the irreducible representations of $2^n$ noncommutative tori $\mathbb{T}_A$, where $A \subseteq \{1, 2, \dots, n\}$. In the following proposition, we characterize $\widehat{C_X^{m,n}}$ in terms of $\widehat{C_x^{m,n}}$. For any $x = \langle x_{ij} \rangle_{1 \leq i < j \leq m+n} \in X$, we identify the quotient algebra $C_X^{m,n} / \mathcal{I}_x$ with $C_x^{m,n}$ (see Proposition \ref{continuous algebra}), and denote the canonical $^*$-homomorphism by $\pi_x : C_X^{m,n} \to C_x^{m,n}$, given by
$$
\pi_x(u_{ij}) = x_{ij} I, \quad \pi_x(s_i) = s_i \quad \text{for all } i, j.
$$

      \begin{ppsn}\label{irred}
        	Let $m,n \in \mathbb{N}_0$ with $m+n \geq 2$. The map
        	\[
        	\bigsqcup_{x \in X} \widehat{C_x^{m,n}} \to \widehat{C_X^{m,n}}, \quad \left[\Phi_x\right] \mapsto \left[\Phi_x \circ \pi_x\right],
        	\]
        	where $\Phi_x \in \widehat{C_x^{m,n}}$ and $x\in X$, is a bijection.
        \end{ppsn}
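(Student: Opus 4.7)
The plan is to verify the three standard properties — well-definedness on unitary-equivalence classes, surjectivity, and injectivity — with the central mechanism being Schur's lemma applied to the central unitaries $u_{ij}$.

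For well-definedness, note that $\pi_x$ is surjective (it is the quotient map onto the fibre and we identify $C_X^{m,n}/\mathcal{I}_x$ with $C_x^{m,n}$ via Proposition \ref{continuous algebra}), so the image of $\Phi_x \circ \pi_x$ equals the image of $\Phi_x$; hence irreducibility and unitary-equivalence classes transport correctly. Thus the map is well defined as a map of equivalence classes.

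For surjectivity, let $\pi : C_X^{m,n} \to \mathcal{L}(\mathcal{H}_\pi)$ be any irreducible representation. The defining relations \eqref{R2} ensure that each $u_{ij}$ is central in $C_X^{m,n}$, so by Schur's lemma each $\pi(u_{ij})$ is a scalar multiple of the identity, say $\pi(u_{ij}) = x_{ij}\,I$ with $x_{ij} \in \bbbt$. Set $x = \langle x_{ij}\rangle \in \bbbt^{\binom{m+n}{2}}$. Since Proposition \ref{joint spectrum} gives $\sigma(\mathbb{U})=X$, and $\pi$ restricted to $C^*(\mathbb{U}) \cong C(X)$ is a character taking the value $f(x)$ on $f(\mathbb{U})$, we must have $x \in X$; otherwise we could choose $f \in C(X)$ vanishing on $X$ but not at $x$, contradicting $\pi(f(\mathbb{U}))=f(x)I$. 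Consequently $\pi$ annihilates $\mathcal{I}_x = \lgl f(\mathbb{U}) - f(x) : f \in C(X) \rgl$, so $\pi$ factors as $\pi = \Phi_x \circ \pi_x$ for a uniquely determined $*$-homomorphism $\Phi_x : C_x^{m,n} \to \mathcal{L}(\mathcal{H}_\pi)$. Irreducibility of $\Phi_x$ follows from that of $\pi$ since $\pi_x$ is surjective.

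For injectivity, suppose $(x,[\Phi_x])$ and $(y,[\Psi_y])$ produce unitarily equivalent representations, say $U(\Phi_x \circ \pi_x)(\cdot)U^* = \Psi_y \circ \pi_y(\cdot)$ for some unitary $U$. Evaluating on $u_{ij}$ yields $U(x_{ij} I)U^* = y_{ij} I$, hence $x_{ij}=y_{ij}$ for every pair $(i,j)$, i.e.\ $x=y$. The equivalence then descends, via the surjectivity of $\pi_x = \pi_y$, to an equivalence $\Phi_x \sim \Psi_x$ in $\widehat{C_x^{m,n}}$.

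The only step that is not purely formal is the verification that the scalar tuple $x$ produced by Schur's lemma lies in the prescribed set $X$; this is the main (though mild) obstacle, and it is handled by Proposition \ref{joint spectrum}, which realises $C^*(\mathbb{U})$ as $C(X)$ and thereby constrains every character of the centre to come from a point of $X$. All remaining steps are direct consequences of Schur's lemma and the universal property of the quotient $\pi_x$.
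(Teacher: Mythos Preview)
Your proof is correct and follows essentially the same approach as the paper's: Schur's lemma forces each $\pi(u_{ij})$ to be a scalar, the resulting tuple $x$ lies in $X=\sigma(\mathbb{U})$ so that $\pi$ factors through $\pi_x$, and injectivity is handled identically by evaluating the intertwining relation on the $u_{ij}$. One minor phrasing slip: the clause about choosing ``$f\in C(X)$ vanishing on $X$ but not at $x$'' is ill-formed (any such $f$ is identically zero), but the intended point---that the character of $C^*(\mathbb{U})\cong C(X)$ induced by $\pi$ is evaluation at some point of $X$---is correct and is all that is needed.
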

        
        \begin{proof}
        	To prove surjectivity, let $\Psi \in \widehat{C_X^{m,n}}$. Since $u_{ij} \in \mathcal{Z}\left(C_X^{m,n}\right)$ and $u_{ij}^* u_{ij} = u_{ij} u_{ij}^* = 1$ for each $1 \leq i < j \leq m+n$, it follows that $\Psi(u_{ij}) = x_{ij}I$ for some $x=\left\langle x_{ij}\right\rangle_{1\leq i<j\leq m+n}\in X$. By the universality of $C_x^{m,n}$, there exists a unique $^*$-homomorphism $\Phi_x: C_x^{m,n} \to \Psi\left(C_X^{m,n}\right)$ such that for each $1 \leq i \leq m+n$, 
        	\[
        	\Phi_x(s_i) = \Psi(s_i), \quad \text{i.e.,} \quad \Phi_x \circ \pi_x(s_i) = \Psi(s_i),
        	\]
        	and for each $1 \leq i < j \leq m+n$,
        	\[
        	\Phi_x\left(x_{ij}I\right) = x_{ij}, \quad \text{i.e.,} \quad \Phi_x \circ \pi_x(u_{ij}) = \Psi(u_{ij}).
        	\]
        	Thus, $\left[\Phi_x\right]$ is a preimage of $\left[\Psi\right]$, proving surjectivity.
        	
        	To prove injectivity, let $\Phi_x^{(1)} \circ \pi_x: C_X^{m,n} \to \mathcal{L}(\mathcal{H})$ and $\Phi_y^{(2)} \circ \pi_y: C_X^{m,n} \to \mathcal{L}\left(\mathcal{H}'\right)$ be two unitarily equivalent irreducible representations. Then there exists a unitary operator $U: \mathcal{H} \to \mathcal{H}'$ such that for all $a \in C_X^{m,n}$,
        	\[
        	U \Phi_x^{(1)} \circ \pi_x(a) U^{-1} = \Phi_y^{(2)} \circ \pi_y(a).
        	\]
        	In particular, for each $1 \leq i < j \leq m+n$,
        	\[
        	U \Phi_x^{(1)} \circ \pi_x(u_{ij}) U^{-1} = \Phi_y^{(2)} \circ \pi_y(u_{ij}),
        	\]
        	which implies $x_{ij} = y_{ij}$ and hence $x = y$. Moreover, for each $1 \leq i \leq m+n$,
        	\[
        	U \Phi_x^{(1)} \circ \pi_x(s_i) U^{-1} = \Phi_x^{(2)} \circ \pi_x(s_i),
        	\]
        	i.e., $U \Phi_x^{(1)}(s_i) U^{-1} = \Phi_x^{(2)}(s_i)$. Therefore, $\Phi_x^{(1)}$ and $\Phi_x^{(2)}$ are unitarily equivalent, proving injectivity.
        \end{proof}

\bthm \label{representations1}
Let $m,n \in \bbn_0$ with $m+n\geq 2$ and let $X$ be a closed  subset of $ \bbbt^{m+n\choose 2}$. Then the set $\{\pi_{(I,\rho)}^{\Theta}: I \in \Sigma_{m,n}, \rho \in \widehat{\cla_{\Theta_I}^{|I|}}, \Theta \in X \subset \bbbt^{m+n\choose 2} \}$ gives all irreducible representations of $C_{X}^{m,n}$ upto  unitary equivalence. 
\ethm 
\prf  The claim follows from Theorem \ref{representations} and Proposition \ref{irred}.
\qed

\bppsn \label{intersection}
Let $m,n \in \bbn_0$ with $m+n\geq 2$. Let $x \in X$ and $\mathcal{I}_x$ be the closed ideal of $C^{m,n}_X$ generated by $\{u_{ij}-x_{ij}\one:1 \leq i < j \leq m+n\}$. Then one has 
$$ \bigcap_{x\in X}\, \,  \mathcal{I}_x=\{0\}.$$
\eppsn
\prf If $ a \in \bigcap_{x\in X}\, \,  \mathcal{I}_x$, then it follows from Theorem \ref{representations1} that  $\pi(a)=0$ for any irreducible representation $\pi$  of $C^{m,n}_X$. This proves that $a=0$.
\qed 
\bppsn \label{intersection1}  Let $D$ be dense in $X$.  Then one has 
$$ \bigcap_{x\in D}\, \,  \mathcal{I}_x=\{0\}.$$ 
\eppsn 
\prf Let $a \in  \bigcap_{x\in D}\, \,  \mathcal{I}_x$. Then $a_x=0$ for $x \in D$.  Take an arbitrary point $y \in X$. Choose a sequence $\{x_n\}_{n=1}^{\infty} \subset D$ such that $x_n \rightarrow y$ as $ n \rightarrow \infty$. Since $C^{m,n}_X$ is a continuous $C(X)$-algebra, we have 
$$ \|a_y\|=\lim_{n \rightarrow \infty} \|a_{x_n}\|=0.$$
The claim now follows from Proposition \ref{intersection}. 
\qed 

\bppsn\label{injective representation} 
Let $D$ be dense in $X$. Let $\beta: C^{m,n}_X \rightarrow C$ be a $*$-homomorphism such that  the homomorphism $\pi_x: C^{m,n}_X \rightarrow C^{m,n}_x$ factors through $\beta$ for all $x \in D$. Then $\beta$ is injective. 
\eppsn 
\prf Let $a \in \ker \beta$. Then $a_x =\pi_x(a)=0$ for $x \in D$ as $\pi_x$ factors through $\beta$. This implies that $a=0$, thanks to Proposition \ref{intersection1}.
\qed 
\blmma \label{Surjective to bijective}
Suppose that  $\pi:C_{x}^{0,n}\rightarrow	\mathcal{L}\left(\mathcal{H}\right)$ is a representation of the $C^*$-algebra $C_x^{0,n}$. If $\displaystyle\pi\left(\prod_{i=1}^n\left(I-s_i s_i^*\right)\right)\neq 0$, then $\pi$ is faithful.
\elmma

\begin{proof}
	Let $S_i = \pi(s_i)$ for each $1 \leq i \leq n$. By the von Neumann–Wold decomposition (see \cite{RakSarSur-2022aa}), the Hilbert space $\mathcal{H}$ decomposes as
	\[
	\mathcal{H} = \bigoplus_{A \subset I_n} \mathcal{H}_A,
	\]
	where $I_n = \{1,2,\ldots,n\}$, where in particular,
	\[
	\mathcal{H}_{I_n} = \bigoplus_{m_i \in \mathbb{N}_0,\; i \in I_n} S_1^{m_1} \cdots S_n^{m_n} W_{I_n},
	\quad \text{with} \quad
	W_{I_n} := \bigcap_{i \in I_n} \ker S_i^*.
	\]
	
	Let $p = \prod_{i=1}^n (I - s_i s_i^*)$. Then
	\[
	\pi(p) = \prod_{i=1}^n \mathrm{proj}_{\ker S_i^*}
	= \mathrm{proj}_{\bigcap_{i=1}^n \ker S_i^*}
	= \mathrm{proj}_{W_{I_n}}.
	\]
	Since $\pi(p) \neq 0$, we have $W_{I_n} \neq \{0\}$. Choose a unit vector $w \in W_{I_n}$. Consider the subspace
	\[
	\mathcal{H}_w := \bigoplus_{m_i \in \mathbb{N}_0,\; i \in I_n} S_1^{m_1} \cdots S_n^{m_n} (\mathbb{C}w),
	\]
	which is invariant under the action of $C_x^{0,n}$. Consider the isomorphism $U_w : \mathcal{H}_w \to \ell^2(\mathbb{N}_0)^{\otimes n}$ specified by
	$$U_w\left(S_1^{m_1} \cdots S_n^{m_n} w\right) = e_{m_1} \otimes \cdots \otimes e_{m_n},\,m_i\in\mathbb{N}_0,\,i\in I_n.$$
	
	To prove that $\pi$ is faithful, it suffices to show that $\pi|_{\mathcal{H}_w}$ is faithful. Define a representation $\psi: C_x^{0,n}\to\mathcal{L}\left(\ell^2\left(\mathbb{N}_0\right)^{\otimes n}\right)$ by $\psi(a) := U_w \pi|_{\mathcal{H}_w}(a) U_w^{-1}, \quad a \in C_x^{0,n}$. Then $\psi$ acts on $\ell^2(\mathbb{N}_0)^{\otimes n}$ via $\psi(s_1) = S^* \otimes I^{\otimes (n-1)}$, and for each $2 \leq i \leq n$,
	\[
	\psi(s_i) = x_{1i}^{N} \otimes \cdots \otimes x_{(i-1)i}^{N} \otimes S^* \otimes I^{\otimes (n - i)}.
	\]
	
	By \cite[Remark 2.9]{BhaSau-2023aa}, this representation $\psi$ is faithful. Therefore, $\pi|_{\mathcal{H}_w}$ is faithful, and hence $\pi$ is injective.
\end{proof}

\bcrlre \label{faithful homomorphism} 
Suppose that  $\pi:C_{x}^{0,n}\rightarrow	C$ is a $*$-homomorphism. If $\displaystyle\pi\left(\prod_{i=1}^n\left(I-s_i s_i^*\right)\right)\neq 0$, then $\pi$ is injective.
\ecrlre
\prf One can view the $C^*$-algebra $C$ as a $C^*$-subalgebra of $\cll(\clh)$ for some Hilbert space $\clh$. The claim now follows immediately from Lemma \ref{Surjective to bijective}.
\qed 
\blmma \label{Surjective to bijective m,n}
Suppose that $\pi:C_{x}^{m,n}\rightarrow	\mathcal{L}\left(\mathcal{H}\right)$ be a representation of the $C^*$-algebra $C_x^{m,n}$. Let $\mathcal{P}=\displaystyle\prod_{i=m+1}^{m+n}\left(I-s_is_i^*\right)$, and $D=C^*(\{\mathcal{P}s_1, \cdots, \mathcal{P}s_m\})$.  If $\displaystyle\pi(\mathcal{P})\neq 0$ and $\pi_{|D}$ is faithful, then $\pi$ is faithful.
\elmma
\prf The proof follows precisely along the lines of the proof of Lemma \ref{Surjective to bijective}, with an application of Theorem $2.7$ in \cite{BhaSau-2023aa}.
\qed 
\bcrlre \label{faithful homomorphism m,n}
Suppose that $\pi:C_{x}^{m,n}\rightarrow	C$ be a $*$-homomorphism. Let  $\mathcal{P}=\displaystyle\prod_{i=m+1}^{m+n}\left(I-s_is_i^*\right)$, and $D=C^*(\{\mathcal{P}s_1, \cdots, \mathcal{P}s_m\})$. If $\displaystyle\pi(\mathcal{P})\neq 0$ and $\pi_{|D}$ is injective, then $\pi$ is injective.
\ecrlre

      \begin{thm}\label{F.rep 0,n}
        	Let $n\in\mathbb{N}$ with $n\geq 2$. Let $\mathcal{U}=\left\langle U_{ij}\right\rangle_{1\leq i<j\leq n}$ be a tuple of commuting unitaries on a Hilbert space $\mathbf{K}$ such that the joint spectrum $\sigma(\mathcal{U}) = X$. For $1\leq i\leq n$, define the unitary operator $F_i$ as follows:
        	\begin{itemize}
        		\item If $i=1$, define $F_i: = I_K$,
        		\item If $2\leq i\leq n$, define $F_i : \mathbf{K}\otimes \ell^2\left(\mathbb{N}_0\right)^{\otimes (i-1)}\to\mathbf{K}\otimes\ell^2\left(\mathbb{N}_0\right)^{\otimes (i-1)}$ by 
        		$$F_i\left(k\otimes e_{m_1}\otimes\cdots\otimes e_{m_{i-1}}\right) = \left(\prod_{s=1}^{i-1} U_{si}^{-m_s}\right)k\otimes e_{m_1}\otimes\cdots\otimes e_{m_{i-1}},$$
        		where $k\in \mathbf{K}$, and $m_s\in\mathbb{N}_0$ for $1\leq s\leq i-1$.
        	\end{itemize}
        	Then, there exists a faithful $^*$-representation $\pi : C_X^{0,n}\to\mathcal{L}\left(\mathbf{K}\otimes\ell^2\left(\mathbb{N}_0\right)^{\otimes n}\right)$ such that:
        	\begin{enumerate}[(i)]
                \item For each $1\leq i<j\leq n$, $\pi\left(u_{ij}\right) = U_{ij}\otimes I$.
        		\item For each $1\leq i\leq n$, $\pi\left(s_i\right) = F_i\otimes S^*\otimes I^{\otimes (n-i)}$.
            \end{enumerate}
        \end{thm}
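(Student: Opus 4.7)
The plan is to first verify that the stated formulas give a well-defined $*$-homomorphism $\pi$ via the universal property of $C_X^{0,n}$, and then to establish faithfulness by a fibre-wise argument using the spectral theorem for $\mathcal{U}$ together with the continuity of the $C(X)$-algebra structure from Proposition~\ref{intersection1}. For the relations, the operators $\{U_{ij}\otimes I\}$ form a commuting family of unitaries with joint spectrum equal to $\sigma(\mathcal{U}) = X$, so the spectral condition is automatic. Each $\pi(s_i)$ is an isometry because $F_i$ is unitary and $S^*$ is an isometry, and $\pi(s_i)$ commutes with every $\pi(u_{pq})$ since $F_i$ is built from the commuting family $\{U_{si}:s<i\}$. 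The only substantive relation is $\pi(s_i)^*\pi(s_j) = \pi(u_{ij})^*\pi(s_j)\pi(s_i)^*$ for $i<j$, which I would verify on a basis vector $k\otimes e_{m_1}\otimes\cdots\otimes e_{m_n}$; the extra factor $U_{ij}$ emerges precisely because $\pi(s_j)$ applied after $\pi(s_i)^*$ reads the $i$-th coordinate as $m_i-1$ instead of $m_i$, converting $U_{ij}^{-m_i}$ into $U_{ij}\cdot U_{ij}^{-m_i}$.

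For faithfulness, I would invoke the spectral theorem for $\mathcal{U}$ to write $\mathbf{K}\cong\int_X^\oplus \mathbf{K}_x\,d\mu(x)$ for a Borel measure $\mu$ on $X$ of full support (since its support equals $\sigma(\mathcal{U}) = X$), with each $U_{ij}$ acting fibrewise as multiplication by $x_{ij}$. Tensoring with $\ell^2(\mathbb{N}_0)^{\otimes n}$ decomposes $\pi$ as $\pi = \int_X^\oplus \pi_x'\,d\mu(x)$, where the fibre $\pi_x'$ on $\mathbf{K}_x\otimes\ell^2(\mathbb{N}_0)^{\otimes n}$ satisfies $\pi_x'(u_{ij}) = x_{ij}I$ and $\pi_x'(s_i) = F_i^{(x)}\otimes S^*\otimes I^{\otimes(n-i)}$, with $F_i^{(x)}(k\otimes e_{\bar m}) = \bigl(\prod_{s<i} x_{si}^{-m_s}\bigr)k\otimes e_{\bar m}$. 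Since $\pi_x'$ sends each $u_{ij}$ to a scalar multiple of the identity, it factors through $C_x^{0,n}$, yielding a $*$-homomorphism $\widetilde\pi_x:C_x^{0,n}\to\mathcal{L}(\mathbf{K}_x\otimes\ell^2(\mathbb{N}_0)^{\otimes n})$ with $\pi_x' = \widetilde\pi_x\circ\pi_x$.

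Whenever $\mathbf{K}_x\neq 0$, a short computation using the unitarity of $F_i^{(x)}$ and $S^*S = I - P_{e_0}$ (where $P_{e_0}$ is the rank-one projection onto $e_0\in\ell^2(\mathbb{N}_0)$) gives $\widetilde\pi_x\bigl(\prod_{i=1}^n(I - s_is_i^*)\bigr) = I_{\mathbf{K}_x}\otimes P_{e_0}^{\otimes n}\neq 0$, so Corollary~\ref{faithful homomorphism} forces $\widetilde\pi_x$ to be injective. Since $\mu$ has full support, the $\mu$-full-measure set $\{x\in X:\mathbf{K}_x\neq 0\}$ is dense in $X$. For $a\in\ker\pi$, the direct integral gives $\pi_x'(a)=0$ for $\mu$-a.e.\ $x$, and injectivity of $\widetilde\pi_x$ forces $\pi_x(a)=0$ on this dense set; Proposition~\ref{intersection1} then yields $a=0$. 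I expect the main technical subtlety to be the bookkeeping for the direct integral decomposition — in particular, verifying that $F_i$ is decomposable with the claimed fibre action and that tensoring with $\ell^2(\mathbb{N}_0)^{\otimes n}$ is compatible with the decomposition — after which the remainder is a clean application of Corollary~\ref{faithful homomorphism} and the continuity of the $C(X)$-algebra structure.
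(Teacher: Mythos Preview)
Your proposal is correct and takes a genuinely different route from the paper. The paper avoids the spectral theorem entirely: rather than disintegrating $\mathbf{K}$, it works inside the image $C = \pi(C_X^{0,n})$ and shows that every irreducible representation of $C_X^{0,n}$ factors through $\pi$. For each $x\in X$ it forms the quotient $C/\overline{\mathcal{I}_x}$ (where $\overline{\mathcal{I}_x} = \pi(\mathcal{I}_x)$), obtains a surjection $\iota\colon C_x^{0,n}\to C/\overline{\mathcal{I}_x}$ satisfying $\iota\circ\pi_x = q\circ\pi$, and then invokes Corollary~\ref{faithful homomorphism} after checking $\iota\bigl(\prod_i(I - s_is_i^*)\bigr)\neq 0$; this nonvanishing is established by a compression argument --- if the product lay in $\overline{\mathcal{I}_x}$, writing $I\otimes p^{\otimes n} = \sum_{i<j} c_{ij}(\overline{u_{ij}} - x_{ij}I)$ and compressing to the $e_0^{\otimes n}$ slot would give $\sum_{i<j} (Qc_{ij}E)(U_{ij} - x_{ij}I) = I$ on $\mathbf{K}$, contradicting $x\in\sigma(\mathcal{U})$. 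Your direct-integral approach trades this algebraic trick for the disintegration of $\mathbf{K}$, which makes the fibrewise nonvanishing immediate ($I_{\mathbf{K}_x}\otimes P_{e_0}^{\otimes n}$) but imports the measurability and null-set bookkeeping you acknowledged. The paper's method is more elementary (no direct integral theory) and in fact shows that $\pi_x$ factors through $\pi$ for \emph{every} $x\in X$; yours establishes this only on a $\mu$-conull, hence dense, set, which suffices by Proposition~\ref{intersection1}.
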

        
       \begin{proof}
       	\begin{figure}[h]
       		\centering
        		\[
        		\begin{tikzcd}[
    column sep=large,
    row sep=large,
    execute at end picture={
        \node at (0.0, 0.9) {\scalebox{3.0}{$\circlearrowright$}};  
        \node at (0.5, -0.7) {{\scalebox{2.2}{$\circlearrowright$}}};  
    }
]
        			C^{0,n}_X 
       			\arrow[r, "\pi"] 
      			\arrow[d, "\pi_x"'] 
        			\arrow[ddr, swap, bend right=20, "\Phi"] 
        			& C 
        			\arrow[d, "q"] 
        			\\
        			C^{0,n}_x 
        			\arrow[r, "\iota"] 
        			\arrow[dr, swap, bend right=10, "\Phi_x"] 
        			& C / \overline{\mathcal{I}_x} 
        			\arrow[d, "\lambda_x"] 
        			\\
        			& \mathcal{L}\left(\mathcal{H}\right)        			\end{tikzcd}
        		\]
        		\caption{Diagram corresponding to Theorem \ref{F.rep 0,n}}
       		\label{fig:commutative_diagram}
        	\end{figure}
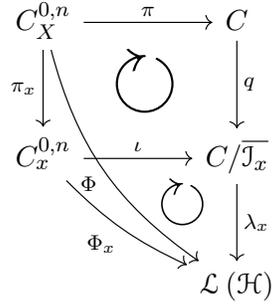
        	
        	Let $\mathcal{H}_0 = \mathbf{K}\otimes\ell^2\left(\mathbb{N}_0\right)^{\otimes n}$. By evaluating on the elements $k\otimes e_{m_1}\otimes\cdots\otimes e_{m_n}$, $k\in\mathbf{K}$, and $m_l\in\mathbb{N}_0$ for $l=1,2,\ldots,n$, we can show that the operators 
       	$$\overline{u_{ij}}:= U_{ij}\otimes I, \text{ and } \overline{s_l} := F_l\otimes S^*\otimes I^{\otimes (n-l)}, \text{ for } 1\leq i,j\leq n,\, 1\leq l\leq n$$
       	satisfy the generating relations of $C_X^{0,n}$. Hence, there exists a unique $^*$-homomorphism $\pi : C_X^{0,n}\to\mathcal{L}\left(\mathcal{H}_0\right)$ such that
        	$$\pi\left(u_{ij}\right) = \overline{u_{ij}}, \text{ and } \pi\left(s_l\right) = \overline{s_l} \text{ for } 1\leq i<j\leq n \text{ and } 1\leq l\leq n.$$
        	
        	To prove the injectivity of $\pi$, it suffices to show that every irreducible representation $\Phi : C_X^{0,n}\to\mathcal{L}\left(\mathcal{H}\right)$ factors through $\pi$, i.e., for any such $\Phi$, there exists a $^*$-representation $\lambda: \pi\left[C_X^{0,n}\right]\to\mathcal{L}\left(\mathcal{H}\right)$ satisfying $\Phi = \lambda\circ\pi$, ensuring $\ker\pi =\{0\}$.
        	
        	Let $C = \pi\left[C_X^{0,n}\right]$. Since we have for each $1\leq i<j\leq n$ that $\Phi\left(u_{ij}\right) = x_{ij}I$ for some $x_{ij}\in\mathbb{T}$, it follows that $\Phi\left(u_{ij}-x_{ij}I\right) = 0$, implying $x:= \left\langle x_{ij}\right\rangle_{1\leq i<j\leq n}\in \sigma(\mathbf{u})$, where $\mathbb{U}:=\left\langle u_{ij}\right\rangle_{1\leq i<j\leq n}$. Denote $\overline{\mathbb{U}}:= \left\langle\overline{u_{ij}}\right\rangle_{1\leq i<j\leq n}$. Consider the closed proper two-sided ideal $\mathcal{I}_x = \displaystyle\sum_{1\leq i<j\leq n} C_X^{0,n}\left(u_{ij} - x_{ij} I\right)$ of $C_X^{0,n}$. Then
        	$$\overline{\mathcal{I}_x} := \pi\left[\mathcal{I}_x\right] = \sum_{1\leq i<j\leq n} A\left(\overline{u_{ij}} - x_{ij}I\right)$$
        	is a closed proper two-sided ideal of $C$, where the properness follows from the fact that $\sigma\left(\overline{\mathbb{U}}\right) = X$.
        	
        	Let $q: C\to C/{\overline{\mathcal{I}_x}}$ denote the natural projection map. Then 
        	$$q\left(\overline{u_{ij}}\right) = x_{ij} I,\, q\left(\overline{s_l}\right), \text{ for } 1\leq i<j\leq n,\, 1\leq l\leq n$$
        	satisfy the generating relations of $C_x^{0,n}$. Hence, there exists a unique $^*$-homomorphism $\iota : C_x^{0,n}\to C/{\overline{\mathcal{I}_x}}$ such that for each $1\leq l\leq n$, $\iota\left(s_l\right) = q\left(\overline{s_l}\right)$. By evaluating on the generators, it is easy to show that $\iota\circ\pi_x = q\circ\pi$ (see Figure \ref{fig:commutative_diagram}), which in particular shows that $\iota$ is surjective.
        	
        	We now claim that $\iota\left(\left(I-s_1 s_1^*\right)\cdots\left(I-s_n s_n^*\right)\right)\neq 0$. By Lemma \ref{Surjective to bijective}, it then follows that $\iota$ is a $^*$-isomorphism. Suppose, for contradiction, that $\iota\left(\left(I-s_1s_1^*\right)\cdots\left(I-s_ns_n^*\right)\right) = 0$. Then
        	$$q\left(\left(I-\overline{s_1}\;\overline{s_1}^*\right)\cdots\left(I-\overline{s_n}\;\overline{s_n}^*\right)\right) = \iota\circ\pi_x\left(\left(I-s_1 s_1^*\right)\cdots\left(I-s_n s_n^*\right)\right) =0,$$
        	which implies that $\left(I-\overline{s_1}\;\overline{s_1}^*\right)\cdots\left(I-\overline{s_n}\;\overline{s_n}^*\right)\in \overline{\mathcal{I}_x}$.
        	
        	Since, for each $1\leq l\leq n$, $F_l$ is a unitary operator, we have 
        	$$\left(I-\overline{s_1}\;\overline{s_1}^*\right)\cdots\left(I-\overline{s_n}\;\overline{s_n}^*\right) = I\otimes\left(I-S^*S\right)^{\otimes n} = I\otimes p^{\otimes n} \in \overline{\mathcal{I}_x}.$$
        	
        	For each $1\leq i<j\leq n$, pick $c_{ij}\in C$ such that
        	$$\sum_{1\leq i<j\leq n} c_{ij}\left(\overline{u_{ij}} - x_{ij}I\right) = \sum_{1\leq i<j\leq n} \left(\overline{u_{ij}} - x_{ij}I\right) c_{ij} = I\otimes p^{\otimes n}.$$
        	
        	Consider the operators $E:\mathbf{K}\to \mathcal{H}_0$, and $Q: \mathcal{H}_0 \to \mathbf{K}$ defined by
        	$$E(k) = k\otimes e_0^{\otimes n},\text{ and } Q\left(k\otimes e_{m_1}\otimes\cdots\otimes e_{m_n}\right) = \delta_{m_1}\cdots\delta_{m_n}\cdot k, \text{ respectively,}$$
        	for $k\in\mathbf{K}$, $m_l\in\mathbb{N}_0$, $1\leq l\leq n$. Evaluating on the elements of the form $k\otimes e_{m_1}\otimes\cdots\otimes e_{m_n}$, $k\in\mathbf{K}$, $m_l\in\mathbb{N}_0$, $1\leq l\leq n$, it follows that
        	$$\sum_{1\leq i<j\leq n} \left(Qc_{ij}E\right)\left(U_{ij} - x_{ij}I\right) = \sum_{1\leq i<j\leq n} \left(U_{ij} - x_{ij} I\right)\left(Qc_{ij}E\right) = I,$$
        	i.e., $x\notin\sigma\left(\mathcal{U}\right)$, a contradiction. Hence, the claim follows, and $\iota$ becomes a $^*$-isomorphism. Therefore, there exists a $^*$-homomorphism $\lambda_x : C/{\overline{\mathcal{I}_x}}\to\mathcal{L}\left(\mathcal{H}\right)$ such that $\lambda_x\circ\iota = \Phi_x$, and then the $^*$-homomorphism $\lambda = \lambda_x\circ q$ satisfies
        	$$\lambda\circ\pi = \lambda_x\left(q\circ\pi\right) = \lambda_x\left(\iota\circ\pi_x\right) = \Phi_x\circ\pi_x =\Phi.$$
        \end{proof}
        
        \begin{thm}\label{F.rep m,n}
         Let $m,n\in\mathbb{N}_0$ with $m+n\geq 2$. Let $X$ be a closed subset of $\mathbb{T}^{\binom{m+n}{2}}$ such that $X\cap\Omega_{m+n}^*$ is dense in $X$. Let $\mathcal{U}=\left\langle U_{ij}\right\rangle_{1\leq i<j\leq n}$ be a tuple of commuting unitaries on a Hilbert space $\mathbf{K}$ such that the joint spectrum $\sigma(\mathcal{U}) = X$.
        
       For $1\leq i\leq m+n$, define the unitary operator $F_i$ as follows:
        \begin{itemize}
        \item If $i=1$, set $F_1:= I_{\mathbf{K}}$.
        \item If $2\leq i\leq m+1$, define $F_i : \mathbf{K}\otimes\ell^2\left(\mathbb{Z}\right)^{\otimes (i-1)} \to \mathbf{K}\otimes\ell^2\left(\mathbb{Z}\right)^{\otimes (i-1)}$ by
        $$F_i\left(k\otimes e_{n_1}\otimes \cdots\otimes e_{n_{i-1}}\right) = \left(\prod_{r=1}^{i-1} U_{ri}^{-n_r}\right)k\otimes e_{n_1}\otimes\cdots\otimes e_{n_{i-1}}.$$
        \item If $m+2\leq i\leq m+n$, define
        $$F_i:\mathbf{K}\otimes\ell^2\left(\mathbb{Z}\right)^{\otimes m}\otimes\ell^2\left(\mathbb{N}_0\right)^{\otimes (i-m-1)} \to \mathbf{K}\otimes\ell^2\left(\mathbb{Z}\right)^{\otimes m}\otimes\ell^2\left(\mathbb{N}_0\right)^{\otimes (i-m-1)}$$ by
        \begin{align*}
        &F_i\left(k\otimes e_{n_1}\otimes\cdots\otimes e_{n_m}\otimes e_{l_1}\otimes\cdots\otimes e_{l_{i-m-1}}\right)\\
        &:=\left(\prod_{r=1}^m U_{ri}^{-n_r}\right)\left(\prod_{s=m+1}^{i-1} U_{si}^{-l_{s-m}}\right)k\otimes e_{n_1}\otimes\cdots\otimes e_{n_m}\otimes e_{l_1}\otimes\cdots\otimes e_{l_{i-m-1}},
        \end{align*}
        \end{itemize}
        where $k\in\mathbf{K}$, $n_r\in\mathbb{Z}$ for $1\leq r\leq m$, and $l_t\in\mathbb{N}_0$ for $1\leq t\leq n-1$.
        
        Then there exists a faithful $^*$-representation $\pi : C_X^{m,n}\to\mathcal{L}\left(\mathbf{K}\otimes\ell^2\left(\mathbb{Z}\right)^{\otimes m}\otimes\ell^2\left(\mathbb{N}_0\right)^{\otimes n}\right)$ such that:
        \begin{enumerate}[(i)]
        \item For each $1\leq i<j\leq m+n$, $\pi\left(u_{ij}\right) = U_{ij}\otimes I$.
        \item For each $1\leq i\leq m+n$, $\pi\left(s_i\right) = F_i\otimes S^*\otimes I$.
        \end{enumerate}
        \end{thm}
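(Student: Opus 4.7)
The plan is to construct $\pi$ by universality and then deduce injectivity by reducing, via Proposition \ref{injective representation}, to showing that for each $x$ in the dense set $D_0 := X \cap \Omega_{m+n}^*$ the fiber map $\pi_x : C_X^{m,n} \to C_x^{m,n}$ factors through $\pi$. The argument follows the blueprint of Theorem \ref{F.rep 0,n}, but the unitary part $s_1,\ldots,s_m$ acting on the $\ell^2(\mathbb{Z})$-slots forces us to use Corollary \ref{faithful homomorphism m,n} in place of the simpler Lemma \ref{Surjective to bijective}, which is where the nondegeneracy hypothesis does essential work.

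First, by direct evaluation on product basis vectors $k \otimes e_{n_1} \otimes \cdots \otimes e_{n_m} \otimes e_{l_1} \otimes \cdots \otimes e_{l_n}$, I would verify that the operators $\overline{u_{ij}} := U_{ij} \otimes I$ and $\overline{s_l} := F_l \otimes S^* \otimes I^{\otimes(m+n-l)}$ on $\mathcal{H}_0 := \mathbf{K} \otimes \ell^2(\mathbb{Z})^{\otimes m} \otimes \ell^2(\mathbb{N}_0)^{\otimes n}$ satisfy all the defining relations of $C_X^{m,n}$. The twist $\prod_{r<j} U_{rj}^{-n_r}$ built into $F_j$ is designed precisely to produce the doubly twisted relation $\overline{s_i}^* \overline{s_j} = \overline{u_{ij}}^* \overline{s_j} \overline{s_i}^*$; unitarity of $\overline{s_l}$ for $l \leq m$ is because $S^*$ is the bilateral shift on $\ell^2(\mathbb{Z})$, isometry for $l > m$ because $S^*$ is the unilateral shift on $\ell^2(\mathbb{N}_0)$, and commutation with $\overline{u_{ij}}$ is immediate. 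Since $\sigma(\mathcal{U}) = X$, universality produces a $^*$-homomorphism $\pi : C_X^{m,n} \to \mathcal{L}(\mathcal{H}_0)$ satisfying (i) and (ii).

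For injectivity, denseness of $D_0$ in $X$ permits the application of Proposition \ref{injective representation}: it suffices to show $\pi_x$ factors through $\pi$ for each $x \in D_0$. Fix $x \in D_0$, and imitate the diagram of Theorem \ref{F.rep 0,n}: set $C := \pi[C_X^{m,n}]$, $\overline{\mathcal{I}_x} := \pi[\mathcal{I}_x]$, and let $q : C \to C/\overline{\mathcal{I}_x}$ be the quotient. The elements $q(\overline{u_{ij}}) = x_{ij}I$ and $q(\overline{s_l})$ satisfy the relations of $C_x^{m,n}$, so universality yields a surjective $^*$-homomorphism $\iota : C_x^{m,n} \to C/\overline{\mathcal{I}_x}$ with $\iota \circ \pi_x = q \circ \pi$. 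Establishing that $\iota$ is an isomorphism then gives the desired factorization.

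To prove $\iota$ is an isomorphism I would invoke Corollary \ref{faithful homomorphism m,n}, which demands (a) $\iota(\mathcal{P}) \neq 0$ for $\mathcal{P} = \prod_{i=m+1}^{m+n}(I - s_i s_i^*)$, and (b) $\iota|_{D}$ injective for $D = C^*(\{\mathcal{P}s_1, \ldots, \mathcal{P}s_m\})$. For (a), the argument transfers verbatim from the $m=0$ case: since each $F_i$ is unitary and $S^*$ is unitary on the $\ell^2(\mathbb{Z})$-slots while $I - S^*S = p$ on the $\ell^2(\mathbb{N}_0)$-slots, $\prod_{i=m+1}^{m+n}(I - \overline{s_i}\overline{s_i}^*)$ reduces to $I_\mathbf{K} \otimes I^{\otimes m} \otimes p^{\otimes n}$; writing this as $\sum c_{ij}(\overline{u_{ij}} - x_{ij}I)$ and compressing by $E : k \mapsto k \otimes e_0^{\otimes(m+n)}$ and its adjoint $Q$ yields $\sum (Qc_{ij}E)(U_{ij} - x_{ij}I) = I_\mathbf{K}$, contradicting $x \in \sigma(\mathcal{U}) = X$. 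The main obstacle is (b), where the hypothesis $x \in \Omega_{m+n}^*$ becomes essential (it was not needed when $m=0$). The algebra $D$ is the corner by $\mathcal{P}$ of the $C^*$-subalgebra of $C_x^{m,n}$ generated by the unitaries $s_1,\ldots,s_m$ satisfying the twisted commutation $s_i s_j = x_{ij} s_j s_i$, and through $\iota$ it is realized on a compression of $\mathbf{K} \otimes \ell^2(\mathbb{Z})^{\otimes m}$. I would then invoke Theorem 2.7 of \cite{BhaSau-2023aa}, which under the nondegeneracy hypothesis ensures faithfulness of the standard concrete representation of $C_x^{m,n}$, and identify $\iota|_D$ with such a representation (tracking how $\overline{\mathcal{I}_x}$ cuts the $\mathbf{K}$-factor down to the fiber where $U_{ij} \mapsto x_{ij}$); this is the delicate step. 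With (a) and (b) in hand, Corollary \ref{faithful homomorphism m,n} yields that $\iota$ is an isomorphism, completing the proof.
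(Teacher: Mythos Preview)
Your overall strategy matches the paper's proof exactly: construct $\pi$ by universality, then invoke Proposition \ref{injective representation} over the dense set $D_0 = X \cap \Omega_{m+n}^*$, build $\iota : C_x^{m,n} \to C/\overline{\mathcal{I}_x}$ with $\iota \circ \pi_x = q \circ \pi$, and verify the two hypotheses of Corollary \ref{faithful homomorphism m,n}. Your proof of hypothesis (a), that $\iota(\mathcal{P}) \neq 0$, via the compression $E,Q$ is identical to the paper's.

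The gap is your treatment of hypothesis (b), the injectivity of $\iota|_D$. You propose to ``invoke Theorem 2.7 of \cite{BhaSau-2023aa} \ldots and identify $\iota|_D$ with such a representation,'' conceding that ``this is the delicate step.'' But $\iota|_D$ maps into the abstract quotient $C/\overline{\mathcal{I}_x}$, not into a concrete Hilbert space representation, and you give no mechanism for carrying out the identification you describe; the remark about how $\overline{\mathcal{I}_x}$ ``cuts the $\mathbf{K}$-factor down'' is suggestive but not an argument. The paper avoids this difficulty entirely with a much simpler device: since the commuting unitaries $\mathcal{P}s_1,\ldots,\mathcal{P}s_m$ satisfy the relations of the noncommutative $m$-torus $A_{\Theta_0}$ (where $\Theta_0$ is the upper $m\times m$ block of $\Theta$), there is a surjection $\rho : A_{\Theta_0} \to D$. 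Nondegeneracy of $\Theta$ gives nondegeneracy of $\Theta_0$, so $A_{\Theta_0}$ is simple by Slawny's theorem; hence the nonzero homomorphism $\iota|_D \circ \rho$ is injective, and surjectivity of $\rho$ forces $\iota|_D$ injective. This is a one-line argument once you realize $D$ is a quotient of a simple algebra, and it is exactly where the nondegeneracy hypothesis enters. You should also note, as the paper does, that when $n=0$ the whole of $C_x^{m,0}$ is already simple and $\iota$ is injective immediately, bypassing Corollary \ref{faithful homomorphism m,n}.
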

        
        \begin{proof}
        Let $\mathcal{H}_0 = \mathbf{K} \otimes \ell^2\left(\mathbb{Z}\right)^{\otimes m} \otimes \ell^2\left(\mathbb{N}_0\right)^{\otimes n}$. Define
$$\overline{u_{ij}}= U_{ij}\otimes I^{\otimes (m+n)},\, 1\leq i<j\leq m+n, \text{ and } \overline{s_i} = F_i\otimes S^*\otimes I^{\otimes (m+n-i)},\, 1\leq i\leq m+n.$$ 
Then, by direct computation on the standard basis vectors $k \otimes e_{n_1} \otimes \cdots \otimes e_{n_m} \otimes e_{m_1} \otimes \cdots \otimes e_{m_n}$ of $\mathcal{H}_0$, where $k \in \mathbf{K}$, $n_1,\dots,n_m \in \mathbb{Z}$, and $m_1,\dots,m_n \in \mathbb{N}_0$, it follows that the elements $\overline{u_{ij}}$ for $1\leq i<j\leq m+n$, and $\overline{s_i}$ for $1\leq i\leq m+n$ satisfy the defining relations of $C_X^{m,n}$. Hence, there exists a unique $^*$-homomorphism $\pi : C_X^{m,n} \to \mathcal{L}(\mathcal{H}_0)$ such that $\pi(u_{ij}) = \overline{u_{ij}}$ and $\pi(s_i) = \overline{s_i}$ for all relevant $i, j$. By Proposition~\ref{injective representation}, to show that $\pi$ is injective, it suffices to verify that for each $x \in X\cap \Omega_{m+n}$, the map $\pi_x : C_X^{m,n} \to C_x^{m,n}$ factors through $\pi$.

Let $x \in X\cap\Omega_{m+n}$, and let $x=e^{2\pi\mathrm{i}\Theta}$ where $\Theta$ is nondegenerate. Consider the proper two-sided ideal $\mathcal{I}_x:= \sum_{1\leq i<j\leq m+n} C_X^{m,n}(u_{ij} - x_{ij} I)$ of $C_X^{m,n}$. Then $\overline{\mathcal{I}_x} := \pi\left[\mathcal{I}_x\right]$ is a proper two-sided ideal in $C := \pi\left[C_X^{m,n}\right]$, since $x \in \sigma(\overline{u})$. Let $q: C \to C/\overline{\mathcal{I}_x}$ denote the quotient map. Then, the images
$$q(\overline{u_{ij}}) = x_{ij} I,\,1\leq i<j\leq m+n, \quad q(\overline{s_i}) =: \widetilde{s_i},\, 1\leq i\leq m+n$$
satisfy the defining relations of $C_x^{m,n}$. Thus, there exists a unique $^*$-homomorphism
\[
\iota : C_x^{m,n} \to C/\overline{\mathcal{I}_x}
\]
with $\iota(s_i) = q(\overline{s_i})$ for $1\leq i\leq m+n$. By construction, we have $\iota \circ \pi_x = q \circ \pi$,  which in particular shows that $\iota$ is surjective.

\medskip
\noindent\textbf{Case 1:} $n = 0$. Then, by \cite[Theorem 3.7]{Sla-1972aa}, the algebra $C_x^{m,0}$ is simple, since $\Theta$ is nondegenerate. Hence, $\iota$ is injective.

\medskip
\noindent\textbf{Case 2:} $n\neq 0$. Let $\mathcal{P}= \prod_{i=m+1}^{m+n}\left(I-s_i s_i^*\right)$. Suppose $\iota\left(\mathcal{P}\right) =0$. Then
        $$q\left(\prod_{i=m+1}^{m+n}\left(I-\overline{s_i}\;\overline{s_i}^*\right)\right) = \iota\circ\pi_x\left(\prod_{i=m+1}^{m+n}\left(I-s_i s_i^*\right)\right) = 0,$$
        which implies that $\displaystyle\prod_{i=m+1}^{m+n}\left(I-\overline{s_i}\;\overline{s_i}^*\right) \in\overline{\mathcal{I}_x}$. Since, each $F_i$ is a unitary operator, we have
        $$\prod_{i=m+1}^{m+n}\left(I-\overline{s_i}\;\overline{s_i}^*\right) = I\otimes \left(I-S^*S\right)^{\otimes n} = I\otimes p^{\otimes n} \in \overline{\mathcal{I}_x},$$
        so there exists $c_{ij}\in C$ for $1\leq i<j\leq m+n$ such that $\sum_{1\leq i<j\leq m+n} c_{ij}\left(\overline{u_{ij}} - x_{ij} I\right) = I\otimes p^{\otimes n}$.
        
        Define operators $E:\mathbf{K} \to \mathcal{H}_0$, and $Q: \mathcal{H}_0 \to \mathbf{K}$ by $E(k) = k\otimes e_0^{\otimes (m+n)}$ and
        $$Q\left(k\otimes e_{n_1}\otimes\cdots\otimes e_{n_m}\otimes e_{m_1}\otimes\cdots\otimes e_{m_n}\right) = \left(\delta_{n_1}\cdots\delta_{n_m}\right)\left(\delta_{m_1}\cdots\delta_{m_n}\right)k,$$
 respectively, for $k\in\mathbf{K}$, $n_1,\ldots, n_m\in\mathbb{Z}$, and $m_1,\ldots, m_n\in\mathbb{N}_0$. Evaluating on the elements of the form $k\otimes e_{n_1}\otimes\cdots\otimes e_{n_m}\otimes e_{m_1}\otimes\cdots\otimes e_{m_n}$, $k\in\mathbf{K}$, $n_1,\ldots,n_m\in\mathbb{Z}$, and $m_1,\ldots, m_n\in\mathbb{N}_0$, it follows that
        $$\sum_{1\leq i<j\leq m+n} \left(Qc_{ij} E\right)\left(U_{ij} - x_{ij} I\right) = I,$$
       This contradicts $x \in \sigma(\mathcal{U})$. Hence $\iota(\mathcal{P}) \neq 0$.

Let $B$ be the $C^*$-subalgebra of $C_x^{m,n}$ generated by $\mathcal{P} s_1, \dots, \mathcal{P} s_m$, and define $x_0 = \langle x_{ij} \rangle_{1 \leq i < j \leq m}$, with corresponding $\Theta_0 := \left\langle \theta_{ij}\right\rangle_{1\leq i<j\leq m}$. Then, there exists a surjective $^*$-homomorphism $\rho$ from the Higher dimensional noncommutative torus $A_{\Theta_0}$ to $B$. Since $\Theta_0$ is nondegenerate, it follows from \cite[Theorem 3.7]{Sla-1972aa} that $A_{\Theta_0}$ is simple. As $\iota|_B \circ \rho$ is a nonzero $^*$-homomorphism, it must be injective. Thus, $\iota|_B$ is injective, and by Corollary~\ref{faithful homomorphism m,n}, $\iota$ is injective. 

Therefore, $\iota$ is an isomorphism, and we have $\pi_x = \iota^{-1} \circ q \circ \pi$. Hence $\pi$ is injective. This completes the proof.
        \end{proof}
        
        \begin{rmrk}
      	Let $m, n \in \mathbb{N}_0$ with $m + n \geq 2$, and let
      	$$
      	\mathcal{S} := \left\{ \Theta = \left\langle \theta_{ij} \right\rangle_{1 \leq i < j \leq m+n} \in \mathbb{R}^{\binom{m+n}{2}} : \theta_{ij} \text{ are } \mathbb{Q}\text{-linearly independent} \right\}.
      	$$
      	Then $\left\{ e^{2\pi \mathrm{i} \Theta} : \Theta \in \mathcal{S} \right\} \subseteq \Omega_{m+n}^*$. By the multidimensional Kronecker approximation theorem, the set on the left is dense in $\mathbb{T}^{\binom{m+n}{2}}$. Hence $\Omega_{m+n}^*$ is dense in $\mathbb{T}^{\binom{m+n}{2}}$, and Theorem~\ref{F.rep m,n} in particular yields a faithful representation of $C_{X}^{m,n}$, where $X=\mathbb{T}^{\binom{m+n}{2}}$.
      	\end{rmrk}

        \begin{rmrk}
        Let $n\geq 2$. Let $X \subseteq \mathbb{T}^{\binom{n}{2}}$ be a closed set, and write each $x \in X$ as $x = \langle x_{ij} \rangle_{1 \leq i < j \leq n}$. For each $1 \leq i < j \leq n$, define the unitary operator
$$
U_{ij} : L^2(X) \to L^2(X), \quad (U_{ij} f)(x) = x_{ij} f(x),
$$
for all $f \in L^2(X)$ and $x \in X$. Then we have $\sigma(\mathcal{U}) = X$, where $\mathcal{U} = (U_{ij})_{1 \leq i < j \leq n}$.
        \end{rmrk}
        
        \begin{crlre}\label{F.rep.C2}
        	Let $X$ be a closed subset of $\mathbb{T}$, and let $\left\{x_m : m \in \mathbb{Z}\right\}$ be a dense subset of $X$. Define the operators $D: \ell^2(\mathbb{Z}) \to \ell^2(\mathbb{Z})$ and $f: \ell^2(\mathbb{Z}) \otimes \ell^2(\mathbb{N}_0) \to \ell^2(\mathbb{Z}) \otimes \ell^2(\mathbb{N}_0)$ by
        	\[
        	D(e_m) = x_m e_m \quad \text{and} \quad f(e_m \otimes e_n) =  \overline{x_m}^n e_m \otimes e_n,
        	\]
        	for $m \in \mathbb{Z}$ and $n \in \mathbb{N}_0$. Then, there exists a unique faithful $^*$-representation
        	\[
        	\pi: C_X^{0,2} \to \mathcal{L}\left(\ell^2(\mathbb{Z}) \otimes \ell^2(\mathbb{N}_0) \otimes \ell^2(\mathbb{N}_0)\right)
        	\]
        	such that
        	\[
        	\pi(u) = D \otimes I \otimes I, \quad \pi(s_1) = I \otimes S^* \otimes I, \quad \text{and} \quad \pi(s_2) = f \otimes S^*.
        	\]
        \end{crlre}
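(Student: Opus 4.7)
The plan is to deduce this corollary as a concrete instance of Theorem \ref{F.rep 0,n} applied with $n=2$ and a carefully chosen auxiliary Hilbert space $\mathbf{K}$ carrying a single unitary whose spectrum recovers $X$.

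First, I would take $\mathbf{K} = \ell^2(\mathbb{Z})$ and declare the single ``commuting tuple'' of unitaries to be $\mathcal{U}=(U_{12})$ with $U_{12}:=D$. Since $D$ is diagonal with eigenvalues $\{x_m : m\in \mathbb{Z}\}$ (each of modulus one) and $\{x_m\}$ is dense in $X$, the spectrum of $D$ equals $\overline{\{x_m : m \in \mathbb{Z}\}}=X$; this is exactly the hypothesis $\sigma(\mathcal{U})=X$ needed by Theorem \ref{F.rep 0,n}.

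Next, I would unpack the operators $F_i$ of Theorem \ref{F.rep 0,n} in this setup. Directly from the definition, $F_1 = I_{\mathbf{K}} = I$ on $\ell^2(\mathbb{Z})$, and $F_2$ is the operator on $\mathbf{K}\otimes \ell^2(\mathbb{N}_0)$ given by
\[
F_2(e_m \otimes e_n) \;=\; U_{12}^{-n}\, e_m \otimes e_n \;=\; x_m^{-n}\, e_m \otimes e_n \;=\; \overline{x_m}^{\,n}\, e_m \otimes e_n,
\]
using $x_m \in \mathbb{T}$. This is precisely the operator $f$ in the statement of the corollary. Hence Theorem \ref{F.rep 0,n} produces a faithful representation $\pi$ on $\mathbf{K}\otimes \ell^2(\mathbb{N}_0)^{\otimes 2} = \ell^2(\mathbb{Z})\otimes \ell^2(\mathbb{N}_0)\otimes \ell^2(\mathbb{N}_0)$ with
\[
\pi(u_{12}) = D\otimes I\otimes I,\quad \pi(s_1) = F_1\otimes S^*\otimes I = I\otimes S^*\otimes I,\quad \pi(s_2) = F_2\otimes S^* = f\otimes S^*,
\]
which matches the corollary on the nose.

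Finally, uniqueness of such a $\pi$ is immediate from the universal property of $C_X^{0,2}$: any $^*$-homomorphism is determined by its values on the generators $u_{12}, s_1, s_2$. There is no substantive obstacle here; the only point requiring any care is the identification of $F_2$ with $f$ via the sign convention $x_m^{-n}=\overline{x_m}^{\,n}$ and the verification that $\sigma(D)=X$, both of which are one-line checks.
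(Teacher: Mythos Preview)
Your proposal is correct and follows exactly the same approach as the paper, which simply states that the result follows from Theorem~\ref{F.rep 0,n} by taking the unitary $U_{12}$ to be the diagonal operator $D$ on $\ell^2(\mathbb{Z})$. In fact you spell out more of the verification (that $\sigma(D)=X$ and $F_2=f$) than the paper does.
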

        
        \begin{proof}
        	The result follows from Theorem \ref{F.rep 0,n} by setting $U$ as the given unitary operator on $\ell^2\left(\mathbb{Z}\right)$.
        \end{proof}
        
        \begin{crlre}\label{F.rep.C3}
        	Let $\theta\in\mathbb{R}\setminus\mathbb{Q}$. Define the operators $D: \ell^2\left(\mathbb{Z}\right)\to \ell^2\left(\mathbb{Z}\right)$ and $f:\ell^2\left(\mathbb{Z}\right)\otimes\ell^2\left(\mathbb{N}_0\right)\to \ell^2\left(\mathbb{Z}\right)\otimes\ell^2\left(\mathbb{N}_0\right)$ by
        	$$D\left(e_m\right) = e^{2\pi\mathrm{i}m\theta} e_m \text{ and } f(e_m\otimes e_n)= e^{-2\pi\mathrm{i}mn\theta} e_m\otimes e_n,$$
        	for $m\in\mathbb{Z}$ and $n\in\mathbb{N}_0$. Then, there exists a unique faithful $^*$-representation 
        	$$\pi : C_{\mathbb{U}}^{0,2}\to \mathcal{L}\left(\ell^2\left(\mathbb{Z}\right)\otimes\ell^2\left(\mathbb{N}_0\right)\otimes \ell^2\left(\mathbb{N}_0\right)\right)$$
        	such that 
        	\[
        	\pi(u) = D \otimes I \otimes I, \quad \pi(s_1) = I \otimes S^* \otimes I, \quad \text{and} \quad \pi(s_2) = f \otimes S^*.
        	\]
        \end{crlre}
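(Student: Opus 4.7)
The plan is to derive this corollary as a direct specialization of Corollary \ref{F.rep.C2}.

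First, I would take $X := \mathbb{T}$, which is a closed subset of $\mathbb{T}$, and set $x_m := e^{2\pi\mathrm{i}m\theta}$ for every $m \in \mathbb{Z}$. Under this substitution, the operator $D$ appearing in Corollary \ref{F.rep.C2} becomes $D(e_m) = x_m e_m = e^{2\pi\mathrm{i}m\theta} e_m$, matching the $D$ of the current statement; similarly $\overline{x_m}^n = e^{-2\pi\mathrm{i}mn\theta}$, so the operator $f$ in Corollary \ref{F.rep.C2} coincides with the $f$ defined here. Also $C_X^{0,2} = C_{\mathbb{T}}^{0,2} = C_{\mathbb{U}}^{0,2}$ by the notational convention fixed in Remark \ref{convention}(ii), since for $X = \mathbb{T}$ one has $\sigma(u) = \mathbb{T}$.

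The only non-formal hypothesis of Corollary \ref{F.rep.C2} that requires checking is that $\{x_m : m \in \mathbb{Z}\}$ is dense in $X = \mathbb{T}$. Since $\theta \in \mathbb{R}\setminus\mathbb{Q}$, this is precisely the classical one-dimensional Kronecker approximation theorem (equivalently, Weyl's equidistribution theorem): the orbit $\{m\theta \bmod 1 : m \in \mathbb{Z}\}$ is dense in $\mathbb{R}/\mathbb{Z}$, hence $\{e^{2\pi\mathrm{i}m\theta} : m \in \mathbb{Z}\}$ is dense in $\mathbb{T}$.

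With these identifications in place, Corollary \ref{F.rep.C2} produces a unique faithful $^*$-representation $\pi : C_{\mathbb{U}}^{0,2} \to \mathcal{L}(\ell^2(\mathbb{Z}) \otimes \ell^2(\mathbb{N}_0) \otimes \ell^2(\mathbb{N}_0))$ satisfying $\pi(u) = D \otimes I \otimes I$, $\pi(s_1) = I \otimes S^* \otimes I$, and $\pi(s_2) = f \otimes S^*$, which is exactly the required conclusion. There is no serious obstacle here; the entire content is the Kronecker denseness of the orbit $\{e^{2\pi\mathrm{i}m\theta}\}_{m \in \mathbb{Z}}$, and once this is invoked the corollary follows formally from the preceding one.
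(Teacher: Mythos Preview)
Your proof is correct and follows exactly the same approach as the paper's own proof, which simply observes that $\{e^{2\pi\mathrm{i}m\theta} : m \in \mathbb{Z}\}$ is dense in $\mathbb{T}$ and invokes Corollary~\ref{F.rep.C2}. One minor point: the notational convention $C_{\mathbb{U}}^{0,2} = C_X^{0,2}$ for $X=\mathbb{T}$ is stated in the remark immediately following Definition~\ref{main definition}, not in Remark~\ref{convention}.
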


        \begin{proof}
        	Since $\left\{e^{2\pi\mathrm{i}m\theta} : m \in \mathbb{Z}\right\}$ is dense in $\mathbb{T}$, the result follows directly from Corollary \ref{F.rep.C2}.
        \end{proof}
        
        Let $\mathcal{U}=\{U_{ij}:1\leq i <j \leq m+n\}$ be a tuple of commuting unitaries acting on $\clh$.  Let $X$ be the joint spectrum of $\mathcal{U}$.  The question is; is there a tuple $\mathcal{T}=(T_1,T_2,\cdots , T_{m+n})$ of $\mathcal{U}$-doubly twisted $(m,n)$-isometries  on $\clh$  such that the canonical  surjective homomorphism  $\phi:C_{X}^{m,n} \rightarrow C^*(\mathcal{T})$ sending $u_{ij} \mapsto U_{ij}$ and $s_i\mapsto T_i$ is an isomorphism?  If such a tuple exists then we call it a universal tuple of $\mathcal{U}$-twisted isometries and in such a case, $C^*(\mathcal{T})$ is denoted by  $C_{\mathcal{U}}^{m,n}$.  
        \bxmpl 
        \begin{enumerate}[(i)]
        	\item    Let $\clh = \bbc$ and $U=e^{2\pi \mathrm{i}  \theta}$.  If $\theta \notin \bbz$, then there does not exist any pair of isometries satisfying the defining relations of $C_{\mathbb{U}}^{0,2}$. Hence in this case, $C_{\mathcal{U}}^{0,2}$ does not exist. 
        	\item Fix $q \in \bbn$ and $\theta =\frac{p}{q}$ with $(p,q)=1$. Let $\clh = \bbc^q$ and $U=e^{2\pi \mathrm{i}  \theta}I_q$.  Then there are infinitely many pairs of unitaries satisfying (\ref{R2}). However, $C_{\mathcal{U}}^{2,0}$ does not exist.
        \end{enumerate}
        \exmpl 
        Our objective is to find a necessary and sufficient conditions on $\mathcal{U}$ under which a universal tuple of $\mathcal{U}$-doubly twisted $m,n)$-isometries. We begin with a lemma.
        \blmma 
        Let $\mathcal{U}=\{U_{ij}:1\leq i <j \leq m+n\}$ be a tuple of commuting unitaries defined on a infinite dimensional separable Hilbert space $\clh$. Let $X$ be the joint spectrum of $\mathcal{U}$.  Let $(T_1,T_2,\cdots T_{m+n})$ be a universal tuple of $\mathcal{U}$-doubly twisted $(m,n)$-isometries. Then we have 
        $$\sigma^{I_{m+n}\setminus I_n}(\mathcal{U})=X.$$
        \elmma 
        \prf  Let  $A_0= I_{m+n}\setminus I_n$. Let $C=C^*(\{T_i: 1\leq i\leq m+n\})$.  By von-Nuemann Wold decomposition, we have 
        $$T_i=\oplus_{A\subset A_0}T_i^A, \, \mbox{ where } T_i^A=\restr{T_i}{\clh_A} \mbox{ for all } A \subset A_0\mbox{ and } 1\leq i \leq m+n.$$
        Let  $C^A=C^*(\{T_i^A: 1\leq i\leq m+n\})$.   Let $\mathcal{I}$ be the ideal of $C$ generated by $\prod_{i=m+1}^{m+n}(1-T_iT_i^*)$ and $\mathcal{I}^{A_0}$ be the ideal  of $C^{A_0}$ generated by $\prod_{i=m+1}^{m+n}(1-T_i^{A_0}
        (T_i^{A_0})^*)$.
        Then we have
        $$\mathcal{I}=\mathcal{I}^{A_0}\oplus 0 \oplus 0 \cdots 0,$$
        Therefore, $\widehat{\mathcal{I}}\cong \widehat{\mathcal{I}^{A_0}}$. In fact, from \cite{Arv-1976aa}, it follows that  any irreducible  representation $\pi$ of $C$ which is not vanishing on $\mathcal{I}$  will be given by an irreducible representation $\pi^{A_0}$ of $C^{A_0}$ in the following way.
        $$\pi(T_i)=\pi^{A_0}(T_i^{A_0}), \, \mbox{ and } \, \pi(U_{ij})=\pi^{A_0}(U_{ij}^{A_0}).$$
        This shows that  in any such irreducible representation of $C$, we have  $\{\pi(U_{ij})\} \in \sigma^{A_0}.$ 
        
      Now, take  $ \Theta \in X$. Consider the irreducible representation  $\pi_{(I_m,\rho)}^{\Theta}$ of $C$.  Since $\pi_{(I_m,\rho)}^{\Theta}$  does not vanish on $\mathcal{I}$, it follows that $\Theta \in \sigma^{A_0}(\mathcal{U})$, proving that $X \subset \sigma^{A_0}(\mathcal{U})$.  Since $U_{ij}$ reduces $\clh^A$ for $A \subset A_0$, we have 
        $$X= \sigma(\mathcal{U})=\cup_{A\subset A_0} \sigma^A(\mathcal{U}) \supset \sigma^{A_0}(\mathcal{U}), $$
        which proves the claim. 
        \qed

       \bthm  \label{infinite multiplicity}
       Let $\mathcal{U}=\{U_{ij}:1\leq i <j \leq m+n\}$ be a tuple of commuting unitaries defined on a infinite dimensional separable Hilbert space $\clh$. Let $X$ be the joint spectrum of $\mathcal{U}$. If
      there exists a universal tuple of $\mathcal{U}$-doubly twisted $(m,n)$-isometries,  then 
      $$M_{\mathcal{U}}(X)=\infty,$$
      where $M_{\mathcal{U}}$ is the spectral muliplicity of  $\mathcal{U}$. Moreover, if $X\cap\Omega_{m+n}^*$ is dense in $X$, and $M_{\mathcal{U}}(X)=\infty$, then  a universal tuple of $\mathcal{U}$-twisted isometries exist.
       \ethm  
       \prf Let $\mathcal{T}=(T_1,\cdots , T_{m+n})$ be a universal tuple of $\mathcal{U}$-twisted isometries   such that $\mathcal{T}   \leadsto C_{X}^{m,n}$. 
       \textbf{Claim:} In the Wold-decomposition of $(T_1,\cdots , T_{m+n})$, the wandering space $W_{A_0}\neq \{0\}$, where $A_0=\{m+1,m+2, \cdots m+n\}$.\\
       \textbf{proof of the claim:} It is equivalent to show that $\prod_{i=m+1}^{m+n}(1 - T_i T_i^*) \neq 0$, and for that, it suffices to find a representation $\pi$ of $C^{m,n}_X$ such that $\pi\left(\prod{i=m+1}^{m+n}(1 - T_i T_i^*)\right) \neq 0$. This follows immediately from Theorem \ref{representations1}. \\
      \noindent Now we have 
      $$\clh_A=\bigoplus_{(r_{m+1},\cdots r_{m+n})\in \bbn_0^n}\, \prod_{i=m+1}^{m+n}T_i^{r_i}W_A\cong (\ell^2(\bbn_0))^{\otimes n}\otimes W_A.$$
      Moreover, $U_{ij}^A$ reduces $W_A$  and the joint spectrum of $\mathcal{U}^A$ is $X$. This shows that  $M_{\mathcal{U}}(X)=\infty$. To prove the converse,  assume that $M_{\mathcal{U}}(X)=\infty$. Then by the Hahn-Hellinger theorem, there exists a measure $(X, B_X, \mu_{\infty} )$ such that 
      $$ \clh \cong L^2(X) \otimes \ell^2(\bbn_0), \quad \mbox{ and } \quad  U_{ij} \cong M_{x_{ij}} \otimes 1,  \mbox { for } 1 \leq i <j \leq m+n.$$ Now the claim follows from Theorem \ref{F.rep m,n}. 
       \qed

       \bppsn \label{center fibre}
       Let  $m, n \in \bbn_0$ and $\Theta_{[m]} \in \Omega_m^*$. Then $C_{\Theta}^{m,n}$ is a primary $C^*$-algebra, and hence  $\mathcal{Z}(C_{\Theta}^{m,n})=\bbc\mathds{1}$.
       \eppsn 
       \prf Since $\Theta_{[m]}$  is nondegenerate, there exists a faithful irreducible representation $\rho$ of $\cla_{\Theta_{[m]}}^m$. By Theorem $2.4$ and Theorem $2.7$ in \cite{BhaSau-2023aa}, the representation $\pi_{(I_m,\rho)}^{\Theta}$ is irreducible and faithful, hence the claim. 
       \qed 
       
       \bthm  \label{center main} Let $X$ be a closed subset of $\bbbt^{m+n \choose 2}$ and let $A$ be a unital continuous $C(X)$-algebra with the structure map $\beta: C(X) \rightarrow \mathcal{Z}(A)$. Assume that there exists a dense set $\bigwedge$ such that $\mathcal{Z}(A_x)=\bbc\mathds{1}$ for $x \in \bigwedge$. Then one has
       $$\mathcal{Z}(A)=\beta(C(X)).$$
       \ethm 
       \prf Let $B=\beta(C(X))$.  Take  $T \in \mathcal{Z}(A)$. For $x \in X$, define $B_x=\pi_x(B)$. Since $\mathcal{Z}(A_x)=\bbc \mathds{1}$ for $x \in  \bigwedge$, it follows that $T_x \in B_x$. \\
      \textbf{Claim:} $T_x \in B_x, \mbox{ for all } x \in X$. \\
      \textbf{proof of the claim:} Since $ \bigwedge$ is dense in $X$, we get a sequence $\{x_n\}_{n=1}^{\infty}$ such that $x_n \rightarrow x$ as $ n \rightarrow \infty$.  Let $T_{x_n}=w_n\mathds{1}$.  Since $\pi_{x_n}$ is a homomorphism, we have $\|\pi_{x_n}\|\leq 1$ and hence $|w_n|=\|T_{x_n}\|\leq \|T\|$. By Bolzano Weierstrass Theorem, we get a  subsequence $\{w_{n_k}\}_{k=1}^{\infty}$ such that  $w_{n_k} \rightarrow w$ for some $w \in \bbc$ and the subspace topology on $\{w_{n_k}:1 \leq k < \infty\} $ is discrete.  Hence there exists $f \in C(X)$ such that 
      $$f(x_{n_k})=w_{n_k},\, \mbox{ for all } k \in \bbn.$$
      By continuous functional calculus, we have $f(\mathcal{U})\in B$ and $\pi_y(f(\mathcal{U}))=f(y)\mathds{1}$ for all $y \in X$. Thus, we have 
      $$\|\pi_x(T-f\mathds{1})\|=\lim_{k \rightarrow \infty} \|\pi_{x_{n_k}}(T-f\mathds{1})\|=\lim_{k \rightarrow \infty} \|T_{x_{n_k}}-w_{n_k}\mathds{1}\|=0.$$
      This proves that $T_x=f(x)\mathds{1}=w\mathds{1}\in B_x$.   \\
      Using this, together with part(iii) of  Lemma $2.1$ in \cite{Dad-2009aa}, we conclude  that $T \in B$, thus proving the result.
      \qed 
      \bcrlre Let $m,n \in \bbn_0$ with $m+n\geq 2$. Then one has the following. 
      $$\mathcal{Z}(C_X^{m,n})=C^*(\{u_{ij}:1\leq i <j \leq m+n\}).$$
      \ecrlre
      \prf 
      By Proposition \ref{continuous algebra}, it follows that $C_X^{m,n}$ is a unital  continuous $C(X)$-algebra with fibre $\pi_{\Theta}(C_X^{m,n})=C_{\Theta}^{m,n}$. Moreover,  from Proposition \ref{center fibre}, one can conclude  that $\mathcal{Z}(C_{\Theta}^{m,n})=\bbc\mathds{1}$, provided $\Theta \in \Omega_{m+n}^*$. Since  $ \Omega_{m+n}^*$ is  dense in $\Omega_{m+n}$, we get the claim from Theorem \ref{center main}. 
      \qed
       
       \section{$K$-groups of $C_{\mathbb{U}}^{m,n}$}\label{K-theory}
       In this section, we compute the $K$-groups of the $C^*$-algebra $C_{\mathbb{U}}^{m,n}$ for all $m,n \in \mathbb{N}_0$ with $m+n \geq 1$.
       
       \begin{dfn}\label{inducing unitary}
       	Let $m,n \in \mathbb{N}_0$ with $m+n \geq 1$. Define the universal $C^*$-algebra $A_{\mathbb{U}}^{m,n}$ as follows:
       	
       	\begin{itemize}
       		\item If $(m,n) = (1,0)$, then $A_{\mathbb{U}}^{m,n}$ is generated by elements $s_1$ and $u_{12}$ satisfying
       		$$
       		s_1^* s_1 = s_1 s_1^* = u_{12}^* u_{12} = u_{12} u_{12}^* = 1, \quad s_1 u_{12} = u_{12} s_1.
       		$$
       		
       		\item If $(m,n) = (0,1)$, then $A_{\mathbb{U}}^{m,n}$ is generated by elements $s_1$ and $u_{12}$ satisfying
       		$$
       		s_1^* s_1 = u_{12}^* u_{12} = u_{12} u_{12}^* = 1, \quad s_1 u_{12} = u_{12} s_1.
       		$$
       		
       		\item If $m+n > 1$, then $A_{\mathbb{U}}^{m,n}$ is generated by elements $s_1, \ldots, s_{m+n}$ and a set\linebreak $\mathbb{U} := \{ u_{ij} : 1 \leq i < j \leq m+n+1 \}$
       		satisfying
       		\begin{itemize}
       			\item[$\mathrm{(i)}$] $s_i s_i^* = 1$ for $1 \leq i \leq m$, and $s_i^* s_i = 1$ for $1 \leq i \leq m+n$,
       			\item[$\mathrm{(ii)}$] $u_{ij}^* u_{ij} = u_{ij} u_{ij}^* = 1$ for $1 \leq i < j \leq m+n+1$,
       			\item[$\mathrm{(iii)}$] $u_{ij} s_k = s_k u_{ij}$ for $1 \leq i < j \leq m+n+1$, $1 \leq k \leq m+n$,
       			\item[$\mathrm{(iv)}$] $u_{ij} u_{pq} = u_{pq} u_{ij}$ for $1 \leq i < j \leq m+n+1$, $1 \leq p < q \leq m+n+1$,
       			\item[$\mathrm{(v)}$] $s_i^* s_j = u_{ij}^* s_j s_i^*$ for $1 \leq i < j \leq m+n$.
       		\end{itemize}
       	\end{itemize}
       \end{dfn}
       
       For the identity automorphism $\id$ on the algebra $C_{\mathbb{U}}^{m,n}$, by \cite[Corollary~9.4.23]{GioKerPhiTom-2017aa}, $C_{\mathbb{U}}^{m,n} \otimes C(\mathbb{T}) = C_{\mathbb{U}}^{m,n} \rtimes_{\id} \mathbb{Z}$ is the universal $C^*$-algebra generated by $C_{\mathbb{U}}^{m,n}$ and a unitary $w$ satisfying $s_i^{m,n} w = w s_i^{m,n}$ for $1 \leq i \leq m+n$, and $u_{ij} w = w u_{ij}$ for $1 \leq i < j \leq m+n$. Iterating this construction and adjoining a unitary at each step, we obtain an isomorphism
       $$
       \tau: A_{\mathbb{U}}^{m,n} \longrightarrow C_{\mathbb{U}}^{m,n} \otimes C\left(\mathbb{T}\right)^{\otimes (m+n)}
       $$
       given by
       $$
       s_l^{m,n} \mapsto s_l^{m,n} \otimes 1^{\otimes (m+n)}, \quad u_{ij} \mapsto u_{ij} \otimes 1^{\otimes (m+n)}, \quad  u_{l,m+n+1} \mapsto 1^{\otimes l} \otimes z \otimes 1^{\otimes (m+n-l)},
       $$
       for $1 \leq l \leq m+n$ and $1 \leq i < j \leq m+n$, where $z$ denotes the identity function $z \mapsto z$, $z \in \mathbb{T}$.
       
       Moreover, it is easy to verify for all $m,n \in \mathbb{N}_0$ with $m+n \geq 1$ that the map $\tau_{m,n} : A_{\mathbb{U}}^{m,n} \to A_{\mathbb{U}}^{m,n}$ defined by
       $$
       \tau_{m,n}\left(u_{ij}\right) = u_{ij} \quad \text{for } 1 \leq i < j \leq m+n+1, \quad \tau_{m,n}(s_i^{m,n}) =
       \begin{cases}
       	u_{i,m+n+1}^* s_i^{m,n} & \text{if } 1 \leq i \leq m, \\
       	u_{i,m+n+1} s_i^{m,n} & \text{if } m+1 \leq i \leq m+n.
       \end{cases}
       $$
       is a well-defined $^*$-automorphism of $A_{\mathbb{U}}^{m,n}$.

       \bppsn\label{cross product unitary}
       Let $m,n \in \mathbb{N}_0$ with $m+n\geq 1$. Then there exists a canonical $^*$-isomorphism $A_{\mathbb{U}}^{m,n} \rtimes_{\tau_{m,n}} \mathbb{Z} \cong C_{\mathbb{U}}^{m+1,n}$.
       \eppsn
       
       \prf
       By \cite[Corollary~9.4.23]{GioKerPhiTom-2017aa}, the crossed product $A_{\mathbb{U}}^{m,n} \rtimes_{\tau_{m,n}} \mathbb{Z}$ is the universal $C^*$-algebra generated by $A_{\mathbb{U}}^{m,n}$ and a unitary $w$ satisfying
       $$
       w s_i^{m,n} w^* = \tau_{m,n}(s_i^{m,n}) \quad \text{for } 1 \leq i \leq m+n, \quad \text{and} \quad w u_{ij} w^* = u_{ij} \quad \text{for } 1 \leq i < j \leq m+n+1.
       $$
       These relations coincide with those defining $C_{\mathbb{U}}^{m+1,n}$, where the additional generator $s_{m+1}^{m+1,n}$ corresponds to $w$. Define a map $\phi_{m,n} : A_{\mathbb{U}}^{m,n} \rtimes_{\tau_{m,n}} \mathbb{Z} \to C_{\mathbb{U}}^{m+1,n}$ on generators by
       $$
       \phi_{m,n}(w) = s_{m+1}^{m+1,n}, \quad
       \phi_{m,n}(s_i^{m,n}) =
       \begin{cases}
       	s_i^{m+1,n} & \text{if } 1 \leq i \leq m, \\
       	s_{i+1}^{m+1,n} & \text{if } m+1 \leq i \leq m+n,
       \end{cases}
       $$
       and
       $$
       \phi_{m,n}(u_{ij}) =
       \begin{cases}
       	u_{ij} & \text{if } 1 \leq i < j \leq m, \\
       	u_{i,m+1} & \text{if } 1 \leq i \leq m,\, j = m+n+1, \\
       	u_{i,j+1} & \text{if } 1 \leq i \leq m,\, m+1 \leq j \leq m+n, \\
       	u_{i+1,j+1} & \text{if } m+1 \leq i < j \leq m+n, \\
       	u_{m+1,i+1} & \text{if } m+1 \leq i \leq m+n,\, j = m+n+1.
       \end{cases}
       $$
       It is straightforward to verify that $\phi_{m,n}$ preserves all defining relations. By the universal property, $\phi_{m,n}$ extends to a $^*$-isomorphism.
       \qed
       
       \begin{ppsn}\label{K1 general 1}
       	Let $m \in \mathbb{N}$. Then the $K$-groups of the $C^*$-algebra $C_{\mathbb{U}}^{m,0}$ are given by
       	$$
       	K_0\left(C_{\mathbb{U}}^{m,0}\right) \cong K_1\left(C_{\mathbb{U}}^{m,0}\right) \cong \mathbb{Z}^{2^{\left(m + \binom{m}{2} - 1\right)}}.
       	$$
       \end{ppsn}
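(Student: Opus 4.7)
I would proceed by induction on $m$, combining Proposition~\ref{cross product unitary} with the Pimsner--Voiculescu six-term exact sequence and the K\"unneth formula. The base case $m=1$ is immediate: the convention after Definition~\ref{main definition} identifies $C_{\mathbb{U}}^{1,0}$ with $C(\mathbb{T})$, so $K_0 \cong K_1 \cong \mathbb{Z} = \mathbb{Z}^{2^{0}}$, matching the formula since $1 + \binom{1}{2} - 1 = 0$.

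For the inductive step, assume $K_0(C_{\mathbb{U}}^{m,0}) \cong K_1(C_{\mathbb{U}}^{m,0}) \cong \mathbb{Z}^{N_m}$ where $N_m := 2^{m+\binom{m}{2}-1}$. The isomorphism $\tau \colon A_{\mathbb{U}}^{m,0} \cong C_{\mathbb{U}}^{m,0} \otimes C(\mathbb{T})^{\otimes m}$ recorded before Proposition~\ref{cross product unitary}, together with the K\"unneth theorem (applicable since $C(\mathbb{T})^{\otimes m}$ is nuclear, lies in the UCT class, and the $K$-groups in sight are torsion-free), yields $K_0(A_{\mathbb{U}}^{m,0}) \cong K_1(A_{\mathbb{U}}^{m,0}) \cong \mathbb{Z}^{N_m \cdot 2^{m}}$, using $K_0(C(\mathbb{T}^m)) \cong K_1(C(\mathbb{T}^m)) \cong \mathbb{Z}^{2^{m-1}}$. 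Next, Proposition~\ref{cross product unitary} identifies $C_{\mathbb{U}}^{m+1,0}$ with $A_{\mathbb{U}}^{m,0} \rtimes_{\tau_{m,0}} \mathbb{Z}$, so I feed this into the Pimsner--Voiculescu sequence
\[
\cdots \to K_i(A_{\mathbb{U}}^{m,0}) \xrightarrow{\mathrm{id}-(\tau_{m,0})_{*}} K_i(A_{\mathbb{U}}^{m,0}) \to K_i(C_{\mathbb{U}}^{m+1,0}) \to K_{i-1}(A_{\mathbb{U}}^{m,0}) \to \cdots.
\]
Provided that $(\tau_{m,0})_{*}$ acts as the identity on $K_*(A_{\mathbb{U}}^{m,0})$, the sequence degenerates into short exact sequences
\[
0 \to K_i(A_{\mathbb{U}}^{m,0}) \to K_i(C_{\mathbb{U}}^{m+1,0}) \to K_{i-1}(A_{\mathbb{U}}^{m,0}) \to 0,
\]
which split since the outer groups are free abelian, giving $K_i(C_{\mathbb{U}}^{m+1,0}) \cong \mathbb{Z}^{2N_m \cdot 2^{m}}$. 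Since $\binom{m+1}{2}-\binom{m}{2} = m$, we have $N_m \cdot 2^{m+1} = N_{m+1}$, closing the induction.

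The main obstacle is verifying that $(\tau_{m,0})_{*} = \mathrm{id}$ on $K_*(A_{\mathbb{U}}^{m,0})$. By definition, $\tau_{m,0}$ fixes each $u_{ij}$ and sends $s_i \mapsto u_{i,m+1}^{*} s_i$, where $u_{i,m+1}$ is a generator of the central commutative tensor factor $C(\mathbb{T})^{\otimes m}$; under the K\"unneth decomposition, $\tau_{m,0}^{*}$ is upper-triangular with respect to the bigrading, so its effect on the $K$-groups demands careful bookkeeping. My plan is to select bases of the $K$-groups adapted to the filtration induced by the tensor factorization so that the triangular correction falls within a summand in which it can be absorbed, leveraging the fact that left multiplication by a unitary drawn from a commutative factor contributes only through its $K_1$-class. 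If direct triviality proves elusive, an alternative is to apply a Packer--Raeburn-style Morita equivalence to replace the twisted crossed product with an untwisted one over a larger base, after which the standard K\"unneth formula applies directly.
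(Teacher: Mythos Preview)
Your overall strategy matches the paper's exactly: induction on $m$, the identification $A_{\mathbb{U}}^{m,0}\cong C_{\mathbb{U}}^{m,0}\otimes C(\mathbb{T})^{\otimes m}$, then Proposition~\ref{cross product unitary} together with the Pimsner--Voiculescu sequence. (The paper computes $K_*(A_{\mathbb{U}}^{m,0})$ by iterating PV for $-\otimes C(\mathbb{T})\cong -\rtimes_{\mathrm{id}}\mathbb{Z}$ rather than invoking K\"unneth, but this is cosmetic and gives the same numbers.)

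The gap is exactly where you flag it: you do not actually prove that $(\tau_{m,0})_*=\mathrm{id}$ on $K_*(A_{\mathbb{U}}^{m,0})$, and both of your proposed routes---basis bookkeeping along a tensor filtration, or a Packer--Raeburn substitution---remain plans rather than arguments. The paper dispatches this step in one line by writing down a homotopy of $*$-automorphisms: for $t\in[0,1]$ set
\[
\tau_{m,0}^{(t)}(u_{ij})=u_{ij},\qquad \tau_{m,0}^{(t)}(s_i)=u_{i,m+1}^{-t}\,s_i\quad(1\le i\le m),
\]
so that $\tau_{m,0}^{(0)}=\mathrm{id}$ and $\tau_{m,0}^{(1)}=\tau_{m,0}$; homotopy invariance of $K$-theory then forces $(\tau_{m,0})_*=\mathrm{id}_*$, and the PV six-term sequence collapses precisely as you anticipated. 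In other words, the ``upper-triangular correction'' you worry about is null-homotopic: left multiplication by the central unitary $u_{i,m+1}^{*}$ is connected to the identity through the path $t\mapsto u_{i,m+1}^{-t}$ of central unitaries, so no explicit basis computation or Morita trick is needed.
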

       \prf
       We proceed by induction on $m$.
       
       \textbf{Base case:} For $m = 1$, we have $C_{\mathbb{U}}^{1,0} \cong C(\mathbb{T})$. Hence, $K_0\left(C_{\mathbb{U}}^{1,0}\right) \cong K_1\left(C_{\mathbb{U}}^{1,0}\right) \cong \mathbb{Z}$, which agrees with the formula.
       
       \textbf{Inductive step:} Assume the result holds for some $m \geq 1$, i.e.,
       $$
       K_0\left(C_{\mathbb{U}}^{m,0}\right) \cong K_1\left(C_{\mathbb{U}}^{m,0}\right) \cong \mathbb{Z}^{2^{\left(m + \binom{m}{2} - 1\right)}}.
       $$
       Consider $A_{\mathbb{U}}^{m,0} \cong C_{\mathbb{U}}^{m,0} \otimes C(\mathbb{T})^{\otimes m}$, and note that each tensor by $C\left(\mathbb{T}\right)$ is isomorphic to a crossed product by $\mathbb{Z}$ via the identity automorphism. Since the $K$-groups of $C_{\mathbb{U}}^{m,0}$ are free abelian, by repeatedly applying Pimsner-Voiculescu six-term exact sequence for the respective crossed product \cite[Theorem 2.4]{PimVoi-1980aa}, we obtain,
       
       $$K_{0}\left(A_{\mathbb{U}}^{m,0}\right) \cong K_{1}\left(A_{\mathbb{U}}^{m,0}\right)\cong K_0\left(C_{\mathbb{U}}^{m,0}\right)^{\oplus 2^{m-1}} \oplus K_1\left(C_{\mathbb{U}}^{m,0}\right)^{\otimes 2^{m-1}} = \mathbb{Z}^{2^{2m+\binom{m}{2}-1}}.$$
       
       Consider the crossed product $A_{\mathbb{U}}^{m,0} \rtimes_{\tau_{m,0}} \mathbb{Z}$ as in Proposition \ref{cross product unitary}, and the associated Pimsner--Voiculescu six-term exact sequence:
       
       \begin{center} 
       	\begin{tikzpicture}[node distance=1cm,auto]
       		\tikzset{myptr/.style={decoration={markings,mark=at position 1 with %
       					{\arrow[scale=2,>=stealth]{>}}},postaction={decorate}}}
       		\node (A){$ K_0\left(A_{\mathbb{U}}^{m,0}\right)$};
       		\node (Up)[node distance=2.5cm, right of=A][label=above:$\id_*-(\tau_{m,0})_{*}^{-1}$]{};
       		\node (B)[node distance=5cm, right of=A]{$K_0\left(A_{\mathbb{U}}^{m,0}\right)$};
       		\node (Up)[node distance=2cm, right of=B][label=above:$i_*$]{};
       		\node (C)[node distance=5cm, right of=B]{$K_0\left(A_{\mathbb{U}}^{m,0}\rtimes_{\tau_{m,0}} \bbz\right)$};
       		\node (D)[node distance=2cm, below of=C]{$K_1\left(A_{\mathbb{U}}^{m,0}\right)$};
       		\node (E)[node distance=5cm, left of=D]{$K_1\left(A_{\mathbb{U}}^{m,0}\right)$};
       		\node (F)[node distance=5cm, left of=E]{$K_1\left(A_{\mathbb{U}}^{m,0}\rtimes_{\tau_{m,0}} \bbz\right)$};
       		\node (Below)[node distance=2.3cm, left of=D][label=below:$\id_*-(\tau_{m,0})_{*}^{-1}$]{};
       		\node (Below)[node distance=2cm, left of=E][label=below:$i_*$]{};
       		\draw[myptr](A) to (B);
       		\draw[myptr](B) to (C);
       		\draw[myptr](C) to node{{ $\partial$}}(D);
       		\draw[myptr](D) to (E);
       		\draw[myptr](E) to (F);
       		\draw[myptr](F) to node{{ $\delta$}}(A);
       	\end{tikzpicture}
       \end{center}  
       
       Define for each $t \in [0,1]$ an automorphism $\tau_{m,0}^{(t)}$ on $A_{\mathbb{U}}^{m,0}$ by
       $$
       \tau_{m,0}^{(t)}(u_{ij}) = u_{ij} \text{ for } 1 \leq i < j \leq m+1, \quad \tau_{m,0}^{(t)}(s_i) = u_{i,m+1}^{-t} s_i \text{ for } 1 \leq i \leq m.
       $$
       Then $\tau_{m,0}^{(0)} = \mathrm{id}$ and $\tau_{m,0}^{(1)} = \tau_{m,0}$, so the continuous family $\tau_{m,0}^{(t)}$, $t\in [0,1]$ defines a homotopy between $\mathrm{id}$ and $\tau_{m,0}$. This implies that $\mathrm{id}_* - (\tau_{m,0})_*^{-1} = 0$. By exactness of the Pimsner-Voiculescu sequence and freeness of the $K$-groups, we obtain
       \begin{align*}
       	K_0\left(A_{\mathbb{U}}^{m,0} \rtimes_{\tau_{m,0}} \mathbb{Z}\right) \cong K_1\left(A_{\mathbb{U}}^{m,0} \rtimes_{\tau_{m,0}} \mathbb{Z}\right) \cong K_0\left(A_{\mathbb{U}}^{m,0}\right) \oplus K_1\left(A_{\mathbb{U}}^{m,0}\right) &\cong \mathbb{Z}^{2^{2m + \binom{m}{2}}}\\
       	&= \mathbb{Z}^{2^{(m+1) + \binom{m+1}{2} - 1}}.
       \end{align*}
       Since $C_{\mathbb{U}}^{m+1,0} \cong A_{\mathbb{U}}^{m,0} \rtimes_{\tau_{m,0}} \mathbb{Z}$, the formula holds for $m+1$. By induction, the result follows for all $m \in \mathbb{N}$.
       \qed
       
       Let $A$ be a unital $C^*$-algebra, and let $\alpha$ be an automorphism of $A$. Consider the $C^*$-subalgebra $\mathcal{T}\left(A, \alpha\right)$ of $\left(A \rtimes_\alpha \mathbb{Z}\right) \otimes \mathcal{T}$ generated by $A \otimes 1$ and $w \otimes v$, where $w$ denotes the adjoint unitary in $A \rtimes_\alpha \mathbb{Z}$, and $v$ is an isometry generating the Toeplitz algebra $\mathcal{T}$. Then, as shown in \cite[Page~98]{PimVoi-1980aa}, there exists a short exact sequence
       $$
       0 \longrightarrow \mathcal{K} \otimes A \xrightarrow{i} \mathcal{T}\left(A, \alpha\right) \xrightarrow{\psi} A \rtimes_\alpha \mathbb{Z} \longrightarrow 0.
       $$
       
       The following proposition will play a key role in the computation of the $K$-groups for all $m,n$.
       
       \begin{ppsn}\label{Toeplitz identification}
       	Let $m,n \in \mathbb{N}_0$ with $m+n \geq 1$. Then there exists a $^*$-isomorphism $\phi : C_{\mathbb{U}}^{m,n+1} \longrightarrow \mathcal{T}\left(A_{\mathbb{U}}^{m,n}, \tau_{m,n}\right)$ satisfying $\phi(u_{ij}) = u_{ij} \otimes 1 \quad \text{for all } 1 \leq i < j \leq m+n+1$, and for $1 \leq i \leq m+n+1$,
       	$$\phi(s_i^{m,n+1}) =
       	\begin{cases}
       		s_i^{m+1,n}\otimes 1 & \text{if } i \neq m+1, \\
       		s_i^{m+1,n} \otimes v & \text{if } i = m+1.
       	\end{cases}$$
       \end{ppsn}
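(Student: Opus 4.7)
The plan is to construct $\phi$ using the universal property of $C_{\mathbb{U}}^{m,n+1}$, to verify surjectivity directly from the generating set of $\mathcal{T}(A_{\mathbb{U}}^{m,n},\tau_{m,n})$, and to establish injectivity through a five--lemma argument built on the Pimsner--Voiculescu short exact sequence.

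For well-definedness, let $\overline{u}_{ij}=u_{ij}\otimes 1$, $\overline{s}_i=s_i^{m+1,n}\otimes 1$ for $i\neq m+1$, and $\overline{s}_{m+1}=s_{m+1}^{m+1,n}\otimes v$. Unitarity of the $\overline{u}_{ij}$, all commutations among the $\overline{u}_{ij}$ and between them and the $\overline{s}_i$, unitarity of $\overline{s}_i$ for $i\leq m$, and the fact that $\overline{s}_{m+1}$ is an isometry (a unitary tensored with an isometry) are immediate from tensor-factor placement. The substantive check is the twisted commutation $\overline{s}_i^*\overline{s}_j=\overline{u}_{ij}^*\overline{s}_j\overline{s}_i^*$: when $i,j\neq m+1$ it is inherited from the corresponding relation in $C_{\mathbb{U}}^{m+1,n}\otimes 1$, and when $i=m+1$ (resp.\ $j=m+1$) a factor $v$ (resp.\ $v^*$) appears on both sides in the $\mathcal{T}$-slot and cancels, reducing to the twisted commutation between $s_{m+1}^{m+1,n}$ and $s_j^{m+1,n}$ (resp.\ $s_i^{m+1,n}$) in $C_{\mathbb{U}}^{m+1,n}$. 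Universality then produces $\phi$, and surjectivity follows since $\mathcal{T}(A_{\mathbb{U}}^{m,n},\tau_{m,n})$ is generated by $A_{\mathbb{U}}^{m,n}\otimes 1$ and $w\otimes v$, both of which lie in the image of $\phi$ on generators under the identification of Proposition \ref{cross product unitary}.

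Turning to injectivity, let $P:=1-s_{m+1}^{m,n+1}(s_{m+1}^{m,n+1})^*$ and let $J$ be the closed two-sided ideal it generates in $C_{\mathbb{U}}^{m,n+1}$. Promoting $s_{m+1}$ to a unitary yields the natural quotient $q:C_{\mathbb{U}}^{m,n+1}\to C_{\mathbb{U}}^{m+1,n}$ with kernel $J$. The Pimsner--Voiculescu short exact sequence
\begin{equation*}
0\longrightarrow\mathcal{K}\otimes A_{\mathbb{U}}^{m,n}\longrightarrow\mathcal{T}(A_{\mathbb{U}}^{m,n},\tau_{m,n})\xrightarrow{\psi} A_{\mathbb{U}}^{m,n}\rtimes_{\tau_{m,n}}\mathbb{Z}\longrightarrow 0
\end{equation*}
has kernel generated by $1\otimes(1-vv^*)=\phi(P)$. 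Identifying the codomain of $\psi$ with $C_{\mathbb{U}}^{m+1,n}$ via Proposition \ref{cross product unitary}, one checks on generators that $\psi\circ\phi=q$, so $\phi$ yields a commutative ladder of short exact sequences whose right-hand vertical arrow is an isomorphism. By the five lemma, the conclusion reduces to showing that $\phi|_J:J\to\mathcal{K}\otimes A_{\mathbb{U}}^{m,n}$ is an isomorphism; surjectivity of $\phi|_J$ is automatic.

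The principal obstacle is injectivity of $\phi|_J$. I would pass to the corner $PC_{\mathbb{U}}^{m,n+1}P$: a direct calculation with the twisted commutation relations, verified inside the faithful representation of Theorem \ref{F.rep m,n}, gives $Ps_{m+1}^{m,n+1}=0$, $Ps_i^{m,n+1}=s_i^{m,n+1}P$ for every $i\neq m+1$, and the commutation of $P$ with every $u_{ij}$. Letting $B$ denote the $C^*$-subalgebra of $C_{\mathbb{U}}^{m,n+1}$ generated by $\{s_i^{m,n+1}:i\neq m+1\}\cup\{u_{ij}\}$, this gives $PC_{\mathbb{U}}^{m,n+1}P=BP$, and the same faithful representation identifies this corner canonically with $A_{\mathbb{U}}^{m,n}$; the analogous Pimsner--Voiculescu corner is $A_{\mathbb{U}}^{m,n}\otimes\mathbb{C}(1-vv^*)\cong A_{\mathbb{U}}^{m,n}$, between which $\phi$ restricts to the identity. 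Since $P$ and $1\otimes(1-vv^*)$ are full projections in $J$ and $\mathcal{K}\otimes A_{\mathbb{U}}^{m,n}$ respectively, the Morita equivalence of each ideal with its own corner promotes this corner isomorphism to the full isomorphism of $\phi|_J$, completing the proof.
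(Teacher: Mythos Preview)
Your proof is correct and takes a genuinely different route from the paper. The paper establishes injectivity of $\phi$ via the continuous $C(X)$-algebra machinery: by Proposition~\ref{injective representation} it suffices that each fibre map $\pi_x$ (for nondegenerate $x$) factors through $\phi$, and this is checked by building an explicit $\phi_x:C_x^{m,n+1}\to C$ and invoking Corollary~\ref{faithful homomorphism m,n} together with simplicity of the nondegenerate fibres. Your argument instead sets up a ladder of short exact sequences, the Pimsner--Voiculescu sequence on the target and the quotient by $J=\langle P\rangle$ on the source, and reduces injectivity to the corner via the five lemma and fullness of $P$. This is more structural and avoids the fibrewise analysis entirely; the paper's approach, by contrast, fits naturally into the $C(X)$-algebra framework developed throughout Section~3.

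Two remarks on your write-up. First, the identification $PC_{\mathbb{U}}^{m,n+1}P=BP$ uses that every monomial can be reordered as $b_1\,s_{m+1}^a(s_{m+1}^*)^b\,b_2$ with $b_1,b_2\in B$; this in turn rests on the relation $s_is_j=u_{ij}s_js_i$, which does hold for doubly twisted isometries (and can be checked in the faithful representation), but you might state it explicitly. Second, the phrase ``Morita equivalence promotes this corner isomorphism to the full isomorphism'' is slightly off: Morita equivalence alone does not do this. The clean argument is that if $K=\ker(\phi|_J)$ then $PKP\subset\ker(\phi|_{PJP})=0$, and fullness of $P$ in $J$ forces $K=0$. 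Similarly, the cleanest way to see that $\phi|_{PJP}$ is injective is not via the faithful representation but directly: the obvious surjection $\sigma:A_{\mathbb{U}}^{m,n}\to PJP=BP$ satisfies $\phi\circ\sigma(a)=a\otimes(1-vv^*)$ on generators, so $\phi\circ\sigma$ is an isomorphism and hence both $\sigma$ and $\phi|_{PJP}$ are.
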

       
       \begin{proof}
       	\begin{figure}[h]
       		\centering
       		\[
       		\begin{tikzcd}[
       			column sep=large,
       			row sep=large,
       			execute at end picture={
       				\node at (-0.4, 0.0) {\scalebox{3.0}{$\circlearrowright$}};
       			}
       			]
       			C_{\mathbb{U}}^{m,n+1}
       			\arrow[r, "\phi"] 
       			\arrow[d, "\pi_x"']        			
       			& \mathcal{T}\left(A_{\mathbb{U}}^{m,n},\tau_{m,n}\right) 
       			\arrow[d, "\pi_x^{(1)}\otimes id|_{\mathcal{T}\left(A_{\mathbb{U}}^{m,n},\tau_{m,n}\right)}"] 
       			\\
       			C_x^{m,n+1} 
       			\arrow[r, "\phi_x", "\sim"'] 
       			& C
       		\end{tikzcd}
       		\]
       		\caption{Diagram corresponding to Proposition \ref{Toeplitz identification}}
       		\label{fig:commutative_diagram1}
       	\end{figure}
       	
       	It is immediate from the definition that the map $\phi$ is a surjective $^*$-homomorphism. To prove injectivity, by Proposition~\ref{injective representation}, it suffices to show that for each nondegenerate point $x \in \mathbb{T}^{\binom{m+n+1}{2}}$, the canonical representation $\pi_x : C_{\mathbb{U}}^{m,n+1} \to C_x^{m,n+1}$ factors through $\phi$.
       	
       	Let $\pi_x^{(1)} : C_{\mathbb{U}}^{m+1,n} \to C_x^{m+1,n}$ be the canonical surjection. Define $C := \left(\pi_x^{(1)} \otimes \mathrm{id}\right)\left[ \mathcal{T}(A_{\mathbb{U}}^{m,n}, \tau_{m,n}) \right]$. Note that $C$ is the $C^*$-subalgebra of $C_x^{m+1,n} \otimes \mathcal{T}$ generated by the elements $s_i^{m+1,n} \otimes 1$ for $i \neq m+1$, and $s_{m+1}^{m+1,n} \otimes v$. Define a $^*$-homomorphism $\phi_x : C_x^{m,n+1} \to C$ by
       	\[
       	\phi_x\left(s_i^{m,n+1}\right) =
       	\begin{cases}
       		s_i^{m+1,n} \otimes 1 & \text{if } i \neq m+1, \\
       		s_i^{m+1,n} \otimes v & \text{if } i = m+1,
       	\end{cases}
       	\]
       	for $1 \leq i \leq m+n+1$. Let $\mathcal{P} := \displaystyle\prod_{i=m+1}^{m+n+1} \left(1 - s_i^{m,n+1}\left(s_i^{m,n+1}\right)^*\right)$, and let $D$ be the $C^*$-subalgebra of $C_x^{m,n+1}$ generated by the elements $\mathcal{P}s_i$, for $1 \leq i \leq m$. We compute:
       	$$\phi_x(\mathcal{P}) =
       	\begin{cases}
       		1 \otimes (1 - vv^*) & \text{if } n = 0, \\
       		\left( \displaystyle\prod_{i = m+2}^{m+n+1} \left(1 - s_i^{m+1,n}\left(s_i^{m+1,n}\right)^*\right) \right) \otimes (1 - vv^*) & \text{if } n \neq 0,
       	\end{cases}$$
       	which is nonzero. Hence, $\phi_x|_D$ is nonzero. Since $x$ is nondegenerate, it follows that $\phi_x|_D$ is faithful. By Corollary~\ref{faithful homomorphism m,n}, $\phi_x$ is injective, and thus an isomorphism.
       	
       	Therefore, the map $\pi_x$ factors as $\pi_x = \phi_x^{-1} \circ (\pi_x^{(1)} \otimes \mathrm{id}) \circ \phi$ (see Figure \ref{fig:commutative_diagram1}), which shows that $\phi$ is injective. Hence, $\phi$ is a $^*$-isomorphism.
       \end{proof}
       
       \begin{thm}
       	Let $m,n\in\mathbb{N}_0$ with $m+n\geq 2$. Then the $K$-groups of the $C^*$-algebra $C_{\mathbb{U}}^{m,n}$ are given by
       	$$K_0\left(C_{\mathbb{U}}^{m,n}\right) \cong K_1\left(C_{\mathbb{U}}^{m,n}\right) \cong \mathbb{Z}^{2^{m+\binom{m+n}{2}-1}}.$$
       \end{thm}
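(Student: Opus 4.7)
The plan is to prove the formula by induction on $n$ for each fixed $m \in \bbn_0$, combining three ingredients: the Toeplitz identification $C^{m,n+1}_{\mathbb{U}} \cong \mathcal{T}(A^{m,n}_{\mathbb{U}},\tau_{m,n})$ from Proposition~\ref{Toeplitz identification}; the classical Pimsner--Voiculescu result \cite{PimVoi-1980aa} that the canonical inclusion $A \hookrightarrow \mathcal{T}(A,\alpha)$ induces an isomorphism on $K$-theory; and the identification $A^{m,n}_{\mathbb{U}} \cong C^{m,n}_{\mathbb{U}} \otimes C(\bbbt)^{\otimes(m+n)}$ recorded in the paper just after Definition~\ref{inducing unitary}, which via the Künneth formula will reduce the $K$-theory computation to an arithmetic identity on exponents. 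Throughout, set $k_{m,n} := 2^{m+\binom{m+n}{2}-1}$, so the goal reads $K_0(C^{m,n}_{\mathbb{U}}) \cong K_1(C^{m,n}_{\mathbb{U}}) \cong \bbz^{k_{m,n}}$.

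For the base cases, Proposition~\ref{K1 general 1} already supplies the conclusion for every $(m,0)$ with $m \geq 1$. For $m=0$ the formula fails at $(0,1)$ (since $C^{0,1}_{\mathbb{U}} \cong \scrt$ has $K_0 \neq K_1$), so I would establish $(0,2)$ as a separate base: by Proposition~\ref{Toeplitz identification} we have $C^{0,2}_{\mathbb{U}} \cong \mathcal{T}(A^{0,1}_{\mathbb{U}},\tau_{0,1})$ with $A^{0,1}_{\mathbb{U}} \cong \scrt \otimes C(\bbbt)$, and the Pimsner--Voiculescu identification combined with the Künneth formula (using $K_0(\scrt) = \bbz$, $K_1(\scrt) = 0$, $K_i(C(\bbbt)) = \bbz$) yields $K_0(C^{0,2}_{\mathbb{U}}) \cong K_1(C^{0,2}_{\mathbb{U}}) \cong \bbz = \bbz^{k_{0,2}}$.

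For the inductive step, assume the formula for $(m,n)$ with $m+n \geq 2$. Proposition~\ref{Toeplitz identification} together with the Pimsner--Voiculescu identification gives $K_i(C^{m,n+1}_{\mathbb{U}}) \cong K_i(A^{m,n}_{\mathbb{U}}) \cong K_i\bigl(C^{m,n}_{\mathbb{U}} \otimes C(\bbbt)^{\otimes(m+n)}\bigr)$. Iterating the Künneth formula, using that tensoring any algebra with equal free abelian $K$-groups $\bbz^r$ by $C(\bbbt)$ doubles the rank in both degrees, one gets $K_i(A^{m,n}_{\mathbb{U}}) \cong \bbz^{k_{m,n}\cdot 2^{m+n}}$. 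The identity $\binom{m+n+1}{2} = \binom{m+n}{2} + (m+n)$ then yields $k_{m,n}\cdot 2^{m+n} = k_{m,n+1}$, closing the induction.

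The main obstacle is correctly invoking the Pimsner--Voiculescu identification $K_*(\mathcal{T}(A,\alpha)) \cong K_*(A)$ via the canonical inclusion; this is the technical heart of \cite{PimVoi-1980aa} but requires no new argument in our setting, and in particular we avoid having to compute the action of $(\tau_{m,n})_*$ on $K$-theory (as was done via a homotopy in the proof of Proposition~\ref{K1 general 1}). The applicability of the Künneth formula is unproblematic, since all algebras in sight are built from $C(\bbbt)$ and $\scrt$ by tensor products and crossed products, hence nuclear and in the bootstrap class, and at every stage of the induction one of the tensor factors is $C(\bbbt)$ with torsion-free $K$-groups.
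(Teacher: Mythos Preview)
Your proposal is correct and follows essentially the same route as the paper's own proof: induction on $n$ via the Toeplitz identification (Proposition~\ref{Toeplitz identification}), the Pimsner--Voiculescu isomorphism $K_*(\mathcal{T}(A,\alpha))\cong K_*(A)$, and the decomposition $A_{\mathbb{U}}^{m,n}\cong C_{\mathbb{U}}^{m,n}\otimes C(\bbbt)^{\otimes(m+n)}$ (the paper phrases this last step as iterated Pimsner--Voiculescu for the trivial $\bbz$-action rather than K\"unneth, which is cosmetic). One small slip to fix: your inductive step is stated only for $m+n\geq 2$, so for $m=1$ the passage from your base $(1,0)$ to $(1,1)$ is not literally covered; the paper treats $(0,2)$ and $(1,1)$ together as a boundary case, and your machinery applies there verbatim once you relax the constraint to $m+n\geq 1$.
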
  
       
       \begin{proof}
       	We proceed by induction on $n \in \mathbb{N}_0$.
       	
       	\textbf{Base case:} For $n = 0$, the assertion follows from Proposition~\ref{K1 general 1}.
       	
       	\textbf{Inductive step:} Assume that for some $n \geq 0$ and all $m \in \mathbb{N}_0$ with $m + n \geq 2$, we have
       	\[
       	K_0\left(C_{\mathbb{U}}^{m,n}\right) \cong K_1\left(C_{\mathbb{U}}^{m,n}\right) \cong \mathbb{Z}^{2^{m + \binom{m+n}{2} - 1}}.
       	\]
       	We now show that for all $m \in \mathbb{N}_0$ with $m+n \geq 1$, it holds that
       	\[
       	K_0\left(C_{\mathbb{U}}^{m,n+1}\right) \cong K_1\left(C_{\mathbb{U}}^{m,n+1}\right) \cong \mathbb{Z}^{2^{m + \binom{m+n+1}{2} - 1}}.
       	\]
       	By Proposition~\ref{Toeplitz identification}, we have $C_{\mathbb{U}}^{m,n+1} \cong \mathcal{T}\left(A_{\mathbb{U}}^{m,n}, \tau_{m,n}\right)$, and by Lemma~2.3 together with the proof of Theorem~2.4 in \cite{PimVoi-1980aa}, we obtain $K_i\left(\mathcal{T}\left(A_{\mathbb{U}}^{m,n}, \tau_{m,n}\right)\right) \cong K_i\left(A_{\mathbb{U}}^{m,n}\right), \, i=0,1$. Hence,
       	\[
       	K_i\left(C_{\mathbb{U}}^{m,n+1}\right) \cong K_i\left(A_{\mathbb{U}}^{m,n}\right), \quad i=0,1.
       	\]
       	
       	\textit{\textbf{Case 1:} $m+n = 1$.} Then $(m,n) = (0,1)$ or $(1,0)$.
       	
       	If $(m,n) = (0,1)$, then $A_{\mathbb{U}}^{0,1} \cong \mathcal{T} \otimes C\left(\mathbb{T}\right)$, so
       	\[
       	K_i\left(C_{\mathbb{U}}^{0,2}\right) \cong K_i\left(A_{\mathbb{U}}^{0,1}\right) \cong K_0\left(\mathcal{T}\right) \oplus K_1\left(\mathcal{T}\right) \cong \mathbb{Z}, \quad i = 0,1.
       	\]
       	
       	If $(m,n) = (1,0)$, then $A_{\mathbb{U}}^{1,0} \cong C\left(\mathbb{T}\right) \otimes C\left(\mathbb{T}\right)$, so
       	\[
       	K_i\left(C_{\mathbb{U}}^{1,1}\right) \cong K_i\left(A_{\mathbb{U}}^{1,0}\right) \cong K_0\left(C\left(\mathbb{T}\right)\right) \oplus K_1\left(C\left(\mathbb{T}\right)\right) \cong \mathbb{Z}^2, \quad i = 0,1.
       	\]
       	In both cases, the result agrees with the stated formula.
       	
       	\textit{\textbf{Case 2:} $m+n \geq 2$.} Then $A_{\mathbb{U}}^{m,n} \cong C_{\mathbb{U}}^{m,n} \otimes C\left(\mathbb{T}\right)^{\otimes (m+n)}$. By repeated application of the Pimsner-Voiculescu six-term exact sequence for crossed products of the form $A \rtimes_{\mathrm{id}} \mathbb{Z} \cong A \otimes C\left(\mathbb{T}\right)$, we get
       	\[
       	K_i\left(C_{\mathbb{U}}^{m,n+1}\right) \cong K_i\left(A_{\mathbb{U}}^{m,n}\right) \cong K_0\left(C_{\mathbb{U}}^{m,n}\right)^{\oplus 2^{m+n}} \oplus K_1\left(C_{\mathbb{U}}^{m,n}\right)^{\oplus 2^{m+n}}, \quad i = 0,1.
       	\]
       	Using the induction hypothesis, $K_0\left(C_{\mathbb{U}}^{m,n}\right) \cong K_1\left(C_{\mathbb{U}}^{m,n}\right) \cong \mathbb{Z}^{2^{m + \binom{m+n}{2} - 1}}$, we obtain
       	\[
       	K_i\left(C_{\mathbb{U}}^{m,n+1}\right) \cong \left(\mathbb{Z}^{2^{m + \binom{m+n}{2} - 1}}\right)^{\oplus 2^{m+n}} \cong \mathbb{Z}^{2^{m + \binom{m+n+1}{2} - 1}}, \quad i = 0,1.
       	\]
       	
       	This completes the induction and the proof.
       \end{proof}
       
\section{Twisted isometries with finite dimensional wandering spaces}

In this section, we classify the universal $C^{*}$-algebra generated by a pair $(T_{1},T_{2})$ of doubly non commuting isometries with respect to the parameter $\theta$ under the assumption that all the wandering subspaces are finite dimensional. 

\bppsn \label{KK}
Let $\xi$ and  be two full essential extensions of $\cla_{\theta}^2$ by $\mathcal{K}^{\oplus n}$ given by;
\[
\xi:0\rightarrow \mathcal{K}^{\oplus n} \rightarrow \mathbf{C} \rightarrow \mathcal{A}_{\theta}^{2}\rightarrow  0, \qquad \xi^{\prime}:0\rightarrow \mathcal{K}^{\oplus n} \rightarrow \mathbf{C}^{\prime} \rightarrow \mathcal{A}_{\theta}^{2}\rightarrow  0.
\]
If $[\xi]=[\xi^{\prime}]$ in $KK^1\left(\cla_{\theta},\, \mathcal{K}^{\oplus n} \right)$ then $\mathbf{C}$ is isomorphic to $\mathbf{C}^{\prime}$.
\eppsn 
\prf Note that $ \mathcal{A}_{\theta}^{2}$ satisfies  the Universal Coeﬃcient
Theorem.  Moreover, $\mathcal{K}^{\oplus n} \cong C(I_n)\otimes \clk$, and therefore, $Q\left(\mathcal{K}^{\oplus n}\right)$ has property $(P)$. From Theorem $2.4$ in \cite{Lin-2009aa}, it follows that $\xi$ and $\xi^{\prime}$  are unitary equivalent, hence weakly isomorphic (see page 67). Hence $C$ and $C^{\prime}$ are isomorphic. 

\qed

\begin{rmrk} \label{remark 1}
	Consider the operators $T_{1}=S^{*}\otimes 1$ and $T_{2}=e^{2\pi\mathrm{i}\theta N}\otimes S^{*}$ on $\ell^{2}(\mathbb{N})^{\otimes 2}$. Let $\mathfrak{A}_{\theta}$ denote the $C^{*}$-subalgebra of $\cll\left(\ell^2(\bbn)^{\otimes 2} \right)$ generated by $T_{1}$ and $T_{2}$. Since the representation mapping $s_1 \mapsto T_1$ and $s_2 \mapsto T_2$ is a faithful representation of $C_{\theta}^{0,2}$, it follows that  $\mathfrak{A}_{\theta}$ is isomorphic to $C_{\theta}^{0,2}$. Moreover, we have $K_{0}(\mathfrak{A}_{\theta})=\mathbb{Z}$ and $K_{1}(\mathfrak{A}_{\theta})=0$. 
\end{rmrk}

\bdfn Let $\Gamma_n=\{(\lambda_1,\cdots, \lambda_n)\in \bbbt^n: \lambda_{i}\neq\lambda_{j} \mbox{ for all } 1\leq i\neq j\leq n\}$.  Take $\Lambda \in \Gamma_n$. Consider the operators $T_{1}$ and $T_{2}$ on $\ell^2(\mathbb{N})^{\oplus n}$ defined as 
$$T_{1}=\left(S^*\right)^{\oplus n},\,\quad  T_{2}=\bigoplus_{i=1}^n \left(\lambda_i e^{-2\pi\mathrm{i}\theta N}\right).$$  
The $C^*$-algebra $\mathfrak{D}_{n,\theta}^{\Lambda}$ is defined to be the $C^{*}$-subalgebra  of $\cll\left(\ell^2(\mathbb{N})^{\oplus n}\right)$ generated by $T_{1}$ and $T_{2}$.
\edfn
It is clear that for $n=1$,  $\mathfrak{D}_{n,\theta}^{\Lambda}$  does not depend on $\Lambda$. In what follows, we will show that for higher values of $n$, $\mathfrak{D}_{n,\theta}^{\Lambda}$ is independent of $\Lambda$, and thereby, we denote it by $\mathfrak{D}_{n,\theta}$. 

\bppsn \label{short exact sequence 1} Let $\Lambda\in \Gamma_n$ and $\theta \in \bbr \setminus \bbq$.  Then 
 there is the following short exact sequence
\[
\xi^{\Lambda}_n: \quad 0\longrightarrow \mathcal{K}^{\oplus n} \xrightarrow{i}\mathfrak{D}_{n,\theta}^{\Lambda} \xrightarrow{\eta} \mathcal{A}_{\theta}^{2}\longrightarrow  0.
\]
where $\eta$ maps the generators $T_1$ and $T_2$ of $\mathfrak{D}_{n,\theta}^{\Lambda}$ to the  generators $s_1$ and $s_2$, respectively, of $\mathcal{A}_{\theta}^{2}$.
\eppsn 
\prf We have 
$$ I-T_{1} T_{1}^{*}=p^{\oplus n} \quad \mbox{ and }  \quad T_2\left(I-T_{1} T_{1}^{*}\right)= \bigoplus_{r=1}^n\left(\lambda_rp\right). $$
This shows that $0\oplus\left(\bigoplus_{j=2}^n \left(\alpha_j p\right)\right) \in \mathfrak{D}_{n,\theta}^{\Lambda}$, where $\alpha_j =1- \overline{\lambda_1}\lambda_j \notin \{0,1\}$.
Proceeding in this way, we get $0^{ \oplus (n-1)} \oplus  p \in \mathfrak{D}_{n,\theta}^{\Lambda}$. Similarly, we have $0^{\oplus (r-1)} \oplus p \oplus 0^{\oplus (n-r)}  \in \mathfrak{D}_{n,\theta}^{\Lambda}$ for all $1\leq r \leq n$.
By multiplication  with $T_{1}^{i}$ on left and ${T_{1}^{*}}^{j}$ on right in each of the above elements, we have 
$0^{\oplus (r-1)}\oplus p_{ij}\oplus 0^{\oplus (n-r)} \in \mathfrak{D}_{n,\theta}^{\Lambda}$ for all $1\leq r \leq n$. This proves that $\mathcal{K}^{\oplus n} \subset  \mathfrak{D}_{n,\theta}^{\Lambda}$ as a closed ideal.  Moreover,  the generators $[T_1]$ and $[T_2]$ of the quotient $C^*$-algebra $\mathfrak{D}_{n,\theta}^{\Lambda} / {\mathcal{K}^{\oplus n}}$ satisfy the defining relations of $\mathcal{A}_{\theta}^2$. By simplicity of $\mathcal{A}_{\theta}^2$, we get $\mathfrak{D}_{n,\theta}^{\Lambda} / \mathcal{K}^{\oplus n} \cong \mathcal{A}_{\theta}^2$. Hence we have the following short exact sequence
\[
\quad 0\longrightarrow \mathcal{K}^{\oplus n} \xrightarrow{i} \mathfrak{D}_{n,\theta}\xrightarrow{\pi} \mathcal{A}_{\theta}^{2}\longrightarrow  0.
\]
\qed 
\blmma  \label{full} 
Let $\Lambda\in \Gamma_n$ and $\theta \in \bbr \setminus \bbq$. 
Then the  short exact sequence
$\xi^{\Lambda}_n$ is a full essential extension of $\mathcal{A}_{\theta}^{2}$ by  $\mathcal{K}^{\oplus n}$.
\elmma 
\prf 
Let $\tau^{\Lambda}_n:\mathcal{A}_{\theta}^{2} \rightarrow Q\left(\mathcal{K}^{\oplus n}\right)$ be the Busby invariant corresponding to $\xi^{\Lambda}_n$.  Since $\mathcal{A}_{\theta}^{2}$  is simple, $\tau^{\Lambda}_n$ is injective, proving that   the extension $\xi^{\Lambda}_n$ is essential. To prove $\xi^{\Lambda}_n$ is full, first observe that  $Q\left(\mathcal{K}^{\oplus n}\right)\cong Q^{\oplus n} \cong C(I_n) \otimes Q$, and hence all the ideals  are of the form $\mathfrak{J}_F; \, F \subset I_n$, where $\mathfrak{J}_F=\{f:I_n \rightarrow Q: f(x)=0 \mbox{ for all } x \in F\}$.   Take a nonzero element $a \in \mathcal{A}_{\theta}^{2}$.  Then $\langle a \rangle =\mathcal{A}_{\theta}^2$ as $\mathcal{A}_{\theta}^2$ is simple, and hence we have 
\[
 \tau^{\Lambda}_n(\mathcal{A}_{\theta}^2)=  \tau^{\Lambda}_n( \langle a \rangle )\subset   \langle \tau^{\Lambda}_n(a) \rangle. 
\]
Since  $\tau^{\Lambda}_n(s_1)=\left[\left(S^*\right)^{\oplus n}\right]$, the only ideal containing  $\tau^{\Lambda}_n(a)$ is $Q\left(\mathcal{K}^{\oplus n}\right)$, proving the claim.
\qed 
\begin{ppsn}\label{General 1}
Let $\Lambda\in \Gamma_n$ and $\theta \in \bbr \setminus \bbq$.  Then one has the following: $$K_{0}\left(\mathfrak{D}_{n,\theta}^{\Lambda}\right)\cong \mathbb{Z}^{n+1} \mbox{ and } K_{1}\left(\mathfrak{D}_{n,\theta}^{\Lambda}\right)\cong\mathbb{Z}.$$
\end{ppsn}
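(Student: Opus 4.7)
\prf \emph{(Proof proposal)} The plan is to apply the six-term exact sequence in $K$-theory to the short exact sequence
\[
\xi^{\Lambda}_n: \quad 0\longrightarrow \mathcal{K}^{\oplus n} \xrightarrow{i}\mathfrak{D}_{n,\theta}^{\Lambda} \xrightarrow{\eta} \mathcal{A}_{\theta}^{2}\longrightarrow  0
\]
established in Proposition \ref{short exact sequence 1}. Recall that since $\theta \in \bbr \setminus \bbq$, one has $K_0(\cla_{\theta}^2)\cong K_1(\cla_{\theta}^2)\cong \bbz^2$ (with $K_1$ generated by the classes $[s_1]$ and $[s_2]$ of the two canonical unitaries), and also $K_0(\clk^{\oplus n})\cong \bbz^n$, $K_1(\clk^{\oplus n})=0$, where a natural basis of $K_0(\clk^{\oplus n})$ is given by the classes of the rank-one projections $0^{\oplus(r-1)}\oplus p\oplus 0^{\oplus(n-r)}$, $1\leq r\leq n$.

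The key computation is the exponential map $\partial_1: K_1(\cla_\theta^2)\to K_0(\clk^{\oplus n})$. For the generator $[s_2]$, the element $T_2=\bigoplus_{i=1}^n \lambda_i e^{-2\pi\mathrm{i}\theta N}$ is already a unitary in $\mathfrak{D}_{n,\theta}^{\Lambda}$ (it is a direct sum of unitaries on $\ell^2(\bbn)$), and hence $\partial_1[s_2]=0$. For the generator $[s_1]$, the element $T_1=(S^*)^{\oplus n}$ is an isometry with $1-T_1T_1^*=p^{\oplus n}$; using the standard formula $\partial_1[u]=[1-a^*a]-[1-aa^*]$ for any partial-isometry lift $a$ of a unitary $u$, I get $\partial_1[s_1]=-[p^{\oplus n}]$, which under the identification $K_0(\clk^{\oplus n})\cong \bbz^n$ corresponds to $-(1,1,\ldots,1)$. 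Consequently,
\[
\ker\partial_1 \cong \bbz\cdot[s_2]\cong \bbz, \qquad \mathrm{im}\,\partial_1 \cong \bbz\cdot(1,\ldots,1)\cong \bbz.
\]

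Plugging these into the six-term exact sequence
\[
\begin{array}{ccccc}
K_0(\clk^{\oplus n}) & \xrightarrow{i_*} & K_0(\mathfrak{D}_{n,\theta}^{\Lambda}) & \xrightarrow{\eta_*} & K_0(\cla_\theta^2) \\
\uparrow\partial_1 & & & & \downarrow\partial_0 \\
K_1(\cla_\theta^2) & \xleftarrow{\eta_*} & K_1(\mathfrak{D}_{n,\theta}^{\Lambda}) & \xleftarrow{i_*} & K_1(\clk^{\oplus n})
\end{array}
\]
and using $K_1(\clk^{\oplus n})=0$, the bottom row yields $K_1(\mathfrak{D}_{n,\theta}^{\Lambda})\cong \ker\partial_1 \cong \bbz$, while the top row gives a short exact sequence
\[
0\longrightarrow \bbz^n/\mathrm{im}\,\partial_1 \longrightarrow K_0(\mathfrak{D}_{n,\theta}^{\Lambda}) \longrightarrow K_0(\cla_\theta^2)\longrightarrow 0,
\]
i.e., $0\to \bbz^{n-1}\to K_0(\mathfrak{D}_{n,\theta}^{\Lambda})\to \bbz^2\to 0$, which splits as $K_0(\cla_\theta^2)$ is free, giving $K_0(\mathfrak{D}_{n,\theta}^{\Lambda})\cong \bbz^{n+1}$.

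The main technical step is the calculation of $\partial_1$; in particular one must verify that $T_2$ is genuinely a unitary element of $\mathfrak{D}_{n,\theta}^{\Lambda}$ (using that $T_1^*T_1=I$, so $I\in \mathfrak{D}_{n,\theta}^{\Lambda}$) and correctly identify the class $[p^{\oplus n}]=(1,1,\ldots,1)$ in the chosen basis of $K_0(\clk^{\oplus n})$. Once $\partial_1$ is pinned down, everything else is routine diagram chasing. \qed
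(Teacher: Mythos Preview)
Your proof is correct and follows essentially the same approach as the paper: both apply the six-term exact sequence to $\xi^{\Lambda}_n$, compute the index map $\delta$ on the generators $[s_1],[s_2]$ of $K_1(\cla_\theta^2)$ via the lifts $T_1,T_2$, and read off the $K$-groups. You supply more detail than the paper (explicitly identifying $\mathrm{im}\,\partial_1$, quotienting, and invoking freeness of $K_0(\cla_\theta^2)$ to split the resulting extension), but the strategy is identical; the sign discrepancy in $\partial_1[s_1]$ is purely a matter of convention and does not affect the outcome.
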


\begin{proof}
	
	The six term short exact sequence of $K$-groups corresponding to the short exact sequence $\xi^{\Lambda}_n$ is given by the following.
	\begin{center} 
		\begin{tikzpicture}[node distance=1cm,auto]
			\tikzset{myptr/.style={decoration={markings,mark=at position 1 with %
						{\arrow[scale=2,>=stealth]{>}}},postaction={decorate}}}
			\node (A){$ K_0\left(\mathcal{K}^{\oplus n}\right)$};
			\node (Up)[node distance=2cm, right of=A][label=above:$i^{*}$]{};
			\node (B)[node distance=5cm, right of=A]{$K_0(\mathfrak{D}_{n,\theta}^{\Lambda})$};
			\node (Up)[node distance=2cm, right of=B][label=above:$\eta_*$]{};
			\node (C)[node distance=5cm, right of=B]{$K_0(\mathcal{A}_{\theta}^{2})$};
			\node (D)[node distance=2cm, below of=C]{$K_1\left(\mathcal{K}^{\oplus n}\right)$};
			\node (E)[node distance=5cm, left of=D]{$K_1(\mathfrak{D}_{n,\theta}^{\Lambda})$};
			\node (F)[node distance=5cm, left of=E]{$K_1(\mathcal{A}_{\theta}^{2})$};
			\node (Below)[node distance=2cm, left of=D][label=below:$i^{*}$]{};
			\node (Below)[node distance=2cm, left of=E][label=below:$\eta_*$]{};
			\draw[myptr](A) to (B);
			\draw[myptr](B) to (C);
			\draw[myptr](C) to node{{ $\partial$}}(D);
			\draw[myptr](D) to (E);
			\draw[myptr](E) to (F);
			\draw[myptr](F) to node{{ $\delta$}}(A);
		\end{tikzpicture}
	\end{center}
	Since the generators $s_{1},s_{2}$ of $\mathcal{A}_{\theta}^{2}$ lift to the generators $T_{1},T_{2}$ of $\mathfrak{D}_{n,\theta}^{\Lambda}$, respectively, we have $\delta\left(s_1\right)=p^{\oplus n}$, and $\delta\left(s_2\right)=0$. Using this, and the fact that $K_1\left(\mathcal{K}^{\oplus n}\right)=0$, we get the claim.
\end{proof}
\bthm   \label{homotopy} Let $n \in \bbn$ and $\Lambda, \Lambda^{\prime} \in \Gamma_n$. Then $\mathfrak{D}_{n,\theta}^{\Lambda}$ is isomorphic to $\mathfrak{D}_{n,\theta}^{\Lambda^{\prime}}$.
\ethm 
\prf  Since $\Gamma_n$ is path-connected, one gets a path $f(t); \, 0\leq t \leq 1,$ joining $\Lambda$ and  $\Lambda^{\prime}$. For $t \in [0,1]$, define a homomorphism $G_t:\mathcal{A}_{\theta}^{2} \rightarrow Q\left(\mathcal{K}^{\oplus n}\right)$ by 
$$ G_t(s_1)=s_1, \mbox{ and } G_t(s_2)=f(t)s_2. $$  Consider the map $G:[0,1]\times \mathcal{A}_{\theta}^{2} \rightarrow Q\left(\mathcal{K}^{\oplus n}\right)$ given by $G(t,s_1)=G_t(a)$ for $a \in \mathcal{A}_{\theta}^{2}$.  The map $G$ constitutes a homotopy between $\xi^{\Lambda}_n$ and $\xi^{\Lambda^{\prime}}_n$, thus, 
$[\xi^{\Lambda}_n]=[\xi^{\Lambda^{\prime}}_n]$ in $KK^1\left(\cla_{\theta},\,\mathcal{K}^{\oplus n} \right)$. Hence the claim follows by Propositions (\ref{full}, \ref{KK}).
\qed 

\bdfn \label{Fmn}  Let $\Lambda^1\in \Gamma_m$, and $ \Lambda^2\in \Gamma_n$. Consider the operators $T_{1}$ and $T_{2}$ on $\ell^2(\mathbb{N})^{\oplus (m+n)}$ defined as follows: 
$$T_{1}= \left(S^*\right)^{\oplus m}\oplus \left(\bigoplus_{j=1}^n \left(\lambda_j e^{2\pi\mathrm{i}\theta N}\right)\right),\quad  T_{2}= \bigoplus_{i=1}^m \left(\lambda_i^{'} e^{2\pi\mathrm{i}\theta N}\right)\oplus \left(S^*\right)^{\oplus n}.$$
The $C^*$-algebra $\mathfrak{F}_{m,n,\theta}^{\Lambda^1,\Lambda^2}$ is defined to be the $C^{*}$-subalgebra  of $\cll\left(\ell^2(\mathbb{N})^{\oplus (m+n)}\right)$ generated by $T_{1}$ and $T_{2}$.
\edfn 

\bppsn \label{short exact sequence 2} Then 
there is the following short exact sequence
\[
\chi^{\Lambda^1,\Lambda^{2}}_{m,n}: \quad 0\longrightarrow \mathcal{K}^{\oplus (m+n)} \xrightarrow{i} \mathfrak{F}_{m,n,\theta}^{\Lambda^1,\Lambda^2} \xrightarrow{\zeta} \mathcal{A}_{\theta}^{2}\longrightarrow  0.
\]
where $\zeta$ maps the generators $T_1$ and $T_2$ of $\mathfrak{F}_{m,n,\theta}^{\Lambda^1,\Lambda^2}$ to the  generators $s_1$ and $s_2$, respectively, of $\mathcal{A}_{\theta}^{2}$. Moreover, $\chi^{\Lambda^1,\Lambda^{2}}_{m,n}$ is a full essential extension of $\mathcal{A}_{\theta}^{2}$ by  $\mathcal{K}^{\oplus (m+n)}$.
\eppsn 
\prf 
We have 
$I- T_{1} T_{1}^{*}= p^{\oplus m}\oplus 0^{\oplus n}$ and $I-T_{2} T_{2}^{*}= 0^{\oplus m}\oplus p^{\oplus n}$. Also, 
$$T_2\left(I- T_{1}T_{1}^{*}\right)= \oplus_{i=1}^m \left(\lambda_i^{'} p\right)\oplus 0^{\oplus n} \mbox{ and }  \,T_1\left(I-T_{2}T_{2}^{*}\right)= 0^{\oplus m}\oplus\left(\oplus_{j=1}^n \left(\lambda_j p\right)\right).$$
Using this and proceeding along the lines of the proofs of Proposition \ref{short exact sequence 1} and Lemma \ref{full}, we obtain the claim.
\qed 
\begin{ppsn}\label{General 2}
	In the above setup, $K_{0}\left(\mathfrak{F}_{m,n,\theta}^{\Lambda^1,\Lambda^2}\right)\cong \mathbb{Z}^{m+n}$ and $K_{1}\left((\mathfrak{F}_{m,n,\theta}^{\Lambda^1,\Lambda^2}\right)\cong 0.$ 
\end{ppsn}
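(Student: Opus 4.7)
The plan is to apply the six-term exact sequence in K-theory to the short exact sequence $\chi^{\Lambda^1,\Lambda^2}_{m,n}$ furnished by Proposition \ref{short exact sequence 2}, in direct parallel with the proof of Proposition \ref{General 1}. Substituting the standard values $K_0(\mathcal{K}^{\oplus(m+n)})\cong\mathbb{Z}^{m+n}$, $K_1(\mathcal{K}^{\oplus(m+n)})=0$, $K_0(\mathcal{A}_\theta^2)\cong\mathbb{Z}^2$, and $K_1(\mathcal{A}_\theta^2)\cong\mathbb{Z}^2$ (the latter generated by $[s_1]$ and $[s_2]$), the six-term sequence collapses to
\[
0\longrightarrow K_1(\mathfrak{F}_{m,n,\theta}^{\Lambda^1,\Lambda^2})\longrightarrow \mathbb{Z}^2\xrightarrow{\delta}\mathbb{Z}^{m+n}\longrightarrow K_0(\mathfrak{F}_{m,n,\theta}^{\Lambda^1,\Lambda^2})\longrightarrow \mathbb{Z}^2\longrightarrow 0,
\]
so everything reduces to computing the index map $\delta:K_1(\mathcal{A}_\theta^2)\to K_0(\mathcal{K}^{\oplus(m+n)})$.

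For this, the key observation is that the generators $s_1,s_2$ lift to the isometries $T_1,T_2$ in $\mathfrak{F}_{m,n,\theta}^{\Lambda^1,\Lambda^2}$, so the standard formula gives $\delta[s_i]=[1-T_iT_i^*]-[1-T_i^*T_i]=[1-T_iT_i^*]$. Reading off the defect projections from Definition \ref{Fmn},
\[
1-T_1T_1^*=p^{\oplus m}\oplus 0^{\oplus n},\qquad 1-T_2T_2^*=0^{\oplus m}\oplus p^{\oplus n},
\]
which under the identification $K_0(\mathcal{K}^{\oplus(m+n)})\cong\mathbb{Z}^{m+n}$ (generated by the classes of rank-one projections in each summand) correspond to the vectors $(1^m,0^n)$ and $(0^m,1^n)$ respectively.

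These two vectors are $\mathbb{Z}$-linearly independent, so $\delta$ is injective, yielding $K_1(\mathfrak{F}_{m,n,\theta}^{\Lambda^1,\Lambda^2})=\ker\delta=0$. Moreover, the cokernel decomposes as $\mathbb{Z}^m/\mathbb{Z}(1,\ldots,1)\oplus\mathbb{Z}^n/\mathbb{Z}(1,\ldots,1)\cong\mathbb{Z}^{m+n-2}$, and the resulting short exact sequence
\[
0\longrightarrow\mathbb{Z}^{m+n-2}\longrightarrow K_0(\mathfrak{F}_{m,n,\theta}^{\Lambda^1,\Lambda^2})\longrightarrow\mathbb{Z}^2\longrightarrow 0
\]
splits since $\mathbb{Z}^2$ is free, producing $K_0(\mathfrak{F}_{m,n,\theta}^{\Lambda^1,\Lambda^2})\cong\mathbb{Z}^{m+n}$ as required. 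I do not anticipate any genuine obstacle here: the argument is strictly parallel to that of Proposition \ref{General 1}, the only new feature being that $\delta$ now hits two independent generators instead of one, which is precisely what forces $K_1$ to vanish while leaving the rank of $K_0$ unchanged.
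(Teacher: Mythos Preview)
Your proposal is correct and follows essentially the same approach as the paper: both apply the six-term exact sequence to $\chi^{\Lambda^1,\Lambda^2}_{m,n}$, compute $\delta[s_i]=[1-T_iT_i^*]$ using the isometric lifts $T_1,T_2$, and read off the result. Your version is in fact more explicit than the paper's, which simply records $\delta(s_1)=p^{\oplus m}\oplus 0^{\oplus n}$, $\delta(s_2)=0^{\oplus m}\oplus p^{\oplus n}$ and then asserts the claim, whereas you spell out the injectivity of $\delta$, the cokernel computation, and the splitting.
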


\begin{proof}
		The six term short exact sequence of $K$-groups corresponding to the short exact sequence $\chi^{\Lambda^1,\Lambda^{2}}_{m,n}$ is given by the following.
	\begin{center} 
		\begin{tikzpicture}[node distance=1cm,auto]
			\tikzset{myptr/.style={decoration={markings,mark=at position 1 with %
						{\arrow[scale=2,>=stealth]{>}}},postaction={decorate}}}
			\node (A){$ K_0\left(\mathcal{K}^{\oplus (m+n)}\right)$};
			\node (Up)[node distance=2cm, right of=A][label=above:$i^{*}$]{};
			\node (B)[node distance=5cm, right of=A]{$K_0\left(\mathfrak{F}_{m,n,\theta}^{\Lambda_1,\Lambda_2}\right)$};
			\node (Up)[node distance=2cm, right of=B][label=above:$\zeta_*$]{};
			\node (C)[node distance=5cm, right of=B]{$K_0\left(\mathcal{A}_{\theta}^{2}\right)$};
			\node (D)[node distance=2cm, below of=C]{$K_1\left(\mathcal{K}^{\oplus (m+n)}\right)$};
			\node (E)[node distance=5cm, left of=D]{$K_1\left(\mathfrak{F}_{m,n,\theta}^{\Lambda_1,\Lambda_2}\right)$};
			\node (F)[node distance=5cm, left of=E]{$K_1\left(\mathcal{A}_{\theta}^{2}\right)$};
			\node (Below)[node distance=2cm, left of=D][label=below:$i^{*}$]{};
			\node (Below)[node distance=2cm, left of=E][label=below:$\zeta_*$]{};
			\draw[myptr](A) to (B);
			\draw[myptr](B) to (C);
			\draw[myptr](C) to node{{ $\partial$}}(D);
			\draw[myptr](D) to (E);
			\draw[myptr](E) to (F);
			\draw[myptr](F) to node{{ $\delta$}}(A);
		\end{tikzpicture}
	\end{center}
	Since the generators $s_{1},s_{2}$ of $\mathcal{A}_{\theta}^{2}$ lift to the generators $T_{1},T_{2}$ of $\mathfrak{F}_{m,n,\theta}^{\Lambda_1,\Lambda_2}$, respectively, we have 
$$\delta(s_1)=p^{\oplus m}\oplus 0^{\oplus n}, \quad  \mbox{ and } \quad \delta(s_2)= 0^{\oplus m} \oplus p^{\oplus n}.$$ 
Using this, and the fact that $K_1\left(\mathcal{K}^{\oplus (m+ n}\right)=0$, we get the claim.
\end{proof}

\bthm \label{homotopy1}  Let $m, n \in \bbn$ and $(\Lambda^1,\Lambda^2),  (\gamma^1,\gamma^2) \in \Gamma_m \times \Gamma_n$. 
Then $\mathfrak{F}_{m,n,\theta}^{\Lambda^1,\Lambda^2}$ is isomorphic to $\mathfrak{F}_{m,n,\theta}^{\gamma^1,\gamma^2}$.
\ethm 
\prf The proof follows along similar lines as the proof of Theorem \ref{homotopy}.
\qed

\brmrk From now on, we denote $\mathfrak{F}_{m,n,\theta}^{\gamma^1,\gamma^2}$ simply  by $\mathfrak{F}_{m,n,\theta}$, without any ambiguity, thanks to Theorem \ref{homotopy}. 
\ermrk 

Let $\mathbf{T}=(T_{1},T_{2})$ be a pair of doubly non-commuting isometries on a Hilbert space $\mathcal{H}$ with finite-dimensional wandering spaces. Since $T_{2}$ is an isometry and $\mathcal{W}_{\{1\}}$ is finite dimensional, $T_{2}|_{\mathcal{W}_{\{1\}}}$ is a unitary. Thus $T_{2}^{m_{2}}\mathcal{W}_{\{1\}}=\mathcal{W}_{\{1\}}$. Similarly, we  have $T_{1}^{m_{1}}\mathcal{W}_{\{2\}}=\mathcal{W}_{\{2\}}$. Hence, in the  von Neumann-Wold decomposition of $\mathbf{T}$,  we get  $$\mathcal{H}=\oplus_{A\subseteq\{1,2\}}\mathcal{H}_{A},$$ where
\begin{enumerate}[(i)]
	\item $\mathcal{H}_{\phi}=\cap_{m_{1},m_{2}}T_{1}^{m_{1}}T_{2}^{m_{2}}\mathcal{H}$
	\item $\mathcal{H}_{\{1\}}=\oplus_{m_{1}}T_{1}^{m_{1}}(\cap_{m_{2}}T_{2}^{m_{2}}\textrm{ker}T_{1}^{*})
	=\oplus_{m_{1}}T_{1}^{m_{1}}(\cap_{m_{2}}T_{2}^{m_{2}}\mathcal{W}_{\{1\}})	=\oplus_{m_{1}}T_{1}^{m_{1}}\mathcal{W}_{\{1\}}$
	\item $\mathcal{H}_{\{2\}}=\oplus_{m_{2}}T_{2}^{m_{2}}(\cap_{m_{1}}T_{1}^{m_{1}}\textrm{ker}T_{2}^{*})=\oplus_{m_{2}}T_{2}^{m_{2}}(\cap_{m_{1}}T_{1}^{m_{1}}\mathcal{W}_{\{2\}})
	=\oplus_{m_{2}}T_{2}^{m_{2}}\mathcal{W}_{\{2\}}$
	\item $\mathcal{H}_{\{1,2\}}=\oplus_{m_{1},m_{2}}T_{1}^{m_{1}}T_{2}^{m_{2}}(\textrm{ker}T_{1}^{*}\cap\textrm{ker}T_{2}^{*})=\oplus_{m_{1},m_{2}}T_{1}^{m_{1}}T_{2}^{m_{2}}\mathcal{W}_{\{1,2\}}$
\end{enumerate}
The following theorem gives a classification of the algebra $C^{*}(\mathbf{T})$ for a tuple $\mathbf{T}$ of doubly non commuting isometries.

\begin{thm}\label{Classification}
	Let $\mathbf{T}=(T_{1},T_{2})$ be a pair of  doubly-twisted   isometries with  parameter $\theta\in\mathbb{R}\setminus \bbq$ acting  on a Hilbert space $\clh$. Assume that all the wandering subspaces are finite dimensional. Then we have the following cases.
	\begin{enumerate}[(i)]
		\item If $\mathcal{H}_{\{1,2\}}\neq0$, then $C^{*}(\mathbf{T})\cong C_{\theta}^{0,2}.$
		\item If $\mathcal{H}_{\{1,2\}}=\mathcal{H}_{\{2\}}=0$ and $\mathcal{H}_{\{1\}}\neq 0$ then $C^{*}(\mathbf{T})\cong \mathfrak{D}_{n,\theta}$, where  $n$ is the number of distinct eigenvalues of $\restr{T_2}{\mathcal{W}_{\{1\}}}$.
		\item If $\mathcal{H}_{\{1,2\}}=\mathcal{H}_{\{1\}}=0,\mathcal{H}_{\{2\}}\neq 0$, then $C^{*}(\mathbf{T})\cong \mathfrak{D}_{n,\theta}$, where  $n$ is the number of distinct eigenvalues of $\restr{T_1}{\mathcal{W}_{\{2\}}}.  $
		\item If $\mathcal{H}_{\{1,2\}}=0$ and $\mathcal{H}_{\{1\}},\mathcal{H}_{\{2\}}\neq 0$, then $C^{*}(\mathbf{T})\cong \mathfrak{F}_{m,n,\theta}$, where  $m$ and $n$ are the number of distinct eigenvalues of $\restr{T_2}{\mathcal{W}_{\{1\}}}$ and $\restr{T_1}{\mathcal{W}_{\{2\}}}$, respectively.
		\item Suppose $\mathcal{H}_{\{1,2\}}=\mathcal{H}_{\{1\}}=\mathcal{H}_{\{2\}}=0$ and  $\mathcal{H}_{\phi}\neq 0$. Then $C^{*}(\mathbf{T})\cong \mathcal{A}^2_{\theta}$.
	\end{enumerate}	
\end{thm}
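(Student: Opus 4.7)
The plan is to run the Wold--von Neumann decomposition $\mathcal{H} = \bigoplus_{A \subseteq \{1,2\}} \mathcal{H}_A$, identify the $C^*$-algebra generated by $(T_1|_{\mathcal{H}_A}, T_2|_{\mathcal{H}_A})$ on each reducing summand, and then collapse the resulting direct sum using Proposition~\ref{isomdirectsum2}. The organizing principle is that the four candidate algebras form a tower of quotients: $C_{\theta}^{0,2}$ surjects onto $\mathfrak{F}_{m,n,\theta}$ and $\mathfrak{D}_{n,\theta}$, each of which surjects in turn onto $\mathcal{A}_\theta^2$ (the latter two quotients being the short exact sequences $\xi_n^\Lambda$ and $\chi_{m,n}^{\Lambda^1,\Lambda^2}$). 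So whenever a ``larger'' Wold summand is present, the contributions of the smaller ones can be absorbed.

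\textbf{Cases (i) and (v).} On $\mathcal{H}_\phi$ both $T_1, T_2$ are unitary and satisfy $T_2 T_1 = e^{-2\pi \mathrm{i}\theta} T_1 T_2$, giving a representation of the simple algebra $\mathcal{A}_\theta^2$; this settles case~(v), since the representation is nonzero and hence faithful. For case~(i), the canonical surjection $\pi : C_{\theta}^{0,2} \twoheadrightarrow C^*(\mathbf{T})$ sends $(1-s_1 s_1^*)(1-s_2 s_2^*)$ to the orthogonal projection onto $\mathcal{W}_{\{1,2\}}$, which is nonzero by assumption; Corollary~\ref{faithful homomorphism} then forces $\pi$ to be injective, so $C^*(\mathbf{T}) \cong C_{\theta}^{0,2}$.

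\textbf{Cases (ii) and (iii).} The key technical step is to identify $C^*(T_1|_{\mathcal{H}_{\{1\}}}, T_2|_{\mathcal{H}_{\{1\}}})$ with $\mathfrak{D}_{n,\theta}^\Lambda$. The twisted relation implies $T_2 \mathcal{W}_{\{1\}} \subseteq \mathcal{W}_{\{1\}}$ and $T_2^* \mathcal{W}_{\{1\}} \subseteq \mathcal{W}_{\{1\}}$, so finite-dimensionality makes $T_2|_{\mathcal{W}_{\{1\}}}$ a unitary; diagonalize it as $\bigoplus_{i=1}^n \lambda_i I_{E_i}$, with $\lambda_1, \ldots, \lambda_n$ the distinct eigenvalues. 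On $\mathcal{H}_{\{1\}}$, $T_2$ is unitary, so $T_2^* T_1 = e^{2\pi \mathrm{i}\theta} T_1 T_2^*$ rearranges to $T_2 T_1 = e^{-2\pi \mathrm{i}\theta} T_1 T_2$, whence for $w \in E_i$,
\[
T_2\, T_1^{m_1} w \;=\; \lambda_i\, e^{-2\pi \mathrm{i} m_1 \theta}\, T_1^{m_1} w.
\]
Identifying the orbit of a basis vector of $E_i$ with $\ell^2(\mathbb{N}_0)$ via $T_1^{m_1} w \leftrightarrow e_{m_1}$, the pair $(T_1, T_2)$ becomes the model $(S^*, \lambda_i e^{-2\pi \mathrm{i}\theta N})$; summing over $i$ and over multiplicities yields precisely the tuple defining $\mathfrak{D}_{n,\theta}^\Lambda$. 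Proposition~\ref{isomdirectsum1} removes repetitions arising from $\dim E_i > 1$, and Theorem~\ref{homotopy} kills the dependence on $\Lambda$. Case~(iii) is symmetric.

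\textbf{Case (iv) and the gluing obstacle.} The restrictions to $\mathcal{H}_{\{1\}}$ and $\mathcal{H}_{\{2\}}$ fit together into the operators of Definition~\ref{Fmn}, yielding $\mathfrak{F}_{m,n,\theta}^{\Lambda^1,\Lambda^2}$, and Theorem~\ref{homotopy1} removes the parameters. The main obstacle, which occupies the bulk of the proof, is the final gluing step: each case a priori produces a direct-sum representation over all nonzero Wold summands (for example $\mathfrak{D}_{n,\theta} \oplus \mathcal{A}_\theta^2$ in case~(ii) when $\mathcal{H}_\phi$ is also nonzero, or up to four summands in case~(iv)), and one must verify that every ``smaller'' summand factors through the ``largest'' one in the sense of Proposition~\ref{isomdirectsum2}. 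Concretely, after identifying $C^*(T_1|_{\mathcal{H}_\phi}, T_2|_{\mathcal{H}_\phi})$ with $\mathcal{A}_\theta^2$, the representation $s_i \mapsto T_i|_{\mathcal{H}_\phi}$ of $C_{\theta}^{0,2}$ must agree, on generators, with the composition of $s_i \mapsto T_i|_{\mathcal{H}_{\{1\}}}$ and the quotient $\mathfrak{D}_{n,\theta} \twoheadrightarrow \mathcal{A}_\theta^2$; simplicity of $\mathcal{A}_\theta^2$ and the universal property of $C_{\theta}^{0,2}$ force the two resulting $*$-homomorphisms to coincide. Applying Proposition~\ref{isomdirectsum2} then discards the superfluous summands and delivers the stated isomorphism in each case.
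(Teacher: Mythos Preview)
Your proposal is correct and follows essentially the same route as the paper: run the Wold decomposition, identify each summand explicitly with the model operators, remove multiplicities via Proposition~\ref{isomdirectsum1}, and absorb the lower summands via Proposition~\ref{isomdirectsum2}. The one noteworthy variation is in case~(i): the paper instead picks a single basis vector of $\mathcal{W}_{\{1,2\}}$, exhibits the faithful Fock representation of $C_{\theta}^{0,2}$ on its orbit, and then invokes Proposition~\ref{isomdirectsum}; your appeal to Corollary~\ref{faithful homomorphism} is a cleaner shortcut to the same conclusion.
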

\begin{proof}
We split the proof into different cases.
	\begin{enumerate}[(i)]
		\item Let $\mathcal{H}_{\{1,2\}}\neq0$. Let $\{l_{k}\}_{k=1}^{n}$ be an orthonormal basis for $\mathcal{W}_{\{1,2\}}$. Define the following subspace for $1\leq k\leq n$. $$\mathcal{H}_{\{1,2\}}^{k}=\oplus_{n,m}T_{1}^{n}T_{2}^{m}(l_{k}).$$ Thus $\mathcal{H}_{\{1,2\}}=\oplus_{k}\mathcal{H}_{\{1,2\}}^{k}$. Moreover,  the set $\{T_{1}^{m_{1}}T_{2}^{m_{2}}(l_{k}):m_{1},m_{2}\in\mathbb{N}\}$ is an orthonormal basis for $\mathcal{H}_{\{1,2\}}^{k}$. Consider the following map.
		$$\psi:\mathcal{H}_{\{1,2\}}^{k}\rightarrow \ell^{2}(\mathbb{N})\otimes\ell^{2}(\mathbb{N})\,\,;\,\,T_{1}^{n}T_{2}^{m}(l_{k})\mapsto e_{n}\otimes e_{m}.$$
		Hence  $\mathcal{H}_{\{1,2\}}^{k}\cong \ell^{2}(\mathbb{N})\otimes\ell^{2}(\mathbb{N})$ for every $1\leq k\leq n$. With this identification, we have 
		\begin{eqnarray*}
			T_{1}(e_{n}\otimes e_{m})
			&=&T_{1}(T_{1}^{n}T_{2}^{m}(l_{k}))\\
			&=&T_{1}^{n+1}T_{2}^{m}(l_{k})\\
			&=&(e_{n+1}\otimes e_{m})\\
			&=&(S^{*}\otimes 1)(e_{n}\otimes e_{m})
		\end{eqnarray*}
		Similarly, one has 
		\begin{eqnarray*}
			T_{2}(e_{n}\otimes e_{m})
			&=&T_{2}(T_{1}^{n}T_{2}^{m}(l_{k}))\\
			&=&e^{-2\pi\mathrm{i}\theta n}T_{1}^{n}T_{2}^{m+1}(l_{k})\\
			&=&e^{-2\pi\mathrm{i}\theta n}e_{n}\otimes e_{m+1}\\
			&=&(e^{-2\pi\mathrm{i}\theta N}\otimes S^{*})(e_{n}\otimes e_{m})
		\end{eqnarray*}
	This implies that
		\[
		T_{1}|_{\mathcal{H}_{\{1,2\}}}= \left(S^{*}\otimes 1\right)^{\oplus n},  \quad  \mbox{ and }  \quad 	T_{2}|_{\mathcal{H}_{\{1,2\}}}= 	\oplus_{k=1}^n \left(e^{-2\pi\mathrm{i}\theta N}\otimes S^{*}\right)^{\oplus n}.\]
Hence we get
		$$
		T_{1}=	(S^{*} \otimes 1)	 \oplus T_{1}^{\{1\}}  \oplus T_{1}^{\{2\}} \oplus T_{1}^{\phi}  \mbox{ and } 
	T_{2}=	(e^{-2\pi\mathrm{i}\theta N}\otimes S^{*}) \oplus T_{2}^{\{1\}}  \oplus T_{2}^{\{2\}} \oplus T_{2}^{\mathcal{H}_{\phi}}
		$$
		For $A \subset \{1,2\}$, define $C^A=C^*(\{T_1^A,T_2^A\})$.  By the defining relations of $C^{0,2}_{\theta}$, one has a homomorphism $\phi_A: C^{0,2}_{\theta} \rightarrow C^A$ mapping $s_1 \mapsto  T_1^A, \,\,  s_2 \mapsto T_2^A$. Moreover, for $A=\{1,2\}$, the map $\phi_A$ is an isomorphism. 
		By Proposition \ref{isomdirectsum}, the homomorphism 
		\begin{eqnarray*} 
			\psi &:C^{0,2}_{\theta} &\longrightarrow C^{*}(\mathbf{T})\\
			& & s_1 \mapsto T_{1}= (S^{*} \otimes 1)	 \oplus T_{1}^{\{1\}}  \oplus T_{1}^{\{2\}} \oplus T_{1}^{\phi}\\
			& & s_2 \mapsto T_{2}=(e^{-2\pi\mathrm{i}\theta N}\otimes S^{*}) \oplus T_{2}^{\{1\}}  \oplus T_{2}^{\{2\}} \oplus T_{2}^{\mathcal{H}_{\phi}}
		\end{eqnarray*} 
		is an isomorphism. 
		
		\item Let $\alpha_1, \cdots \alpha_n$ be eigenvalues of  $\restr{T_2}{\mathcal{W}_{\{1\}}}$, and $\{l_{r_j}\}_{j=1}^{n_r}$ be an orthonormal basis for eigenspace corresponding to $\alpha_r$. Define
		 $$\mathcal{H}_{\{1\}}^{r,j}=\oplus_{m\in\mathbb{N}}T_{1}^{m}(l_{r_j}) \quad  \mbox{ and }\quad  \mathcal{H}_{\{1\}}^{r}=\oplus_{j=1}^{n_r}\mathcal{H}_{\{1\}}^{r,j}.$$ 
		 Then we have  $\mathcal{H}_{\{1\}}=\oplus_{r=1}^n\mathcal{H}_{\{1\}}^{r}.$  Identifying $T_{1}^{m}(l_{r_j})$ with $e_m$, one can view $\mathcal{H}_{\{1\}}^{r,j}$ as $\ell^2(\bbn)$, and thus 
		$$T_{1}|_{\mathcal{H}_{\{1\}}^{r,j}}=S^{*} \mbox{ and } T_{2}|_{\mathcal{H}_{\{1\}}^{r,j}}=\alpha_r e^{-2\pi\mathrm{i}\theta N}.$$ 
		This implies that 
		\[
		T_{1}|_{\mathcal{H}_{\{1\}}}= \oplus_{r=1}^n\, \oplus_{j=1}^{n_r} S^* \quad \mbox{ and } \quad 	T_{2}|_{\mathcal{H}_{\{1\}}}= \oplus_{r=1}^n\, \oplus_{j=1}^{n_r} \alpha_r e^{-2\pi\mathrm{i}\theta N}.
		\]
		Hence we have 
		\[
		T_1=	T_{1}|_{\mathcal{H}_{\{1\}}}\oplus T_1^{\phi}=\big( \oplus_{r=1}^n\, \oplus_{j=1}^{n_r} S^* \big) \oplus T_1^{\phi} \mbox{ and } T_2=	T_{2}|_{\mathcal{H}_{\{1\}}}\oplus T_2^{\phi}= \big(\oplus_{r=1}^n\, \oplus_{j=1}^{n_r} \alpha_r e^{-2\pi\mathrm{i}\theta N}\big) \oplus T_2^{\phi}.
		\]
		By Proposition (\ref{isomdirectsum1}, \ref{isomdirectsum2}), the $C^*$-algebra  $C^*(\mathbf{T})$ is isomorphic to the $C^*(\{R_1,R_2\})$, where 
		$$R_1=\oplus_{r=1}^n\ S^*  \mbox{ and } R_2= \oplus_{r=1}^n\, \alpha_r e^{-2\pi\mathrm{i}\theta N}.$$ 
		This proves the claim. 
		\item The proof of this case is exactly along the lines of part $(ii)$.
			\item Let $\alpha_1, \cdots \alpha_m$ be eigenvalues of  $\restr{T_2}{\mathcal{W}_{\{1\}}}$, and $\{l_{r_j}\}_{j=1}^{m_r}$ be an orthonormal basis for eigenspace corresponding to $\alpha_r$. Let $\beta_1, \cdots \beta_n$ be eigenvalues of  $\restr{T_1}{\mathcal{W}_{\{2\}}}$, and $\{k_{r_j}\}_{j=1}^{n_r}$ be an orthonormal basis for eigenspace corresponding to $\beta_r$. Define
		$$\mathcal{H}_{\{1\}}^{r,j}=\oplus_{m\in\mathbb{N}}T_{1}^{m}(l_{r_j}) \quad  \mbox{ and }\quad  \mathcal{H}_{\{1\}}^{r}=\oplus_{j=1}^{n_r}\mathcal{H}_{\{1\}}^{r,j}.$$ 
		Then we have  $\mathcal{H}_{\{1\}}=\oplus_{r=1}^n\mathcal{H}_{\{1\}}^{r}.$  Identifying $T_{1}^{m}(l_{r_j})$ with $e_m$, one can view $\mathcal{H}_{\{1\}}^{r,j}$ as $\ell^2(\bbn)$, and thus 
		$$T_{1}|_{(\mathcal{H}_{\{1\}}^{r,j})}=S^{*} \mbox{ and } T_{2}|_{(\mathcal{H}_{\{1\}}^{r,j})}=\alpha_r e^{-2\pi\mathrm{i}\theta N}.$$ 
		This implies that 
		\[
		T_{1}|_{\mathcal{H}_{\{1\}}}= \oplus_{r=1}^n\, \oplus_{j=1}^{n_r} S^* \quad \mbox{ and } \quad 	T_{2}|_{\mathcal{H}_{\{1\}}}= \oplus_{r=1}^n\, \oplus_{j=1}^{n_r} \alpha_r e^{-2\pi\mathrm{i}\theta N}.
		\]
		Hence we have 
		\[
		T_1=	T_{1}|_{\mathcal{H}_{\{1\}}}\oplus T_1^{\phi}=\big( \oplus_{r=1}^n\, \oplus_{j=1}^{n_r} S^* \big) \oplus T_1^{\phi} \mbox{ and } T_2=	T_{2}|_{\mathcal{H}_{\{1\}}}\oplus T_2^{\phi}= \big(\oplus_{r=1}^n\, \oplus_{j=1}^{n_r} \alpha_r e^{-2\pi\mathrm{i}\theta N}\big) \oplus T_2^{\phi}.
		\]
		By Proposition (\ref{isomdirectsum1}, \ref{isomdirectsum2}), the $C^*$-algebra  $C^*(\mathbf{T})$ is isomorphic to the $C^*(\{R_1,R_2\})$, where 
		$$R_1=\oplus_{r=1}^n\ S^*  \mbox{ and } R_2= \oplus_{r=1}^n\, \alpha_r e^{-2\pi\mathrm{i}\theta N}.$$ 
		This proves the claim. 
		\item This follows from the fact that $T_1$ and $T_2$ satisfy the defining relations of $\mathcal{A}^2_{\theta}$, and that $\mathcal{A}^2_{\theta}$ is simple.
	\end{enumerate}	
\end{proof}   
The following theorem says that $K$-theory is a complete invariant for 	a $C^*$-algebra generated by  a pair of  doubly-twisted   isometries  with finite wandering spaces.
\bthm Let $\mathbf{T}=(T_{1},T_{2})$ and   $\mathbf{T}^{\prime}=(T_{1}^{\prime},T_{2}^{\prime})$ be two pairs of  doubly-twisted   isometries with  parameter $\theta\in\mathbb{R}\setminus \bbq$ acting  on a Hilbert space $\clh$. Assuming both $\mathbf{T}$ and $\mathbf{T}^{\prime}$ have finite-dimensional wandering spaces. Then 
$$C^*(\mathbf{T})\cong C^*(\mathbf{T}^{\prime}) \mbox{ if and only if } K_*(C^*(\mathbf{T}))\cong K_*(C^*(\mathbf{T}^{\prime})).$$
\ethm 
\prf This is a straightforward consequence of Theorem \ref{Classification}, Propositions (\ref{General 1}, \ref{General 2}), and Theorem $6.8$ in \cite{Web-2013aa}.
\qed 
\bdfn Let \( \Delta_{\theta} \) be the set of \( C^* \)-algebras generated by doubly twisted isometries \( (T_1, T_2) \leadsto C^{0,2}_{\theta} \), with finite-dimensional wandering spaces, considered up to isomorphism.  
We call a \( C^* \)-algebra \( C \) of \textbf{type I}, \textbf{type II}, \textbf{type III}, or \textbf{type IV} if it is isomorphic to \( C^{0,2}_{\theta} \), \( \mathfrak{D}_{n,\theta} \), \( \mathfrak{F}_{m,n,\theta} \), or \( \mathcal{A}_{\theta}^2 \), respectively.
\edfn 
Let $\mathbf{T}=(T_{1},T_{2})$ be a pair of doubly twisted isometries with respect to a unitary operator $U$ on a Hilbert space $\mathcal{H}$. In the Wold decomposition of $\mathbf{T}$, assume that for every  $A\subseteq\{1,2\}$, the wandering subspace $\mathcal{W}_{A}$ is finite dimensional. 

\begin{lmma}
Under the above set-up,	the spectrum $\sigma(U)$ is a finite set.	
\end{lmma}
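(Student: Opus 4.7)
The plan is to analyze how $U$ acts on each summand of the von Neumann--Wold decomposition of $(T_{1},T_{2})$ and thereby reduce the spectral problem for $U$ to its restriction on the finite dimensional wandering pieces.

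First I would observe that, since $U$ is unitary and commutes with both $T_{1}$ and $T_{2}$ by the doubly twisted assumption, it also commutes with $T_{1}^{*}$ and $T_{2}^{*}$. Consequently $U$ reduces every wandering subspace $\mathcal{W}_{A}=\bigcap_{i\in A}\ker T_{i}^{*}$, each refined wandering subspace $B_{A}\subseteq\mathcal{W}_{A}$, and each summand $\mathcal{H}_{A}$ of the orthogonal decomposition $\mathcal{H}=\bigoplus_{A\subseteq\{1,2\}}\mathcal{H}_{A}$. This is the decisive structural input, and it is immediate from the defining relation (\ref{center}).

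Next, for each nonempty $A\subseteq\{1,2\}$, I would invoke the decomposition $\mathcal{H}_{A}=\bigoplus_{\gamma\in\mathbb{N}^{|A|}}T_{A}^{\gamma}B_{A}$ recalled in Section~2. Since $U$ commutes with each $T_{i}$, the map $T_{A}^{\gamma}:B_{A}\to T_{A}^{\gamma}B_{A}$ is a unitary intertwiner yielding
\[
U|_{T_{A}^{\gamma}B_{A}}=T_{A}^{\gamma}\,(U|_{B_{A}})\,(T_{A}^{\gamma})^{-1},
\]
so $U|_{\mathcal{H}_{A}}$ is a direct sum of unitarily equivalent copies of $U|_{B_{A}}$. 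Therefore $\sigma(U|_{\mathcal{H}_{A}})=\sigma(U|_{B_{A}})$, which is a finite subset of $\mathbb{T}$ because $B_{A}\subseteq\mathcal{W}_{A}$ is finite dimensional and $U|_{B_{A}}$ is a finite dimensional unitary.

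Finally, using the convention $\mathcal{W}_{\emptyset}=\mathcal{H}$ fixed in the preliminaries, the standing hypothesis applied at $A=\emptyset$ forces $\mathcal{H}_{\emptyset}$ itself to be finite dimensional, whence $\sigma(U|_{\mathcal{H}_{\emptyset}})$ is automatically a finite set. Assembling the pieces,
\[
\sigma(U)=\bigcup_{A\subseteq\{1,2\}}\sigma(U|_{\mathcal{H}_{A}})
\]
is a finite union of finite sets, hence finite. The step requiring the most care is the intertwining on each $T_{A}^{\gamma}B_{A}$; this amounts only to unwinding $[U,T_{i}]=0$ together with the observation that $T_{A}^{\gamma}$ restricts to a unitary from $B_{A}$ onto its image, since every $T_{i}$ is an isometry on $\mathcal{H}_{A}$ for $i\in A$.
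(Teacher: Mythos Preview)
Your argument is correct and follows essentially the same route as the paper: reduce $U$ along the Wold decomposition, use that $U$ commutes with the $T_i$ to see that $U|_{\mathcal{H}_A}$ is a direct sum of unitarily equivalent copies of its restriction to the finite-dimensional base, and take the union over $A$. The only cosmetic difference is that you phrase the intertwining via $B_A$ while the paper works directly with $\mathcal{W}_A$; in the present $n=2$ setting with finite-dimensional wandering spaces these coincide, so the arguments are the same.
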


\begin{proof}
	For every $A\subseteq\{1,2\}$, $U|_{\mathcal{W}_{A}}$ is diagonalisable and the spectrum $\sigma(U|_{\mathcal{W}_{A}})$ is finite. 
	By definition of $\mathcal{H}_{A}$, and the map $$ \phi:\mathcal{H}_{A}\longrightarrow \ell^{2}(\mathbb{N})^{\otimes\,|A^{c}|}\otimes\mathcal{W}_{\{1,2\}}\,;\,\overrightarrow{\prod_{i\in A^{c}}}T_{i}^{m_{i}}w \longrightarrow \overrightarrow{\otimes_{i\in A^{c}}}e_{m_{i}}\otimes w.$$ 
	Since $U$ commutes with $T_{1}$ and $T_{2}$,  one has $$U|_{\mathcal{H}_{A}}=U(\overrightarrow{\oplus_{i\in A^{c}}}T_{i}^{m_{i}}\mathcal{W}_{A})=\overrightarrow{\oplus_{i\in A^{c}}}T_{i}^{m_{i}}U(\mathcal{W}_{A}).$$ Therefore we have
	$$U|_{\mathcal{H}_{A}}\cong 1^{\otimes\,|A^{c}|} \otimes U|_{\mathcal{W}_{A}}.$$ This implies that  $\sigma(U|_{\mathcal{H}_{A}})=\sigma(U|_{\mathcal{W}_{A}})$. Therefore $$\sigma(U)=\sigma(\oplus_{A\subseteq\{1,2\}}U^A)=\cup_{A\subseteq\{1,2\}}\sigma(U|_{\mathcal{H}_{A}})=\cup_{A\subseteq\{1,2\}}\sigma(U|_{\mathcal{W}_{A}}).$$ 
	This proves the claim.
\end{proof}

\begin{thm} \label{U-twisted}
	Let $U$ be a unitary operator acting on a Hilbert space $\mathcal{H}$ having finite spectrum $\{e^{2\pi\mathrm{i}\theta_{1}}, \cdots, e^{2\pi\mathrm{i}\theta_{k}}\}$, where $\theta_i \in \bbr \setminus\bbq$ for $1\leq i \leq k$.	Let $\mathbf{T}=(T_{1},T_{2})$ be a pair  of isometries  on  $\mathcal{H}$ such that $\mathbf{T} \leadsto C^{0,2}_U$. Assume that all the wandering subspaces in the Wold decomposition of $\mathbf{T}$ are finite dimensional. Then  $$C^{*}(\mathbf{T}) \cong \oplus_{i=1}^{k}C_{\theta_i},$$ 
	where   $C_{\theta_i} \in \Delta_{\theta_i}$ for all $1 \leq i \leq k$.
\end{thm}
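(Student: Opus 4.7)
The plan is to exploit the fact that $U$ itself belongs to $C^*(\mathbf{T})$ and, having only finitely many points in its spectrum, admits spectral projections onto its eigenspaces that lie in $C^*(\mathbf{T})$ and are central there. This central decomposition splits $C^*(\mathbf{T})$ as a direct sum indexed by the spectrum of $U$, reducing the problem on each summand to a pair of doubly twisted isometries with scalar parameter $\theta_i$, to which Theorem \ref{Classification} applies.

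First I would verify that $U \in C^*(\mathbf{T})$. Taking adjoints in $T_1^*T_2 = U^*T_2T_1^*$ yields $T_2^*T_1 = T_1T_2^*U$; right-multiplying by $T_2$ and using $UT_2 = T_2U$ together with $T_2^*T_2 = 1$ gives $T_2^*T_1T_2 = T_1U$. Left-multiplication by $T_1^*$ together with $T_1^*T_1 = 1$ then yields
\[
U = T_1^*T_2^*T_1T_2 \in C^*(\mathbf{T}).
\]
Since $\sigma(U) = \{e^{2\pi\mathrm{i}\theta_1}, \ldots, e^{2\pi\mathrm{i}\theta_k}\}$ is a finite, hence discrete, subset of $\bbbt$, the indicator function of each singleton is continuous on $\sigma(U)$, and by continuous functional calculus the spectral projection $p_i$ onto the $e^{2\pi\mathrm{i}\theta_i}$-eigenspace $\mathcal{H}_i$ of $U$ lies in $C^*(U) \subseteq C^*(\mathbf{T})$. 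Because $U$ commutes with both $T_1$ and $T_2$, each $p_i$ is central in $C^*(\mathbf{T})$, the projections are mutually orthogonal, and their sum is the identity. Consequently,
\[
C^*(\mathbf{T}) = \bigoplus_{i=1}^{k} p_i\, C^*(\mathbf{T}) = \bigoplus_{i=1}^{k} C^*\!\left(T_1^{(i)}, T_2^{(i)}\right),
\]
where $T_j^{(i)} := T_j|_{\mathcal{H}_i}$.

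On each $\mathcal{H}_i$, $U$ acts as the scalar $e^{2\pi\mathrm{i}\theta_i}$, so $(T_1^{(i)}, T_2^{(i)})$ is a pair of isometries satisfying the doubly twisted relation with scalar parameter $\theta_i \in \bbr \setminus \bbq$, i.e., $\mathbf{T}^{(i)} \leadsto C^{0,2}_{\theta_i}$. Moreover, since each $p_i$ commutes with $T_1$ and $T_2$ (hence with their adjoints), the wandering subspace of the restricted pair on $\mathcal{H}_i$ satisfies
\[
\mathcal{W}_A^{(i)} := \bigcap_{j \in A} \ker\!\left(T_j^{(i)}\right)^* = \mathcal{H}_i \cap \mathcal{W}_A,
\]
which is a subspace of the original finite-dimensional wandering space $\mathcal{W}_A$, and therefore itself finite-dimensional. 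Theorem \ref{Classification} then applies to each $\mathbf{T}^{(i)}$, giving $C^*(T_1^{(i)}, T_2^{(i)}) \in \Delta_{\theta_i}$.

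The main, albeit mild, obstacle is the careful verification that the wandering subspaces of the restricted pair coincide with the intersections $\mathcal{H}_i \cap \mathcal{W}_A$; once this reduction is in place, the conclusion follows by combining the central decomposition with Theorem \ref{Classification}. The algebraic identity $U = T_1^*T_2^*T_1T_2$ is indispensable, since without knowing $U \in C^*(\mathbf{T})$ we could not extract the spectral projections from $C^*(\mathbf{T})$ itself and thus would have no mechanism to split the algebra into pieces indexed by $\sigma(U)$.
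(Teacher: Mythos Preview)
Your proposal is correct and follows essentially the same route as the paper's proof: use the spectral projections of $U$ (obtained by continuous functional calculus from the finite spectrum) as central idempotents in $C^*(\mathbf{T})$ to split the algebra into a direct sum, then observe that on each eigenspace the restricted pair is a $\theta_i$-doubly twisted pair of isometries with finite-dimensional wandering spaces, so the summand lies in $\Delta_{\theta_i}$ by definition. Your explicit verification that $U = T_1^*T_2^*T_1T_2 \in C^*(\mathbf{T})$ and your identification $\mathcal{W}_A^{(i)} = \mathcal{H}_i \cap \mathcal{W}_A$ are points the paper leaves implicit, but the argument is otherwise the same; note that the appeal to Theorem~\ref{Classification} at the end is not strictly needed, since membership in $\Delta_{\theta_i}$ is by definition once the restricted pair has the required properties.
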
	  

\prf 
For each $1\leq i\leq k$, the functions $\chi_{e^{2\pi\mathrm{i}\theta_{i}}}\in C(\sigma(U))$ maps to  orthogonal projections $P_{i}\in C^{*}(\{U\})$ under continuous functional calculus  such that 
$$\sum_{i=1}^{k}P_{i}=1 \mbox{ and } \sum_{i=1}^k\, e^{2\pi\mathrm{i}\theta_{i}}P_i=U.$$  
Moreover, since $U \in \mathcal{Z}(C^*(\mathbf{T}))$, it follows that $P_i \in \mathcal{Z}(C^*(\mathbf{T}))$, and  we have  
 $$a=\sum_{i=1}^{k}P_{i}aP_{i}, \quad  \mbox{ and }  \quad (P_iaP_i)(P_jaP_j)=0 \mbox{ if } i \neq j, $$ 
for all  $a \in  C^*(\mathbf{T})$.
Hence
$$C^*(\mathbf{T})=
C^{*}(\{T_{1},T_{2}\})=C^{*}\Big(\big\{\sum_{i=1}^{k}P_iT_1P_i,\sum_{i=1}^{k}P_iT_2P_i\big\}\Big)=
\oplus_{i=1}^{k}C^{*}(P_{i}T_{1}P_i,P_{i}T_{2}P_i).$$
Note that $$(P_iT_1P_i)^*(P_iT_2P_i)=P_iT_1^*T_2P_i=P_iU^*T_2T_1^*P_i 
=e^{2\pi\mathrm{i}\theta_{i}}(P_iT_2P_i)(P_iT_1^*P_i).$$
Define $\mathbf{T}^i=(P_{j}T_{1}P_i,P_{j}T_{2}P_i)$ for $1\leq i \leq k$. Then $\mathbf{T}^i \leadsto C^{0,2}_{\theta_i}$.  Moreover, all the wandering subspaces of each $\mathbf{T}^i$ are finite-dimensional, since the same holds for $\mathbf{T}$.
Define $C_{\theta_i}=C^{*}(P_{j}T_{1}P_i,P_{j}T_{2}P_i)$ for $1\leq i \leq k$. Then $C_{\theta_i} \in  \Delta_{\theta_i}$ and 
 $C^{*}(\mathbf{T}) \cong \oplus_{i=1}^{k}C_{\theta_i},$  proving  the claim.  
\qed 
\brmrk We call $C_{\theta}$'s the $\theta$-component  of $C^*(\mathbf{T})$ with respect to $U$.  Observe  that $\theta$-component  and  $(\theta+n)$-component are the same for any $n \in \bbz$. 
\ermrk
\bppsn \label{maximal ideal}
Let \( J_{n}^{\mu} = \langle 1 - T_1T_1^* \rangle \). 
Then \( J_{n}^{\mu}= \mathcal{K}^{\oplus n} \). 
If \( \mu \) is irrational, then \( J_{n}^{\mu}\) is the unique maximal ideal of \( \mathfrak{D}_{n,\mu} \).
\eppsn

\prf That  \( J_{n}^{\mu}= \mathcal{K}^{\oplus n}\) follows from the calculations done in the proof of Proposition \ref{short exact sequence 1}. To prove the rest, consider the surjective $*$-homomorphism $\varPsi: C^{0,2}_{\theta} \rightarrow  \mathfrak{D}_{n,\mu}$ sending $s_1 \mapsto T_1$ and $s_2 \mapsto T_2$.  Let $J=\langle 1-s_1s_1^*, 1-s_2s_2^*\rangle$. Then, by Proposition $3.2$ in \cite{Web-2013aa}, it follows that $J$ is the unique maximal ideal of  $C^{0,2}_{\theta}$, and hence $\ker \varPsi\subset J$.  Since $\varPsi(J)=J_{n}^{\mu}$, the claim follows.  
\qed 
\bppsn \label{maximal ideal1}
Let \( J_{m,n}^{\mu} = \langle 1 - T_1T_1^*,\ 1 - T_2T_2^* \rangle \). 
Then \( J_{m,n}^{\mu} = \mathcal{K}^{\oplus (m+n)} \). 
If \( \theta \) is irrational, then \( J_{m,n}^{\mu} \) is the unique maximal ideal of \( \mathfrak{F}_{m,n,\mu} \).
\eppsn
\prf The proof is similar to that of Proposition \ref{maximal ideal}, and is therefore omitted.
\qed

\bppsn \label{plusminus}
 Let $\mu, \nu \in \bbr \setminus \bbq$. Then one has the following.
\begin{enumerate}[(i)]
	\item $\mathfrak{D}_{n,\mu}\cong \mathfrak{D}_{n,\nu} \iff  \mu =\pm \nu \mbox{ mod } \bbz. $
	\item $\mathfrak{F}_{m,n,\mu}\cong\mathfrak{F}_{m,n,\nu} \iff  \mu =\pm \nu \mbox{ mod } \bbz. $
\end{enumerate}
\eppsn 
\prf
To prove the forward implication of part (i), let  
\[
\varpi: \mathfrak{D}_{n,\mu} \rightarrow \mathfrak{D}_{n,\nu}
\]  
be an isomorphism. Then, by Proposition \ref{maximal ideal}, we have \( \varpi(J_{n}^{\mu}) = J_{n}^{\nu} \). This induces an isomorphism between  
\[
\mathfrak{D}_{n,\mu} / J_{n}^{\mu} \cong \mathcal{A}_{\mu}^2 \quad \text{and} \quad \mathfrak{D}_{n,\nu} / J_{n}^{\nu} \cong \mathcal{A}_{\nu}^2,
\]  
and hence we conclude that \( \mu = \pm \nu \mod \mathbb{Z} \).
For the converse, note that  
\[
\mathfrak{D}_{n,\mu} = C^*(\{T_1, T_2\}) = C^*(\{T_1, T_2^*\}) = \mathfrak{D}_{n,-\mu}.
\]

For part (ii), we will prove the necessary condition, as the sufficient one follows as in the previous case.  
For that, let \( \mathbf{T}^{+} = (T_1^+, T_2^+) \) and \( \mathbf{T}^{-} = (T_1^-, T_2^-) \) denote the corresponding generators as given in Definition \ref{Fmn}. It is straightforward to verify that all wandering data of \( \mathbf{T}^{+} \) and \( \mathbf{T}^{-} \) are the same. Hence, by Theorem 5.2 in \cite{RakSarSur-2022aa}, the tuples \( \mathbf{T}^{+} = (T_1^+, T_2^+) \) and \( \mathbf{T}^{-} = (T_1^-, T_2^-) \) are unitarily equivalent, and thus  
\[
\mathfrak{F}_{m,n,\mu} = C^*(\mathbf{T}^{+}) \cong C^*(\mathbf{T}^{-}) \cong \mathfrak{F}_{m,n,-\mu}.
\]
\qed
 \brmrk \label{plusminus1}
By Proposition 3.6 in \cite{Web-2013aa}, the above result holds for \( C^{0,2}_{\theta} \). The same result holds for \( \mathcal{A}_{\theta}^2 \).  
\ermrk

\blmma \label{plusminus2}
  Let $\mu, \nu \in \bbr \setminus \bbq$. Let $B_{\mu}$ and $C_{\nu}$ be two $C^*$-algebras such that  $B_{\mu} \in  \Delta_{\mu}$ and $C_{\nu} \in  \Delta_{\nu}$. If 
$ B_{\mu} \cong C_{\nu} $, then $ \mu =\pm \nu \mbox{ mod } \bbz. $ 
\elmma 
\prf By comparing the \( K \)-groups, one concludes that both \( B_{\mu} \) and \( C_{\nu} \) are of the same type. The claim then follows directly from Proposition \ref{plusminus} and Remark \ref{plusminus1}.  
\qed

		\bthm \label{U-twisted 1}
	Let $U$ be a unitary operator acting on a Hilbert space $\mathcal{H}$ with finite spectrum $\{e^{2\pi\mathrm{i}\theta_{1}}, \ldots, e^{2\pi\mathrm{i}\theta_{k}}\}$, where $\theta_i \in \bbr \setminus \bbq$ for $1 \leq i \leq k$. Let $\mathbf{T} = (T_{1}, T_{2})$ and $\mathbf{T}^{\prime} = (T_{1}^{\prime}, T_{2}^{\prime})$ be two pairs of isometries on $\mathcal{H}$ such that
$	\mathbf{T}, \mathbf{T}^{\prime} \leadsto ^*C^{0,2}_U $. 
	Assume that for \( 1 \leq i \leq k \), the algebras \( C_{\theta_i} \) and \( C'_{\theta_i} \) are the \( \theta_i \)-components of \( C^*(T) \) and \( C^*(T') \), respectively, with respect to \( U \). Then the following holds:
	\begin{enumerate}[(i)]
		\item If \( e^{2\pi i \theta_i} \in \sigma(U) \) but \( e^{-2\pi i \theta_i} \notin \sigma(U) \), then \( C_{\theta_i} \cong C'_{\theta_i} \).
		\item If both \( e^{2\pi i \theta_i}, e^{-2\pi i \theta_i} \in \sigma(U) \), then either \( C_{\theta_i} \cong C'_{\theta_i} \) or \( C_{\theta_i} \cong C'_{-\theta_i} \).
	\end{enumerate}
	\ethm 
\prf It suffices to prove the forward implication, thanks to Theorem \ref{U-twisted}. Let 
$$ \phi: C^*(\mathbf{T}) \rightarrow   C^*(\mathbf{T}^{\prime}) $$
be an isomorphism. Let $\phi(U) = U^{0}$. Then we have 
$$\sigma(U)=\sigma(U^0) \mbox{  and } UU^{0}=U^{0}U \mbox{  as } U, U^{0}\in \mathcal{Z}(C^*(\mathbf{T}^{\prime})).$$ 
Observe that  $\mathbf{T}^{\prime}$ is a $U^{0}$-doubly twisted tuple of isometries with finite dimensional wandering spaces. Let $C^0_{\theta_i}$ be the $\theta_i$-component of $C^*(\mathbf{T}^{\prime})$ with respect to $U^0$.  Let $P_i$ and $P_i^0$ be the orthogonal projection onto the eigenspace of $U$ and $U^0$, respectively, corresponding to eigenvalue $e^{2\pi i \theta_i}$. Then $\phi(P_i) = P_i^0$ for all $1 \leq i \leq k$. Hence we have 
$$ C^0_{\theta_i}= P_i^0C^*(\mathbf{T}^{\prime})P_i^0
=\phi(P_i)\phi\left( C^*(\mathbf{T})\right)\phi(P_i)= \phi\left(P_iC^*(\mathbf{T})P_i\right) = \phi(C_{\theta_i}).$$
Further, for each $1\leq i \leq k$, there exists a unique  $1 \leq j \leq k$ such that $P_i^0=P_j$, as $U$ and $U^0$ commute, therefore, we get
$$C^0_{\theta_i}= P_i^0C^*(\mathbf{T}^{\prime})P_i^0= P_jC^*(\mathbf{T}^{\prime})P_j= C^{\prime}_{\theta_j}.$$
This proves that $\phi(C_{\theta_i}) = C^{\prime}_{\theta_j}$, and hence $C_{\theta_i} \cong C_{\theta_j}^{\prime}$. By Lemma \ref{plusminus2}, we get $\theta_j=\pm \theta_i$, proving the claim. 
\qed 
\brmrk
For a unitary operator \( U \) with finite spectrum, Theorems \ref{U-twisted} and \ref{U-twisted 1}, along with Lemma \ref{plusminus} and Remark \ref{plusminus1}, provide a complete classification of the \( C^* \)-algebras generated by a pair of \( U \)-doubly twisted isometries with finite-dimensional wandering spaces.
\ermrk
	\noindent\begin{footnotesize}\textbf{Acknowledgement}: 
Bipul Saurabh gratefully acknowledges support from the   NBHM grant 02011/19/2021/NBHM(R.P)/R\&D II/8758.
\end{footnotesize}

		\bigskip
	\noindent{\sc Shreema  Subhash Bhatt} (\texttt{shreemab@iitgn.ac.in},  \texttt{shreemabhatt3@gmail.com})\\
	{\footnotesize Department of Mathematics,\\ Indian Institute of Technology, Gandhinagar,\\  Palaj, Gandhinagar 382055, India}
	\bigskip
	
	\noindent{\sc Surajit Biswas}  (\texttt{surajitb@iiitd.ac.in},
	\texttt{241surajit@gmail.com})\\
	{\footnotesize Department of Mathematics,\\ Indraprastha Institute of Information Technology Delhi,\\ Okhla Industrial Estate, Phase III, New Delhi, Delhi 110020, India}\\

	\noindent{\sc Bipul Saurabh} (\texttt{bipul.saurabh@iitgn.ac.in},  \texttt{saurabhbipul2@gmail.com})\\
	{\footnotesize Department of Mathematics,\\ Indian Institute of Technology, Gandhinagar,\\  Palaj, Gandhinagar 382055, India}

\end{document}